\begin{document}
    \theoremstyle{plain}
    \newtheorem{thm}{Theorem}[section]
    \newtheorem{theorem}[thm]{Theorem}
    \newtheorem{lemma}[thm]{Lemma}
    \newtheorem{corollary}[thm]{Corollary}
    \newtheorem{corollary*}[thm]{Corollary*}
    \newtheorem{proposition}[thm]{Proposition}
    \newtheorem{proposition*}[thm]{Proposition*}
    \newtheorem{conjecture}[thm]{Conjecture}
    \theoremstyle{definition}
    \newtheorem{construction}[thm]{Construction}
    \newtheorem{notations}[thm]{Notations}
    \newtheorem{question}[thm]{Question}
    \newtheorem{problem}[thm]{Problem}
    \newtheorem{remark}[thm]{Remark}
    \newtheorem{remarks}[thm]{Remarks}
    \newtheorem{definition}[thm]{Definition}
    \newtheorem{claim}[thm]{Claim}
    \newtheorem{assumption}[thm]{Assumption}
    \newtheorem{assumptions}[thm]{Assumptions}
    \newtheorem{properties}[thm]{Properties}
    \newtheorem{example}[thm]{Example}
    \newtheorem{comments}[thm]{Comments}
    \newtheorem{blank}[thm]{}
    \newtheorem{observation}[thm]{Observation}
    \newtheorem{defn-thm}[thm]{Definition-Theorem}
        \newcommand{\Rmnum}[1]{\uppercase\expandafter{\romannumeral #1}}
        \newcommand{\rmnum}[1]{\romannumeral #1}

\def\vol{\operatorname{vol}}


    \title[Super higher Weil-Petersson volumes]{Higher Weil-Petersson volumes of the moduli space of super Riemann surfaces}
    \author{Xuanyu Huang}
    \address{Center of Mathematical Sciences, Zhejiang University, Hangzhou, Zhejiang 310027, China}
    \email{Hxuanyu98@gmail.com}

\author{Kefeng Liu}
        \address{Mathematical Science Research Center, Chongqing University of Technology, Chongqing, 400054, China}
        \email{kefengliu@cqut.edu.cn}

        \author{Hao Xu}
     \address{Mathematical Science Research Center, Chongqing University of Technology, Chongqing, 400054, China}
      \email{mathxuhao@gmail.com}

    \maketitle
\begin{abstract}

Inspired by the theory of JT supergravity, Stanford-Witten derived a
remarkable recursion formula of Weil-Petersson volumes of moduli
space of super Riemann surfaces. It is the super version of the
celebrated Mirzakhani's recursion formula. In this paper, we
generalize Stanford-Witten's formula to include high degree kappa
classes.
\end{abstract}

\section{Introduction}
Two-dimensional quantum gravity has been studied in two different
approaches, one is by matrix models which obeys a particular
integrable hierarchy and the other is the intersection theory on the
moduli space of curves. It was conjectured by Witten
\cite{witten1990two} that these two approaches should be equivalent,
thus the generating function of the intersection numbers of certain
natural cohomology class on the moduli is governed by KdV hierarchy
or equivalently by Virasoro constraints \cite{dijkgraaf1991loop}.
Witten's conjecture was first proved by Kontsevich
\cite{kontsevich1992intersection}. The celebrated Witten-Kontsevich
theorem gives a surprising connection between intersection theory on
the moduli space of curves and integrable system.

Jackiw-Teitelboim (JT) gravity is a simple model of two-dimensional
quantum gravity coupled to the dilaton field. Its recent
applications in SYK model generated a resurgence of interest in JT
gravity, whose partition function is related to the Weil-Petersson
volumes of moduli spaces of Riemann surfaces.

Similarly, as pointed out by Stanford and Witten
\cite{stanford2020jt}, the partition function of JT supergravity is
related to the super Weil-Petersson volumes, i.e. the Weil-Petersson
volumes of moduli spaces of super Riemann surfaces.

Denote by $\hat{V}_{g,n}(L_1,...,L_n)$ the Weil-Petersson volume of
the moduli space of super Riemann surfaces with boundary components
of lengths $L_1,...,L_n$. By definition (see
\cite{stanford2020jt,norbury2020enumerative}), it is the integration
of the Euler form of a holomorphic vector bundle $E_{g,n}^{\vee}$
combined with the Weil-Petersson symplectic form over the moduli
space $\mathcal{M}_{g,n,\vec{o}}^{\rm spin}(L_1,...,L_n)$ of spin hyperbolic
surfaces with boundary geodesics of lengths $L_1,...,L_n$. Here
$\vec{o}=(0,0,...,0)$ represents that all boundary components are
Neveu-Schwarz. That is,
$$\hat{V}_{g,n}(L_1,...,L_n)=\int_{\mathcal{M}_{g,n,\vec{o}}^{\rm spin}(L_1,...,L_n)}e(E_{g,n}^{\vee})\mathrm{exp}(
\omega^{WP}).$$

In a series of innovative papers, Norbury
\cite{norbury2023new,norbury2020enumerative} constructed a
cohomology class $\Theta_{g,n}$ on $\overline{\mathcal{M}}_{g,n}$
and showed that the super Weil-Petersson volume
$\hat{V}_{g,n}(L_1,...,L_n)$ is reduced to the following integral on
ordinary moduli space.
\begin{equation}\label{VTheta}
V^{\Theta}_{g,n}(L_1,...,L_n):=\int_{\overline{\mathcal{M}}_{g,n}}\Theta_{g,n}\exp\left(2\pi^2\kappa_1+\frac{1}{2}\sum_{i=1}^n
L_i^2\psi_i\right).
\end{equation}

Below is Norbury's formula.
\begin{theorem}[Norbury \cite{norbury2020enumerative}]\label{thmNorbury}
\begin{equation}\label{eqNorbury}
\hat{V}_{g,n}(L_1,...,L_n)=2^{1-g-n}V^{\Theta}_{g,n}(L_1,...,L_n).
\end{equation}
\end{theorem}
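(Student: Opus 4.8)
My plan is to push the integrand defining $\hat{V}_{g,n}$ down from the spin moduli to $\overline{\mathcal{M}}_{g,n}$ along the map $p$ that forgets the spin structure, treating the two factors $e(E_{g,n}^\vee)$ and $\exp(\omega^{WP})$ separately. The symplectic factor is the easy one. Since the hyperbolic metric on a bordered surface does not see the choice of spin structure, $\omega^{WP}$ on $\mathcal{M}_{g,n,\vec{o}}^{\rm spin}(L_1,\dots,L_n)$ is literally the pullback $p^*\omega^{WP}$ of the Weil--Petersson form on the ordinary hyperbolic moduli; by the Mirzakhani--Wolpert formula its class on the Deligne--Mumford compactification is $2\pi^2\kappa_1+\frac12\sum_i L_i^2\psi_i$. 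Hence, after passing to the algebraic model, $\exp(\omega^{WP})$ on the spin side is $p^*$ of the exponential factor occurring in $V^{\Theta}_{g,n}$.

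The substance is the Euler factor. The bundle $E_{g,n}$ arises from the odd directions of the super moduli space and is built from the cohomology of the theta characteristic $\theta$, $\theta^{\otimes 2}\cong\omega$, so it has rank $2g-2+n$. I would realise it algebraically over the compactified spin moduli $\overline{\mathcal{M}}^{\rm spin}_{g,n}$ as (a twist of) the derived pushforward $R^\bullet\pi_*\theta$ along the universal curve, so that $e(E_{g,n}^\vee)$ equals its top Chern class up to the sign $(-1)^{2g-2+n}$, and is a genuine class on the spin moduli rather than a pullback from below. Applying the projection formula for $p\colon \overline{\mathcal{M}}^{\rm spin}_{g,n}\to\overline{\mathcal{M}}_{g,n}$,
\[
\int_{\overline{\mathcal{M}}^{\rm spin}_{g,n}} e(E_{g,n}^\vee)\,p^*\Omega=\int_{\overline{\mathcal{M}}_{g,n}} p_*e(E_{g,n}^\vee)\,\Omega,\qquad \Omega=\exp\!\Bigl(2\pi^2\kappa_1+\tfrac12\textstyle\sum_i L_i^2\psi_i\Bigr),
\]
reduces the theorem to the identity $p_*e(E_{g,n}^\vee)=2^{1-g-n}\,\Theta_{g,n}$ of cohomology classes on $\overline{\mathcal{M}}_{g,n}$.

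To establish this identity, and in particular to produce the constant $2^{1-g-n}$, I would compute the pushforward $p_*e(E_{g,n}^\vee)$ by Grothendieck--Riemann--Roch together with the Chiodo formula for the Chern character of $R^\bullet\pi_*\theta$, which is exactly the input used to define $\Theta_{g,n}$. The power of two is then assembled from three sources: the generic order-two automorphism $\pm1$ acting on every theta characteristic, which makes $\overline{\mathcal{M}}^{\rm spin}_{g,n}$ a $\mathbb{Z}/2$-gerbe and inserts a factor $\tfrac12$; the number of spin structures carried by the fibres of $p$; and the local contributions at the $n$ Neveu--Schwarz points, where the prescribed square-root structure fixes a further normalisation. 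Matching these against Norbury's chosen normalisation of $\Theta_{g,n}$ yields $2^{1-g-n}=2^{-(g-1+n)}$.

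The step I expect to be hardest is controlling everything over the compactification. One must extend $E_{g,n}^\vee$ and the Euler-class identity across the boundary divisors of $\overline{\mathcal{M}}^{\rm spin}_{g,n}$, handle the degeneration of the theta characteristic at the nodes --- where a Neveu--Schwarz node forces a definite local model for $\theta$ and its cohomology --- and check that $p_*e(E_{g,n}^\vee)$ is genuinely the class $\Theta_{g,n}$ there, including the delicate tracking of powers of two through the boundary recursion. Once this algebraic identification and the constant are secured, the symplectic matching of the first paragraph and the projection formula finish the proof.
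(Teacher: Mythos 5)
First, a point of comparison: the paper does not prove this statement at all --- Theorem \ref{thmNorbury} is imported as a black box from Norbury \cite{norbury2020enumerative}, where $\hat V_{g,n}$ is taken by definition to be $\int_{\mathcal{M}^{\rm spin}_{g,n,\vec o}(L)}e(E_{g,n}^\vee)\exp(\omega^{WP})$ and the content is the passage to the integral \eqref{VTheta} over $\overline{\mathcal{M}}_{g,n}$. Your outline does follow the route of Norbury's actual argument: pull the symplectic factor back along the map $p$ forgetting the spin structure, identify its class on the compactification via Mirzakhani--Wolpert, and reduce by the projection formula to the pushforward identity $p_*e(E_{g,n}^\vee)=2^{1-g-n}\Theta_{g,n}$. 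So the skeleton is right.

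However, as written this is a roadmap rather than a proof, and the gaps are exactly where the mathematical content lies. (1) The reduction from the integral of differential forms over the \emph{noncompact} hyperbolic moduli $\mathcal{M}^{\rm spin}_{g,n,\vec o}(L)$ to an intersection number on the Deligne--Mumford compactification is not a formality: one needs Wolpert/Mirzakhani-type extension results for $\omega^{WP}$ on the spin cover, an algebraic extension of $E_{g,n}$ across the boundary, and the statement that the Euler \emph{form} integrates to the top Chern class of that extension with no boundary corrections. You flag this as ``the hardest step'' but do not carry it out, and the theorem is false without it. (2) Your plan to \emph{derive} the constant $2^{1-g-n}$ from GRR/Chiodo plus gerbe and Neveu--Schwarz contributions is misdirected: in Norbury's construction $\Theta_{g,n}$ is \emph{defined} (up to the axiomatic characterization (i)--(iii) quoted in the paper) as $2^{g-1+n}p_*c_{2g-2+n}(E_{g,n}^\vee)$, so the identity you aim to prove is essentially the normalization built into the definition; what actually requires proof is that this class satisfies the listed axioms (e.g.\ $\Theta_{1,1}=3\psi_1$), which your argument never touches. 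GRR/Chiodo computes the Chern character of $R^\bullet\pi_*\theta$ but does not by itself produce the constant. (3) A smaller slip: the rank $2g-2+n$ of $E_{g,n}$ does not follow from $\theta^{\otimes2}\cong\omega$ (that would give $\chi(\theta)=0$); the bundle is built from a twisted (log) theta characteristic, and getting this wrong would derail the Chiodo computation you propose.
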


Now we explain the symbols in \eqref{VTheta}. Denote by
$\overline{\mathcal{M}}_{g,n}$ the moduli space of stable
$n$-pointed genus $g$ Riemann surfaces. We have the morphism that
forgets the last marked point,
$$\pi: \overline{\mathcal{M}}_{g,n+1}\to\overline{\mathcal{M}}_{g,n},$$ and the gluing
maps which glue the last two points, $$\phi_{irr}:
\overline{\mathcal{M}}_{g-1,n+2}\to\overline{\mathcal{M}}_{g,n},$$
$$\phi_{h,I}:\overline{\mathcal{M}}_{h,|I|+1}\times\overline{\mathcal{M}}_{g-h,|J|+1}\to\overline{\mathcal{M}}_{g,n},\qquad
I\sqcup J=\{1,...,n\}.$$ Denote by $\sigma_1,...,\sigma_n$ the
canonical sections of $\pi$, and by $D_1,...,D_n$ the corresponding
divisors in $\overline{\mathcal{M}}_{g,n+1}$. Let $\omega_{\pi}$ be the
relative dualizing sheaf, we have the following tautological classes
on moduli spaces of curves.
$$\psi_i=c_1(\sigma_i^{\ast}(\omega_{\pi})),$$ $$\kappa_i=\pi_{\ast}(c_1(\omega_{\pi}(\sum D_i))^{i+1}).$$
$\Theta$ class is constructed by Norbury\cite{norbury2023new}. It follows that $\Theta_{g,n}\in \mathit{H}^{4g-4+2n}(\overline{\mathcal{M}}_{g,n},\mathbb{Q})$ for $g\geq 0$, $n\geq 0$ and $2g-2+n>0$ such that \\
(\rmnum{1}) $\phi_{irr}^{\ast}\Theta_{g,n}=\Theta_{g-1,n+2}$ and $ \phi_{h,\mathbf{I}}^{\ast}\Theta_{g,n}=\pi_1^{\ast}\Theta_{h,|I|+1}\cdot\pi_2^{\ast}\Theta_{g-h,|J|+1}$,\\
(\rmnum{2})$\Theta_{g,n+1}=\psi_{n+1}\cdot\pi^{\ast}\Theta_{g,n}$,\\ (\rmnum{3}) $\Theta_{1,1}=3\psi_1$ and $\Theta_{0,n}=0$, (Since $\mathrm{deg}\ \Theta_{0,n}=n-2>n-3=\mathrm{dim}(\overline{\mathcal{M}}_{g,n})$)\\where $\pi_i$
is projection onto the $i$-th factor of $\overline{\mathcal{M}}_{h,|I|+1}\times\overline{\mathcal{M}}_{g-h,|J|+1}$.

We are interested in the following intersection numbers, called
super higher Weil-Petersson volumes
$$\langle\kappa(\mathbf{b})\prod\limits_{i=1}\limits^{n}\tau_{d_i}\rangle_{g}^{\Theta}:=\int_{\overline{\mathcal{M}}_{g,n}}\Theta_{g,n}\prod\limits_{i\geq
1}\kappa_{i}^{b(i)}\prod\limits_{i=1}\limits^n \psi_i^{d_i},$$ where
$\sum_{i\geq 1} ib(i)+\sum_{i=1}^{n}d_i=g-1.$ Note that only when
$g\geq 2$, $\kappa(\mathbf{b})$ may not be empty. When
$d_1=\cdots=d_n=0$, denote
$$\setlength{\abovedisplayskip}{3pt}
\setlength{\belowdisplayskip}{3pt}V_{g,n}^{\Theta}(\kappa(\mathbf{b}))=\langle\tau_0^{n}\kappa(\mathbf{b})
\rangle_g^{\Theta}=\int_{\overline{\mathcal{M}}_{g,n}}\Theta_{g,n}\kappa(\mathbf{b}).$$
In particular, $V_{g,0}^{\Theta}(\kappa(\mathbf{b}))$ is simply
denoted by $V_{g}^{\Theta}(\kappa(\mathbf{b}))$.

By adapting the techniques of Mirzakhani
\cite{mirzakhani2007simple}, Stanford and Witten
\cite{stanford2020jt} obtained a super McShane-Mirzakhani identity
via supergeometry and then integrated it to derive a recursion
formula for super Weil-Petersson volumes
$\hat{V}_{g,n}(L_1,...,L_n)$. See \eqref{Volumerecursion} in the
appendix. A detailed treatment of super McShane identity in genus
one was given in \cite{huang2023super}.

From Theorem \ref{thmNorbury}, Norbury \cite{norbury2020enumerative}
gave a new proof of Stanford-Witten formula via topological
recursion and spectral curve.

From \eqref{VTheta} and \eqref{eqNorbury}, Stanford-Witten's formula
is equivalent to the following recursion formula of intersection
numbers.
\begin{equation}\label{Recursionkappsi}
\begin{aligned}&(2d_1+1)!!\langle\kappa_{1}^{a}\prod\limits_{i=1}\limits^{n}\tau_{d_i} \rangle^{\Theta}_g\\&=\sum\limits_{j=2}\limits^{n}\sum\limits_{b=0}\limits^{a}\frac{a!}{(a-b)!}\frac{(2b+2d_1+2d_j+1)!!}{(2d_j-1)!!}\beta_b\langle\kappa_{1}^{a-b}\tau_{b+d_1+d_j}\prod\limits_{i\neq 1,j}\tau_{d_i} \rangle^{\Theta}_g\\&+\frac{1}{2}\sum\limits_{b=0}\limits^{a}\underset{r+s=b+d_1-1}{\sum}\frac{a!}{(a-b)!}(2r+1)!!(2s+1)!!\beta_b\langle\kappa_{1}^{a-b}\tau_r\tau_s\prod\limits_{i=2}\limits^{n}\tau_{k_i} \rangle^{\Theta}_{g-1}\\&+\frac{1}{2}\sum\limits_{b=0}\limits^{a}\sum_{\substack{c+c^{'}=a-b\\[3pt]r+s=b+k_1-1}}\sum_{\substack{g_1+g_2=g\\[3pt]I\sqcup J=\{2,...,n\}}}\frac{a!}{c!c^{'}!}(2r+1)!!(2s+1)!!\beta_{b}\langle\kappa_{1}^{c}\tau_{r}\tau_{d(I)} \rangle^{\Theta}_{g_1}\langle\kappa_{1}^{c^{'}}\tau_{s}\tau_{d(J)} \rangle^{\Theta}_{g_2},\end{aligned}\end{equation} where $\tau_{d(I)}=\prod\limits_{i\in I}\tau_{d_i}$ and $\beta_b$ is defined by \begin{equation}\label{beta_b}\frac{1}{cos(\sqrt{2}x)}=\sum\limits_{b=0}\limits^{\infty}\beta_bx^{2b}.\end{equation}

We remark that \eqref{Recursionkappsi} is also mentioned by Norbury
\cite{norbury2020enumerative} but was not written down explicitly.
For completeness, we give detailed derivation in the appendix (see
the proof of Theorem A.3).

Now we fix notations. Consider the semigroup $N^{\infty}$ of sequences $\mathbf{m}=(m(1),m(2),\\m(3),...)$, where $m(i)$ are nonnegative integers and $m(i)=0$ for sufficiently large $i$. Denote by $\bm{\delta}_{a}$ the sequence with $1$ at the a-th place and zeros else where. Let $\mathbf{m},\mathbf{t},\mathbf{a_1},...,\mathbf{a_n}\in N^{\infty}$, $\mathbf{t}=\sum\limits_{i=1}\limits^{n}\mathbf{a_i}$ and $\mathbf{s}:=(s_1,s_2,...)$ be a family of independent formal variables.
$$|\mathbf{m}|:=\sum_{i\geq 1}im(i),\qquad ||\mathbf{m}||:=\sum_{i\geq 1}m(i),\qquad \mathbf{s}^{\mathbf{m}}:=\prod_{i\geq 1}s_{i}^{m(i)},\qquad \mathbf{m}!:=\prod_{i\geq 1}m(i)!,$$
$$\binom{\mathbf{m}}{\mathbf{t}}:=\prod\limits_{i\geq}\binom{m(i)}{t(i)},\qquad \binom{\mathbf{m}}{\mathbf{a_1},...,\mathbf{a_n}}:=\prod\limits_{i\geq 1}\binom{m(i)}{a_1(i),...,a_n(i)}.$$ Let $\mathbf{b}\in\mathbb{N}^{\infty}$, we denote a formal monomial of $\kappa$ classes by $$\kappa(\mathbf{b}):=\prod\limits_{i\geq 1}\kappa_{i}^{b(i)},$$ and $\mathbf{0}:=(0,0,...)\in N^{\infty}$ as the zero element of $N^{\infty}$.

The Witten-kontsevich theorem asserts that the generating function
for integrals of $\psi$ classes on $\overline{\mathcal{M}}_{g,n}$
\begin{equation}\label{Witten-Kontsevich}Z^{KW}(\hbar,t_0,t_1,...):=\mathrm{exp}\underset{g,n,\vec{d}}{\sum}\frac{\hbar^{g-1}}{n!}\langle\prod\limits_{i=1}\limits^{n}\tau_{d_i} \rangle_{g} \prod\limits_{i=1}\limits^{n}t_{d_i},\end{equation}
is a tau function of the KdV hierachy. Here, we also assemble the
intersection numbers involving $\Theta$ and $\psi$ classes in the
following generating fuction:
\begin{equation}\label{ZTheta}Z^{\Theta}(\hbar,t_0,t_1,...):=\mathrm{exp}\underset{g,n,\vec{d}}{\sum}\frac{\hbar^{g-1}}{n!}\langle\prod\limits_{i=1}\limits^{n}\tau_{d_i} \rangle_{g}^\Theta \prod\limits_{i=1}\limits^{n}t_{d_i}.\end{equation}
Denote by $Z^{BGW}$ the Br{\'e}zin-Gross-Witten tau function of the
KdV hierarchy which is annihilated by Virasoro constraints.
\begin{theorem}\textup{\cite{chidambaram2022relations,norbury2023new}}\label{BGW-Theta} $Z^{\Theta}(\hbar,t_0,t_1,...)$ coincides with the Br{\'e}zin-Gross-Witten tau function of KdV hierarchy. That is
\begin{equation}\label{BGW}
Z^{\Theta}(\hbar,t_0,t_1,...)=Z^{BGW}(\hbar,t_0,t_1,...).
\end{equation}
\end{theorem}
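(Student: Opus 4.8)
The plan is to prove the identity \eqref{BGW} coefficient by coefficient, by showing that the numbers $\langle\prod_{i}\tau_{d_i}\rangle_g^{\Theta}$ obey exactly the Virasoro recursion that characterizes the Br\'ezin--Gross--Witten correlators, with matching initial data. First I would specialize the recursion \eqref{Recursionkappsi} to the case $a=0$; since $\beta_0=1$ by \eqref{beta_b}, it collapses to
\[
(2d_1+1)!!\,\langle\tau_{d_1}\prod_{i\geq 2}\tau_{d_i}\rangle_g^{\Theta}
=\sum_{j=2}^{n}\frac{(2d_1+2d_j+1)!!}{(2d_j-1)!!}\,\langle\tau_{d_1+d_j}\prod_{i\neq 1,j}\tau_{d_i}\rangle_g^{\Theta}
+\frac12\sum_{r+s=d_1-1}(2r+1)!!(2s+1)!!\,Q_{r,s},
\]
where $Q_{r,s}=\langle\tau_r\tau_s\prod_{i\geq 2}\tau_{d_i}\rangle_{g-1}^{\Theta}+\sum_{g_1+g_2=g,\,I\sqcup J=\{2,\dots,n\}}\langle\tau_r\tau_{d(I)}\rangle_{g_1}^{\Theta}\langle\tau_s\tau_{d(J)}\rangle_{g_2}^{\Theta}$ gathers the non-separating and separating contributions.

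Next I would recognize this as the coefficient form of the BGW Virasoro constraints $L_n^{\mathrm{BGW}}Z^{\Theta}=0$ for $n\geq 0$, reading off the constraint indexed by $n=d_1$. Two structural features confirm the identification and distinguish it from the Witten--Kontsevich (DVV) recursion: the merging sum produces the index $d_1+d_j$ rather than $d_1+d_j-1$, and the splitting sums run over $r+s=d_1-1$ rather than $d_1-2$. This uniform shift by one is precisely the dilaton shift at $t_0$ defining the BGW model, and it is forced here by the second defining property of the $\Theta$ class, $\Theta_{g,n+1}=\psi_{n+1}\,\pi^{\ast}\Theta_{g,n}$, which raises every $\psi$-degree by one relative to the $\Theta$-free theory. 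Multiplying the recursion by $\prod t_{d_i}$ and repackaging as in \eqref{ZTheta} turns it into the operator identity $L_n^{\mathrm{BGW}}Z^{\Theta}=0$, in which the half-integer weights $(2k+2n+1)!!/(2k-1)!!$, the quadratic part with coefficients $(2r+1)!!(2s+1)!!$, and the anomaly constant $\tfrac{1}{16}\delta_{n,0}$ in $L_0$ all appear.

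Finally I would invoke uniqueness. The recursion is well-founded: each term on the right-hand side strictly decreases $2g-2+n$ (the merging term lowers $n$ by one, the non-separating term trades one genus unit for one point, and the separating term breaks the surface into two pieces of smaller complexity), and for $d_1=0$ only the merging term survives. Hence the recursion together with the base cases determines all $\Theta$-correlators uniquely. The base cases come from the third defining property: $\Theta_{0,n}=0$ kills the entire genus-zero sector, while $\Theta_{1,1}=3\psi_1$ gives the unique datum not produced by the recursion, $\langle\tau_0\rangle_1^{\Theta}=\int_{\overline{\mathcal{M}}_{1,1}}3\psi_1=\tfrac18$ (this is exactly the value encoded by the $\tfrac1{16}$ anomaly in $L_0$). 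Since the BGW correlators are known to satisfy the same Virasoro constraints, equivalently the same recursion, with the same genus-zero vanishing and the same normalization $\langle\tau_0\rangle_1^{\mathrm{BGW}}=\tfrac18$, a simultaneous induction on $2g-2+n$ forces $\langle\prod_i\tau_{d_i}\rangle_g^{\Theta}=\langle\prod_i\tau_{d_i}\rangle_g^{\mathrm{BGW}}$ for all $g,n,\vec d$, whence $Z^{\Theta}=Z^{\mathrm{BGW}}$; KdV integrability of $Z^{\Theta}$ is then inherited for free.

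The hard part is the bookkeeping needed to match the $a=0$ specialization of \eqref{Recursionkappsi} against the standard form of the BGW Virasoro operators exactly---in particular getting every double-factorial weight, the factor $\tfrac12$, and especially the anomaly term $\tfrac1{16}\delta_{n,0}$ in $L_0$ to agree, since these normalization constants are precisely what single out $Z^{\mathrm{BGW}}$ among all solutions of the bare KdV hierarchy. A secondary point requiring care is checking that the specialization to $a=0$ captures the full tower $\{L_n\}_{n\geq 0}$ (every value $d_1\geq 0$ does occur) and that no separate string equation $L_{-1}$ is needed, which is consistent with, and indeed dictated by, the vanishing $\Theta_{0,n}=0$ of the genus-zero data.
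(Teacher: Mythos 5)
The paper offers no proof of this theorem: it is imported verbatim, conjectured by Norbury in \cite{norbury2023new} and proved in \cite{chidambaram2022relations} by identifying $\Theta_{g,n}$ with a (shifted) $r=-2$ spin class and using tautological relations --- a route entirely different from yours. So the only question is whether your argument stands on its own. Its final step is fine: the reduction ``\eqref{Recursionpsi} holds for the $\Theta$-correlators, plus $\langle\tau_0\rangle_1^{\Theta}=\tfrac18$ and vanishing in genus $0$, plus well-foundedness of the recursion in $2g-2+n$, forces $Z^{\Theta}=Z^{BGW}$'' is correct and standard, and your bookkeeping (the degree-preserving merging index, the $r+s=d_1-1$ splitting, the $\tfrac1{16}\delta_{k,0}$ anomaly matching $\tfrac12\cdot\tfrac18$ from $\hat{\mathcal{L}}_0$) is consistent with the operators $\mathcal{L}'_k$ in Section 2.

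The genuine gap is at the start: you take \eqref{Recursionkappsi} (equivalently its $a=0$ specialization \eqref{Recursionpsi}) as an available input, but within this paper's logic those recursions are \emph{consequences} of the theorem you are proving. The paper explicitly derives \eqref{Recursionpsi} from \eqref{BGW} via the Virasoro constraints \eqref{VirasoroBGW}, and \eqref{Recursionkappsi} is obtained (Appendix, Theorem A.3) only as a reformulation of the Stanford--Witten volume recursion \eqref{Volumerecursion} for $v^{\Theta}_{g,n}$ --- whose proof for the $\Theta$-classes, in Norbury's work, itself goes through the BGW identification and topological recursion on the Bessel curve. So as written the argument is circular. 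To make it a genuine alternative proof you would need an independent, rigorous derivation of \eqref{Volumerecursion} for $V^{\Theta}_{g,n}$, i.e.\ Stanford--Witten's supergeometric derivation of the recursion for $\hat{V}_{g,n}$ \emph{together with} Theorem \ref{thmNorbury} relating $\hat{V}_{g,n}$ to $V^{\Theta}_{g,n}$; the former is a physics-level computation on supermoduli space that is not established here, and that is precisely where the content of the theorem lives. The uniqueness argument you supply is the easy half.
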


Equation \eqref{BGW} was conjectured by Norbury
\cite{norbury2023new} and proved in \cite{chidambaram2022relations}.
See \cite{dubrovin2021tau,guo2024combinatorics,yang2021gelfand,yang2021extension} for
more studies on BGW tau functions.

From Virasoro constraints (see Section 2), \eqref{BGW} implies the
following DVV type recursion formula.
\begin{equation}\label{Recursionpsi}
\begin{aligned}\langle\tau_{k}\prod\limits_{i=1}\limits^{n}\tau_{d_i} \rangle^{\Theta}_g=\frac{1}{(2k+1)!!}[&\sum\limits_{i=1}\limits^{n}\frac{(2k+2d_i+1)!!}{(2k_i-1)!!}\langle\tau_{k+d_i}\prod\limits_{j\neq i}\tau_{d_j} \rangle^{\Theta}_g\\&+\frac{1}{2}\underset{r+s=k-1}{\sum}(2r+1)!!(2s+1)!!\langle\tau_r\tau_s\prod\limits_{i=1}\limits^{n}\tau_{d_i} \rangle^{\Theta}_{g-1}\\+\frac{1}{2}\underset{r+s=k-1}{\sum}&(2r+1)!!(2s+1)!!\sum_{\substack{g_1+g_2=g\\[3pt]I\sqcup J=\{1,...,n\}}}\langle\tau_{r}\tau_{d(I)} \rangle^{\Theta}_{g_1}\langle\tau_{s}\tau_{d(J)} \rangle^{\Theta}_{g_2}].\end{aligned}\end{equation}
It was first derived by Do and Norbury \cite{do2018topological}. It
is in turn a special case of Stanford-Witten formula
\eqref{Recursionkappsi}.

As main results of our paper, we generalize \eqref{Recursionkappsi}
and \eqref{Recursionpsi} to recursion formulas involving higher
degree $\kappa$ classes.

\begin{theorem}\label{RecursionA}
Let $\mathbf{b}\in\mathbb{N}^{\infty}, g\geq 1, n\geq 1$ and $d_j\geq 0$. Then
\begin{equation}\label{Recursion1}\begin{aligned}&\sum\limits_{\mathbf{L}+\mathbf{L^{'}}=\mathbf{b}}(-1)^{||\mathbf{L}||}\binom{\mathbf{b}}{\mathbf{L}}\frac{(2d_1+2|\mathbf{L}|+1)!!}{(2|\mathbf{L}|-1)!!}\langle\kappa(\mathbf{L^{'}})\tau_{d_1+|\mathbf{L}|}\prod\limits_{j=2}\limits^{n}\tau_{d_j}\rangle_{g}^{\Theta}\\&=\sum\limits_{j=2}\limits^{n}\frac{(2d_1+2d_j+1)!!}{(2d_j-1)!!}\langle\kappa(\mathbf{b})\prod_{i\neq 1,j}\limits^{n}\tau_{d_i}\rangle_{g}^{\Theta}\\&\qquad+\frac{1}{2}\sum\limits_{r+s=d_1-1}(2r+1)!!(2s+1)!!\langle\kappa(\mathbf{b})\tau_{r}\tau_s\prod\limits_{i=2}\limits^{n}\tau_{k_i}\rangle_{g-1}^{\Theta}\\&+\frac{1}{2}\sum_{\substack{\mathbf{e}+\mathbf{f}=b\\[3pt]I\sqcup J=\{2,...,n\}}}\sum\limits_{r+s=d_1-1}\binom{\mathbf{b}}{\mathbf{e}}(2r+1)!!(2s+1)!!\langle\kappa(\mathbf{e})\tau_{r}\tau_{d(I)}\rangle_{g^{'}}^{\Theta}\langle\kappa(\mathbf{f})\tau_{s}\tau_{d(J)}\rangle_{g-g^{'}}^{\Theta}.\end{aligned}\end{equation}
\end{theorem}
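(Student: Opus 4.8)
The plan is to derive the general $\kappa$-recursion \eqref{Recursion1} from the pure $\psi$-class recursion \eqref{Recursionpsi} by trading $\kappa$-classes for $\psi$-classes on moduli spaces with extra marked points. The geometric engine is the compatibility of the $\Theta$-class with the forgetful morphisms $\pi_m\colon\overline{\mathcal{M}}_{g,n+m}\to\overline{\mathcal{M}}_{g,n}$. Iterating Norbury's property $\Theta_{g,n+1}=\psi_{n+1}\cdot\pi^{\ast}\Theta_{g,n}$ and using the standard vanishing $\psi_{n+j}\cdot D_{i,n+j}=0$ on the divisor where the $j$-th new point collides with point $i$, I would first record
\[
\Theta_{g,n+m}=\psi_{n+1}\cdots\psi_{n+m}\,\pi_m^{\ast}\Theta_{g,n}.
\]
Feeding this into the projection formula together with the Arbarello–Cornalba identities $\kappa_a=\pi_{\ast}\psi_{n+1}^{a+1}$ and $\pi^{\ast}\kappa_i=\kappa_i-\psi_{n+1}^{i}$ gives the basic one-step conversion
\[
\langle\kappa_a\,\kappa(\mathbf{c})\textstyle\prod_{i}\tau_{d_i}\rangle_g^{\Theta}=\sum_{\mathbf{L}\le\mathbf{c}}(-1)^{||\mathbf{L}||}\binom{\mathbf{c}}{\mathbf{L}}\langle\kappa(\mathbf{c}-\mathbf{L})\,\tau_{a+|\mathbf{L}|}\textstyle\prod_{i}\tau_{d_i}\rangle_g^{\Theta},
\]
where on the right one new marked point of weight $a+|\mathbf{L}|$ has appeared; here one uses that $\psi_{n+1}^{a+1}\pi^{\ast}\Theta_{g,n}=\psi_{n+1}^{a}\Theta_{g,n+1}$ absorbs the correction terms coming from $\pi^{\ast}\psi_i$. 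Note that the signs $(-1)^{||\mathbf{L}||}$ and the multinomials $\binom{\mathbf{c}}{\mathbf{L}}$ of \eqref{Recursion1} are already visible at this stage.

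Iterating this identity (equivalently, performing M\"obius inversion on the partition lattice, or packaging everything in the generating function $\langle\exp(\sum_i s_i\kappa_i)\prod\tau_{d_i}\rangle_g^{\Theta}$ and reading off a shift of the $\psi$-variables in $Z^{\Theta}$) expresses $\langle\kappa(\mathbf{b})\tau_{d_1}\prod_{j\ge2}\tau_{d_j}\rangle_g^{\Theta}$ as a signed sum of pure $\psi$-class $\Theta$-numbers on $\overline{\mathcal{M}}_{g,n+m}$ with $m\le||\mathbf{b}||$. To each such pure $\psi$-term I would apply \eqref{Recursionpsi} to the distinguished insertion $\tau_{d_1}$, and then re-convert the auxiliary points back into $\kappa$-classes. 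The right-hand side of \eqref{Recursionpsi} splits into four families: (A) $\tau_{d_1}$ merges with a genuine point $j\in\{2,\dots,n\}$; (B) $\tau_{d_1}$ merges with an auxiliary point; (C) the non-separating term; (D) the separating term.

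After re-conversion, family (A) reassembles the auxiliary points into $\kappa(\mathbf{b})$ and produces the first sum on the right of \eqref{Recursion1}; family (C), using $\phi_{irr}^{\ast}\Theta_{g,n}=\Theta_{g-1,n+2}$, gives the non-separating term; family (D), using $\phi_{h,\mathbf{I}}^{\ast}\Theta_{g,n}=\pi_1^{\ast}\Theta\cdot\pi_2^{\ast}\Theta$ together with the multiplicativity of $\kappa$-classes across the node, distributes the $\kappa$-classes as $\mathbf{e}+\mathbf{f}=\mathbf{b}$ and yields the separating sum with weight $\binom{\mathbf{b}}{\mathbf{e}}$. Family (B) is the source of the left-hand side: when $\tau_{d_1}$ absorbs an auxiliary point that represents a block of $||\mathbf{L}||$ of the original $\kappa$-classes of total weight $|\mathbf{L}|$, \eqref{Recursionpsi} contributes the factor $\tfrac{(2d_1+2|\mathbf{L}|+1)!!}{(2|\mathbf{L}|-1)!!}$ and leaves $\kappa(\mathbf{b}-\mathbf{L})$; moving these terms across the equality and combining the M\"obius sign $(-1)^{||\mathbf{L}||-1}$ with the extra $-1$ so produced reproduces exactly the signed sum $\sum_{\mathbf{L}+\mathbf{L}'=\mathbf{b}}(-1)^{||\mathbf{L}||}\binom{\mathbf{b}}{\mathbf{L}}\tfrac{(2d_1+2|\mathbf{L}|+1)!!}{(2|\mathbf{L}|-1)!!}\langle\kappa(\mathbf{L}')\tau_{d_1+|\mathbf{L}|}\prod_{j\ge2}\tau_{d_j}\rangle_g^{\Theta}$ on the left of \eqref{Recursion1}.

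The main obstacle I anticipate is entirely combinatorial: one must verify that the two conversions transport the binomial coefficients, the signs, and the double-factorial ratios with precisely the weights recorded in \eqref{Recursion1}. The delicate point is family (B), where a block of $||\mathbf{L}||$ $\kappa$-classes has to be pre-merged, through the partition-lattice M\"obius function, into a single auxiliary insertion of weight $|\mathbf{L}|$ before \eqref{Recursionpsi} can absorb it in one step; one must check that summing over such blocks produces exactly $\binom{\mathbf{b}}{\mathbf{L}}$ and that the $(||\mathbf{L}||-1)!$ coming from the M\"obius function cancels correctly. I would first fix all constants on the small cases $\mathbf{b}=2\bm{\delta}_a$ and $\mathbf{b}=3\bm{\delta}_a$, and then carry out the general bookkeeping inside the generating function $\langle\exp(\sum_i s_i\kappa_i)\prod\tau_{d_i}\rangle_g^{\Theta}$, where the exponential formula converts the partition sums into a clean substitution and makes the matching of coefficients transparent.
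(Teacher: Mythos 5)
Your proposal is correct and follows essentially the same route as the paper: the iterated conversion you describe (Möbius inversion on the partition lattice for $\kappa$-to-$\psi$ with the $\Theta$-class absorbing one power of $\psi$ per auxiliary point) is exactly the paper's Lemma 3.2, and the subsequent application of \eqref{Recursionpsi} to $\tau_{d_1}$ with the four families (A)--(D) regrouped — family (B) moved to the left-hand side — is precisely the computation carried out in the paper's proof, where the factor $(k+1)$ from choosing which auxiliary point merges cancels against $1/(k+1)!$ to reassemble $\kappa(\mathbf{L}')$ with coefficient $(-1)^{||\mathbf{L}||}\binom{\mathbf{b}}{\mathbf{L}}$.
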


\begin{theorem}\label{RecursionB} Let $\mathbf{b}\in\mathbb{N}^{\infty}, g\geq 1, n\geq 1$ and $d_j\geq 0$. Then
\begin{equation}\label{Recursion2}\begin{aligned}&(2d_1+1)!!\langle\kappa(\mathbf{b})\prod\limits_{i=1}\limits^{n}\tau_{d_i} \rangle^{\Theta}_g\\&=\sum\limits_{j=2}\limits^{n}\sum\limits_{\mathbf{L}+\mathbf{L^{'}}=\mathbf{b}}\mathbf{\alpha_L}\binom{\mathbf{b}}{\mathbf{L}}\frac{(2b+2d_1+2d_j+1)!!}{(2d_j-1)!!}\langle\kappa(\mathbf{L^{'}})\tau_{|\mathbf{L}|+d_1+d_j}\prod\limits_{i\neq 1,j}\tau_{d_i} \rangle^{\Theta}_g\\&+\frac{1}{2}\sum\limits_{\mathbf{L}+\mathbf{L^{'}}=\mathbf{b}}\sum\limits_{r+s=|\mathbf{L}|+d_1-1}\alpha_{\mathbf{L}}\binom{\mathbf{b}}{\mathbf{L}}(2r+1)!!(2s+1)!!\langle\kappa(\mathbf{L^{'}})\tau_r\tau_s\prod\limits_{i=2}\limits^{n}\tau_{d_i} \rangle^{\Theta}_{g-1}\\&+\frac{1}{2}\sum\limits_{\mathbf{L}+\mathbf{e}+\mathbf{f}=\mathbf{b}}\sum_{\substack{r+s=|\mathbf{L}|+d_1-1\\[3pt]I\sqcup J=\{2,...,n\}}}\mathbf{\alpha_L}\binom{\mathbf{b}}{\mathbf{L},\mathbf{e},\mathbf{f}}(2r+1)!!(2s+1)!!\\&\qquad\qquad\qquad\qquad\qquad\qquad\qquad\qquad\qquad\times\langle\kappa(\mathbf{e})\tau_{r}\tau_{d(I)} \rangle^{\Theta}_{g^{'}}\langle\kappa(\mathbf{f})\tau_{s}\tau_{d(J)} \rangle^{\Theta}_{g-g^{'}},\end{aligned}\end{equation}where $\mathbf{\alpha_L}$ are determined by the following identity $$\sum\limits_{\mathbf{L}+\mathbf{L^{'}}=\mathbf{b}}\frac{(-1)^{||\mathbf{L}||}\mathbf{\alpha_L}}{\mathbf{L}!\mathbf{L^{'}}!(2|\mathbf{L^{'}}|-1)!!}=0,\qquad \mathbf{b}\neq \mathbf{0},$$ namely $$\mathbf{\alpha_{b}=\mathbf{b}!\sum_{\substack{\mathbf{L}+\mathbf{L^{'}}=\mathbf{b}\\[1pt]\mathbf{L^{'}}\neq \mathbf{0}}}}\frac{(-1)^{||\mathbf{L^{'}}||-1}\mathbf{\alpha_L}}{\mathbf{L}!\mathbf{L^{'}!}(2|\mathbf{L^{'}}|-1)!!},\qquad \mathbf{b}\neq \mathbf{0},$$ with initial data $\mathbf{\alpha}_\mathbf{0}=1.$
\end{theorem}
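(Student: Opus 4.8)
The plan is to derive Theorem \ref{RecursionB} from Theorem \ref{RecursionA} by a purely algebraic inversion that isolates the single ``leading'' intersection number on the left. Throughout I fix $g,n$ and $d_2,\dots,d_n$, and abbreviate the leading term by
$$T(\mathbf{c},e):=(2e+1)!!\,\langle\kappa(\mathbf{c})\tau_e\textstyle\prod_{j=2}^n\tau_{d_j}\rangle_g^{\Theta},$$
and write $\mathcal{R}(\mathbf{c},e)$ for the expression obtained from the right-hand side of \eqref{Recursion1} by the substitution $\mathbf{b}\mapsto\mathbf{c}$, $d_1\mapsto e$. Dividing each summand on the left of \eqref{Recursion1} by the factor $(2e+2|\mathbf{L}|+1)!!=(2(e+|\mathbf{L}|)+1)!!$ and writing $\langle\kappa(\mathbf{L}')\tau_{e+|\mathbf{L}|}\cdots\rangle=T(\mathbf{L}',e+|\mathbf{L}|)/(2(e+|\mathbf{L}|)+1)!!$, Theorem \ref{RecursionA} becomes the compact identity
$$\mathcal{R}(\mathbf{c},e)=\sum_{\mathbf{L}+\mathbf{L}'=\mathbf{c}}\frac{(-1)^{||\mathbf{L}||}}{(2|\mathbf{L}|-1)!!}\binom{\mathbf{c}}{\mathbf{L}}\,T(\mathbf{L}',e+|\mathbf{L}|),$$
valid for every $\mathbf{c}$ and $e$, which I will call $(\dagger)$. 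This exhibits $\mathcal{R}$ as a unitriangular transform of the tower $\{T(\mathbf{c},e)\}$: the $\mathbf{L}=\mathbf{0}$ summand contributes exactly $T(\mathbf{c},e)$ (using the convention $(-1)!!=1$), and every other summand trades $\kappa$-degree for $\psi$-degree at the first marked point.

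Next I would verify that the entire right-hand side of \eqref{Recursion2} is nothing but $\sum_{\mathbf{L}+\mathbf{L}'=\mathbf{b}}\alpha_{\mathbf{L}}\binom{\mathbf{b}}{\mathbf{L}}\,\mathcal{R}(\mathbf{L}',d_1+|\mathbf{L}|)$, which I will call $(\star)$. Indeed, the three groups of terms in $\mathcal{R}(\mathbf{L}',d_1+|\mathbf{L}|)$ reproduce the three lines of \eqref{Recursion2} verbatim after setting $\mathbf{b}\mapsto\mathbf{L}'$, $d_1\mapsto d_1+|\mathbf{L}|$ and weighting by $\alpha_{\mathbf{L}}\binom{\mathbf{b}}{\mathbf{L}}$; for the boundary/splitting term one uses $\binom{\mathbf{b}}{\mathbf{L}}\binom{\mathbf{L}'}{\mathbf{e}}=\binom{\mathbf{b}}{\mathbf{L},\mathbf{e},\mathbf{f}}$ with $\mathbf{f}=\mathbf{L}'-\mathbf{e}$. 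Since the left-hand side of \eqref{Recursion2} is precisely $T(\mathbf{b},d_1)$, the theorem reduces to the inversion formula $T(\mathbf{b},d_1)=(\star)$.

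To establish this I substitute $(\dagger)$ into $(\star)$ and interchange the order of summation. Setting $\mathbf{N}=\mathbf{L}+\mathbf{M}$ for the total transferred $\kappa$-degree, the coefficient of $T(\mathbf{b}-\mathbf{N},\,d_1+|\mathbf{N}|)$ is
$$C_{\mathbf{N}}=\sum_{\mathbf{L}+\mathbf{M}=\mathbf{N}}\alpha_{\mathbf{L}}\binom{\mathbf{b}}{\mathbf{L}}\binom{\mathbf{b}-\mathbf{L}}{\mathbf{M}}\frac{(-1)^{||\mathbf{M}||}}{(2|\mathbf{M}|-1)!!}=\binom{\mathbf{b}}{\mathbf{N}}\,\mathbf{N}!\sum_{\mathbf{L}+\mathbf{M}=\mathbf{N}}\frac{(-1)^{||\mathbf{M}||}\alpha_{\mathbf{L}}}{\mathbf{L}!\,\mathbf{M}!\,(2|\mathbf{M}|-1)!!},$$
where I used $\binom{\mathbf{b}}{\mathbf{L}}\binom{\mathbf{b}-\mathbf{L}}{\mathbf{M}}=\binom{\mathbf{b}}{\mathbf{N}}\binom{\mathbf{N}}{\mathbf{L}}$ and $\binom{\mathbf{N}}{\mathbf{L}}=\mathbf{N}!/(\mathbf{L}!\,\mathbf{M}!)$. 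Factoring out $(-1)^{||\mathbf{N}||}$ via $||\mathbf{M}||=||\mathbf{N}||-||\mathbf{L}||$ rewrites the inner sum as $(-1)^{||\mathbf{N}||}\sum_{\mathbf{L}+\mathbf{M}=\mathbf{N}}\frac{(-1)^{||\mathbf{L}||}\alpha_{\mathbf{L}}}{\mathbf{L}!\,\mathbf{M}!\,(2|\mathbf{M}|-1)!!}$, which is exactly the defining relation of $\alpha_{\mathbf{L}}$ and therefore vanishes for $\mathbf{N}\neq\mathbf{0}$; for $\mathbf{N}=\mathbf{0}$ it equals $\alpha_{\mathbf{0}}=1$. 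Hence $C_{\mathbf{N}}=\delta_{\mathbf{N},\mathbf{0}}$, only $T(\mathbf{b},d_1)$ survives, and $(\star)$ follows, completing the proof.

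The conceptual content is simply that $(\dagger)$ and the defining recursion for $\{\alpha_{\mathbf{L}}\}$ encode two mutually inverse unitriangular transforms of the tower $\{T(\cdot,\cdot)\}$, so inverting Theorem \ref{RecursionA} to solve for the leading term produces Theorem \ref{RecursionB}. I expect the only delicate point to be the combinatorial bookkeeping: correctly merging the nested multi-index binomials into a single multinomial through $\binom{\mathbf{b}}{\mathbf{L}}\binom{\mathbf{b}-\mathbf{L}}{\mathbf{M}}=\binom{\mathbf{b}}{\mathbf{N}}\binom{\mathbf{N}}{\mathbf{L}}$, and aligning the sign $(-1)^{||\mathbf{M}||}$ arising from $(\dagger)$ with the $(-1)^{||\mathbf{L}||}$ appearing in the definition of $\alpha_{\mathbf{L}}$; keeping the convention $(-1)!!=1$ in force is also essential for reading off the $\mathbf{L}=\mathbf{0}$ and $\mathbf{N}=\mathbf{0}$ contributions correctly.
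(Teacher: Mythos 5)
Your proposal is correct and follows essentially the same route as the paper: the paper normalizes by $\mathbf{b}!$ to define $F(\mathbf{b},d_1)=(2d_1+1)!!\langle\kappa(\mathbf{b})\prod\tau_{d_i}\rangle_g^{\Theta}/\mathbf{b}!$ and $G(\mathbf{b},d_1)$ as the normalized right side of \eqref{Recursion1}, and then invokes the inversion Lemma 3.1 (from Liu--Xu) with the pair of mutually inverse coefficient sequences $(-1)^{||\mathbf{L}||}/(\mathbf{L}!(2|\mathbf{L}|-1)!!)$ and $\alpha_{\mathbf{L}}/\mathbf{L}!$, which is exactly the unitriangular inversion you carry out by hand in computing $C_{\mathbf{N}}=\delta_{\mathbf{N},\mathbf{0}}$. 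Your explicit verification of $(\star)$ and of the coefficient collapse is a correct unpacking of that lemma's proof in this setting, so there is no substantive difference.
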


Alexandrov \cite{alexandrov2023cut} proved that the generating
function of super higher Weil-Petersson volumes was governed by
cut-and-join operators, which also gives an effective way to compute
these intersection numbers.

Norbury \cite{norbury2020enumerative} proved a super version of
Kauffman-Manin-Zagier's formula \cite{kaufmann1996higher} converting
mixed $\kappa$ and $\psi$ class integrals to pure $\psi$ class
integrals. We will prove Theorem \ref{RecursionA} from Norbury's
formula and \eqref{Recursionpsi}.

On the other hand, Theorem \ref{RecursionB} is obtained from Theorem
\ref{RecursionA} by using a trick of reciprocal multivariate series.
See Section 3 for details.

Denote $\alpha (l,0,...,0)$ by $\alpha_l$, we recover
\eqref{Recursionkappsi} from \eqref{Recursion2} with
$\alpha_l=l!\beta_l,$ where $\beta_l$ is defined by
$$\frac{1}{cos(\sqrt{2}x)}=\sum\limits_{l=0}\limits^{\infty}\beta_lx^{2l}.$$
We also have $$\alpha(\bm{\delta}_l)=\frac{1}{(2l+1)!!}.$$ Setting
$\mathbf{b}=\mathbf{0},$ we get \eqref{Recursionpsi}. So we may
apply Theorem \ref{RecursionB} to compute any super intersection
numbers of $\psi$ and $\kappa$ classes recursively with two initial
values,
$$\langle\tau_0\rangle_{1}^{\Theta}=\frac{1}{8},\qquad \langle\kappa_1\rangle_{2}^{\Theta}=\frac{3}{128}.$$

Note that Theorem \ref{RecursionA} and \ref{RecursionB} hold only
for $n\geq 1$. If $n=0$ i.e. for higher Weil-Petersson volumes
$V_g(\kappa(\mathbf{b}))$, we may apply the following formula first
(see Proposition 4.2 for a proof).
$$\langle\kappa(\mathbf{b}) \rangle_{g}^{\Theta}=\frac{1}{2g-2}\sum\limits_{\mathbf{L}+\mathbf{L^{'}}=\mathbf{b}}(-1)^{||\mathbf{L}||}\binom{\mathbf{b}}{\mathbf{L}}\langle\tau_{|\mathbf{L}|}\kappa(\mathbf{L}^{'})\rangle_{g}^{\Theta}.$$
For the particular case $\mathbf{b}=(m,0,0,...)$, it has been proved
by Norbury \cite{norbury2020enumerative}.

Our work parallels that of \cite{mulase2006mirzakhani,
liu2009recursion}. Moreover, we derive the following two recursion
formulae for super higher Weil-Petersson volumes (without $\psi$
classes).

\begin{theorem}\label{superHWP1} Let $g\geq 1$, $n\geq 0$ and $\mathbf{b}\in\mathbb{N}^{\infty}$. Then
\begin{equation}\label{HNSWP}
V_{g,n+1}(\kappa(\mathbf{b}))=(2g-2+n+||\mathbf{b}||)V_{g,n}(\kappa(\mathbf{b}))+\sum_{\substack{\mathbf{L}+\mathbf{L^{'}}=\mathbf{b}\\[1pt]||\mathbf{L^{'}}||\geq 2}}\binom{\mathbf{b}}{\mathbf{L}}V_{g,n}(\kappa(\mathbf{L})\kappa_{|\mathbf{L^{'}}|}).
\end{equation}
\end{theorem}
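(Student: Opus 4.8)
The plan is to push the entire computation down to $\overline{\mathcal{M}}_{g,n}$ along the forgetful morphism $\pi\colon \overline{\mathcal{M}}_{g,n+1}\to\overline{\mathcal{M}}_{g,n}$, exactly in the spirit of the classical $\kappa$-class recursions of \cite{mulase2006mirzakhani,liu2009recursion}, the only genuinely new input being property (\rmnum{2}) of the $\Theta$ class. The two algebro-geometric facts I would invoke are the Arbarello--Cornalba comparison formula $\kappa_i=\pi^{\ast}\kappa_i+\psi_{n+1}^{i}$ on $\overline{\mathcal{M}}_{g,n+1}$ and the pushforward $\pi_{\ast}(\psi_{n+1}^{a+1})=\kappa_a$ on $\overline{\mathcal{M}}_{g,n}$, where $\kappa_0=2g-2+n$. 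Both are consistent with the paper's definition $\kappa_i=\pi_\ast(c_1(\omega_\pi(\sum D_j))^{i+1})$, since on the universal curve one has $c_1(\omega_\pi(\sum_{j=1}^n D_j))=\psi_{n+1}$, whence $\kappa_0=\pi_\ast(\psi_{n+1})=\deg\omega_C+n=2g-2+n$. Throughout I assume $2g-2+n>0$ so that the target is a genuine moduli space.

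First I would write $V_{g,n+1}(\kappa(\mathbf{b}))=\int_{\overline{\mathcal{M}}_{g,n+1}}\Theta_{g,n+1}\kappa(\mathbf{b})$ and substitute $\Theta_{g,n+1}=\psi_{n+1}\,\pi^{\ast}\Theta_{g,n}$. Next, expanding each factor $\kappa_i=\pi^{\ast}\kappa_i+\psi_{n+1}^{i}$ by the binomial theorem and collecting the powers of $\psi_{n+1}$ (using $\prod_i\psi_{n+1}^{i L^{'}(i)}=\psi_{n+1}^{|\mathbf{L^{'}}|}$) gives
\[
\kappa(\mathbf{b})=\sum_{\mathbf{L}+\mathbf{L^{'}}=\mathbf{b}}\binom{\mathbf{b}}{\mathbf{L}}\,\pi^{\ast}\kappa(\mathbf{L})\,\psi_{n+1}^{|\mathbf{L^{'}}|}
\]
on $\overline{\mathcal{M}}_{g,n+1}$. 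Substituting this and applying the projection formula together with $\pi_{\ast}(\psi_{n+1}^{1+|\mathbf{L^{'}}|})=\kappa_{|\mathbf{L^{'}}|}$ then collapses the integral back to $\overline{\mathcal{M}}_{g,n}$:
\[
V_{g,n+1}(\kappa(\mathbf{b}))=\sum_{\mathbf{L}+\mathbf{L^{'}}=\mathbf{b}}\binom{\mathbf{b}}{\mathbf{L}}\int_{\overline{\mathcal{M}}_{g,n}}\Theta_{g,n}\,\kappa(\mathbf{L})\,\kappa_{|\mathbf{L^{'}}|}.
\]

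The remaining step, which is the only delicate one, is to sort the right-hand side by the value of $||\mathbf{L^{'}}||$. The term $\mathbf{L^{'}}=\mathbf{0}$ contributes $\kappa_0=2g-2+n$ times $V_{g,n}(\kappa(\mathbf{b}))$. For $||\mathbf{L^{'}}||=1$ one has $\mathbf{L^{'}}=\bm{\delta}_k$, so $\binom{\mathbf{b}}{\mathbf{L}}=b(k)$ and $\kappa_{|\mathbf{L^{'}}|}=\kappa_k$; the key observation is that $\kappa(\mathbf{b}-\bm{\delta}_k)\kappa_k=\kappa(\mathbf{b})$, so summing over $k$ yields $\sum_k b(k)\,V_{g,n}(\kappa(\mathbf{b}))=||\mathbf{b}||\,V_{g,n}(\kappa(\mathbf{b}))$, which I would absorb into the leading coefficient to produce $(2g-2+n+||\mathbf{b}||)$. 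The terms with $||\mathbf{L^{'}}||\geq 2$ are genuine higher-$\kappa$ integrals $V_{g,n}(\kappa(\mathbf{L})\kappa_{|\mathbf{L^{'}}|})$ and survive as the stated sum. I expect the only point needing care is precisely this recognition that the weight-one part of the expansion is not an independent higher-$\kappa$ contribution but folds back into $V_{g,n}(\kappa(\mathbf{b}))$; everything else follows directly from property (\rmnum{2}) and the two standard forgetful-map identities.
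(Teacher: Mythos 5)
Your proposal is correct and follows essentially the same route as the paper: substitute $\Theta_{g,n+1}=\psi_{n+1}\pi^{\ast}\Theta_{g,n}$, expand $\kappa_i=\pi^{\ast}\kappa_i+\psi_{n+1}^{i}$, push forward along $\pi$ (which is exactly the content of the paper's Lemma 4.3), and then split the resulting sum according to $||\mathbf{L^{'}}||=0$, $||\mathbf{L^{'}}||=1$, and $||\mathbf{L^{'}}||\geq 2$. Your explicit treatment of the $||\mathbf{L^{'}}||=1$ terms folding back into $||\mathbf{b}||\,V_{g,n}(\kappa(\mathbf{b}))$ is a step the paper leaves implicit, and it is carried out correctly.
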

The above formula reduces the calculations of
$V_{g,n}(\kappa(\mathbf{b}))$ to $V_{g}(\kappa(\mathbf{b}))$.

\begin{theorem}\label{superHWP2} Let $g\geq 1$, $\mathbf{b}\in\mathbb{N}^{\infty}$. Then
\begin{equation}\label{HSWP}\begin{aligned}
||\mathbf{b}||V_{g}(\kappa(\mathbf{b}))=&\sum_{\substack{\mathbf{L}+\mathbf{L_1}+\mathbf{L_2}=\mathbf{b}\\[1pt]||\mathbf{L}||\geq 1}}(-1)^{||\mathbf{L}||-1}\binom{\mathbf{b}}{\mathbf{L},\mathbf{L_1},\mathbf{L_2}}V_{g}(\kappa(\mathbf{L_1})\kappa_{|\mathbf{L}|+|\mathbf{L_2}|})\\&\qquad-\sum_{\substack{\mathbf{L}+\mathbf{L^{'}}=\mathbf{b}\\[1pt]||\mathbf{L^{'}}||\geq 2}}V_{g}(\kappa(\mathbf{L})\kappa_{|\mathbf{L^{'}}|}).\end{aligned}
\end{equation}
\end{theorem}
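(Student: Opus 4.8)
The plan is to derive Theorem \ref{superHWP2} by feeding the point-forgetting recursion of Theorem \ref{superHWP1} into the $n=0$ reduction formula (Proposition 4.2), using one auxiliary pushforward identity that converts a single $\psi$-insertion against the $\Theta$ class into pure $\kappa$-integrals over $\overline{\mathcal{M}}_{g,0}$. Throughout I work in the substantial range $g\geq 2$ (so that $2g-2\neq 0$ and Proposition 4.2 applies); the case $g=1$ forces $\mathbf{b}=\mathbf{0}$ by the dimension constraint $|\mathbf{b}|=g-1$, and then both sides of \eqref{HSWP} vanish.

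First I would establish, for every $m\geq 0$ and $\mathbf{c}\in\mathbb{N}^{\infty}$, the identity
$$\langle\tau_m\kappa(\mathbf{c})\rangle_g^{\Theta}=\sum_{\mathbf{M}+\mathbf{M^{'}}=\mathbf{c}}\binom{\mathbf{c}}{\mathbf{M}}V_g\bigl(\kappa(\mathbf{M^{'}})\kappa_{m+|\mathbf{M}|}\bigr),$$
with the convention $\kappa_0=2g-2$. To prove it I would push forward along $\pi:\overline{\mathcal{M}}_{g,1}\to\overline{\mathcal{M}}_{g,0}$. Property (\rmnum{2}) of the $\Theta$ class gives $\Theta_{g,1}=\psi_1\,\pi^{\ast}\Theta_{g,0}$, so $\langle\tau_m\kappa(\mathbf{c})\rangle_g^{\Theta}=\int_{\overline{\mathcal{M}}_{g,1}}\psi_1^{m+1}\,\pi^{\ast}\Theta_{g,0}\,\kappa(\mathbf{c})$. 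Expanding through the Arbarello--Cornalba relation $\kappa_i=\pi^{\ast}\kappa_i+\psi_1^i$ on $\overline{\mathcal{M}}_{g,1}$ yields $\kappa(\mathbf{c})=\sum_{\mathbf{M}+\mathbf{M^{'}}=\mathbf{c}}\binom{\mathbf{c}}{\mathbf{M}}\pi^{\ast}\kappa(\mathbf{M^{'}})\,\psi_1^{|\mathbf{M}|}$, and the projection formula together with $\pi_{\ast}(\psi_1^{a+1})=\kappa_a$ gives the claim. Note that its special case $m=0$ is exactly Theorem \ref{superHWP1} at $n=0$.

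With this in hand the argument is a multi-index bookkeeping. Substituting the auxiliary identity (at $m=|\mathbf{L}|$, $\mathbf{c}=\mathbf{L^{'}}$) into Proposition 4.2,
$$(2g-2)V_g(\kappa(\mathbf{b}))=\sum_{\mathbf{L}+\mathbf{L^{'}}=\mathbf{b}}(-1)^{||\mathbf{L}||}\binom{\mathbf{b}}{\mathbf{L}}\langle\tau_{|\mathbf{L}|}\kappa(\mathbf{L^{'}})\rangle_g^{\Theta},$$
and merging binomials via $\binom{\mathbf{b}}{\mathbf{L}}\binom{\mathbf{L^{'}}}{\mathbf{M}}=\binom{\mathbf{b}}{\mathbf{L},\mathbf{M},\mathbf{M^{'}}}$, rewrites $(2g-2)V_g(\kappa(\mathbf{b}))$ as a triple sum over $\mathbf{L}+\mathbf{M}+\mathbf{M^{'}}=\mathbf{b}$ with weight $(-1)^{||\mathbf{L}||}$. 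I then split this sum by whether $\mathbf{L}=\mathbf{0}$. Denote by $S_1$ and $S_2$ the first and second sums on the right-hand side of \eqref{HSWP}. The $||\mathbf{L}||\geq1$ part, after relabeling $\mathbf{M^{'}}\to\mathbf{L_1}$ and $\mathbf{M}\to\mathbf{L_2}$, equals $-S_1$ (the sign $(-1)^{||\mathbf{L}||}=-(-1)^{||\mathbf{L}||-1}$). The $\mathbf{L}=\mathbf{0}$ part is precisely $\langle\tau_0\kappa(\mathbf{b})\rangle_g^{\Theta}=V_{g,1}(\kappa(\mathbf{b}))$ (the auxiliary identity at $m=0$); applying Theorem \ref{superHWP1} at $n=0$ replaces it by $(2g-2+||\mathbf{b}||)V_g(\kappa(\mathbf{b}))+S_2$. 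Thus $(2g-2)V_g(\kappa(\mathbf{b}))=(2g-2+||\mathbf{b}||)V_g(\kappa(\mathbf{b}))+S_2-S_1$; subtracting $(2g-2)V_g(\kappa(\mathbf{b}))$ from both sides leaves $||\mathbf{b}||V_g(\kappa(\mathbf{b}))=S_1-S_2$, which is \eqref{HSWP}.

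The difficulty here is bookkeeping rather than conceptual. The delicate points are: matching the sign $(-1)^{||\mathbf{L}||}$ coming from Proposition 4.2 against the $(-1)^{||\mathbf{L}||-1}$ in \eqref{HSWP} once the $\mathbf{L}=\mathbf{0}$ block is extracted; correctly handling the degenerate value $\kappa_0=2g-2$, which enters only through that extracted block and is exactly what Theorem \ref{superHWP1} converts into the shift from $2g-2$ to $2g-2+||\mathbf{b}||$; and verifying that the multinomial recombination plus relabeling send the two surviving pieces precisely onto $S_1$ and $S_2$. Identifying the $\mathbf{L}=\mathbf{0}$ block with $V_{g,1}(\kappa(\mathbf{b}))$, so that Theorem \ref{superHWP1} may be invoked, is the step where care is most needed.
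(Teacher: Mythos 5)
Your argument is correct and follows essentially the same route as the paper: Proposition 4.2 at $n=0$, the pushforward expansion of $\langle\tau_{|\mathbf{L}|}\kappa(\mathbf{L'})\rangle_g^{\Theta}$ via $\Theta_{g,1}=\psi_1\pi^{\ast}\Theta_{g}$ and $\kappa_i=\pi^{\ast}\kappa_i+\psi_1^i$, and Theorem \ref{superHWP1} applied to the extracted $V_{g,1}(\kappa(\mathbf{b}))$ term. The only difference is that you isolate the pushforward step as a standalone auxiliary identity (and treat $g=1$ separately), whereas the paper performs the same computation inline.
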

However, \eqref{HSWP} is not an effective formula to calculate
$V_{g}(\kappa(\mathbf{b}))$, although it does give nontrivial
relations among $V_{g}(\kappa(\mathbf{b}))$.

Define the generating function
$G^{\Theta}(\hbar,\mathbf{s},\mathbf{t})$ of
$\langle\kappa(\mathbf{b})\prod\limits_{i=1}\limits^{n}\tau_{d_i}\rangle_{g}^{\Theta}$
as
\begin{equation}\label{GTheta}G^{\Theta}(\hbar,\mathbf{s},\mathbf{t}):=\mathrm{exp}\
\sum\limits_{g,n}\frac{\hbar^{g-1}}{n!}\sum\limits_{\mathbf{d}\in
N^{n}}\int_{\overline{\mathcal{M}}_{g,n}}\Theta_{g,n}\prod\limits_{i=1}\limits^{n}\psi_i^{d_i}t_{d_i}\prod\limits_{j=1}\limits^{\infty}\kappa_j^{m_j}\frac{s_j^{m_j}}{m_j!}.\end{equation}

From Theorem \ref{RecursionA} and \ref{RecursionB}, we gave a new
proof of a theorem of Norbury that that by a suitable parameter
shift, the generating function
$G^{\Theta}(\hbar,\mathbf{s},\mathbf{t})$ is still annihalated by
Virasoro constraints.
\begin{theorem}\textup{\cite[Theorem 5.7]{norbury2020enumerative}}\label{Vir}
We have $$G^{\Theta}(\hbar,\mathbf{s},t_0,t_1,...)=Z^{\Theta}(t_0,t_1+p_1(\mathbf{s}),t_2+p_2(\mathbf{s}),...),$$ where $p_k$ are polynomials in $\mathbf{s}$ given by $$p_k(\mathbf{s})=\sum\limits_{||\mathbf{L}||=k}\frac{(-1)^{||\mathbf{L}||-1}}{\mathbf{L!}}\mathbf{s}^\mathbf{L},$$ where $\mathbf{s}^\mathbf{L}=\prod\limits_{i\geq 1}s_i^{L(i)}.$
In particular, for any $\mathbf{s}\in N^{\infty}$, $G^{\Theta}(\hbar,\mathbf{s},\mathbf{t})$ is a tau function of KdV hierarchy.
\end{theorem}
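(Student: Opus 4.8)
The plan is to interpret the parameter shift $t_k \mapsto t_k + p_k(\mathbf{s})$ as a conjugation of the Virasoro constraints satisfied by $Z^\Theta$, and then to conclude by uniqueness of the solution of those constraints. By Theorem \ref{BGW-Theta}, $Z^\Theta=Z^{BGW}$ is annihilated by a family of Virasoro operators $\{L_m\}$, and these constraints are equivalent to the DVV recursion \eqref{Recursionpsi}; in particular $Z^\Theta$ is the unique formal power series solving \eqref{Recursionpsi} with the normalization $\langle\tau_0\rangle_1^\Theta=\tfrac18$. Since $p_k(\mathbf{0})=0$, both $G^\Theta(\hbar,\mathbf{s},\mathbf{t})$ and $\Psi(\hbar,\mathbf{s},\mathbf{t}):=Z^\Theta(\hbar,t_0,t_1+p_1(\mathbf{s}),\dots)$ reduce to $Z^\Theta(\hbar,\mathbf{t})$ at $\mathbf{s}=\mathbf{0}$, so it will be enough to show that $G^\Theta$ and $\Psi$ satisfy the same $\mathbf{s}$-deformed recursion and then run an induction on the total $\mathbf{s}$-degree.

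First I would record the elementary algebra of the shift. The polynomials $p_k$ are packaged by the single generating identity
\[
\sum_{k\geq 1}p_k(\mathbf{s})\,z^{k}=1-\exp\Big(-\sum_{i\geq 1}s_i z^{i}\Big),
\]
from which $\partial p_k/\partial s_j=\delta_{jk}-p_{k-j}$ (with the convention $p_{l}=0$ for $l\leq 0$). Hence, for $\Psi$ the chain rule gives $\partial_{s_j}\Psi=\partial_{t_j}\Psi-\sum_{l\geq 1}p_l\,\partial_{t_{j+l}}\Psi$. Equivalently, writing each $L_m=A_m+\sum_k c_k\,t_k\,\partial_{t_{k+m}}$ with $A_m$ collecting the pure-derivative (quadratic and $\partial_{t_{m+1}}$) terms, the chain rule shows that $L_m Z^\Theta=0$ pulls back under $t_k\mapsto t_k+p_k$ to $\hat L_m\Psi=0$, where $\hat L_m$ is obtained from $L_m$ by replacing the multiplier $t_k$ in the linear term by $t_k+p_k(\mathbf{s})$. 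Thus $\Psi$ is killed by the $\mathbf{s}$-deformed operators $\hat L_m$.

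The heart of the argument is to recognize the recursions of Theorems \ref{RecursionA} and \ref{RecursionB} as precisely the coefficient form of the deformed constraints $\hat L_m G^\Theta=0$. Expanding $\hat L_m G^\Theta=0$ in powers of $\hbar$, $\mathbf{t}$ and $\mathbf{s}$, the $\partial_{t_{m+1}}$ term produces the clean left-hand side $(2d_1+1)!!\langle\kappa(\mathbf{b})\prod_i\tau_{d_i}\rangle_g^\Theta$ of \eqref{Recursion2}, while the $p_k$-corrections in the linear term together with the quadratic term produce exactly the three sums on its right-hand side, the genus-reducing and boundary-splitting contributions of DVV being carried along unchanged. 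The one nontrivial point is the bookkeeping of constants: one must check that the numbers $\alpha_{\mathbf{L}}$, defined in Theorem \ref{RecursionB} through the reciprocal multivariate series $\sum_{\mathbf{L}+\mathbf{L}'=\mathbf{b}}\frac{(-1)^{||\mathbf{L}||}\alpha_{\mathbf{L}}}{\mathbf{L}!\,\mathbf{L}'!\,(2|\mathbf{L}'|-1)!!}=0$, are exactly the weights obtained by feeding the combinatorial shift $p_k$ through the double-factorial weights of $L_m$. Theorem \ref{RecursionA} is the inverse packaging of the same identity, and comparing the two makes this matching transparent; carrying out this reciprocity between the $p_k$ and the $\alpha_{\mathbf{L}}$ is where essentially all of the work lies. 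The outcome is a purely formal equivalence, valid for any series $X$: the coefficients of $X$ obey the recursion \eqref{Recursion2} if and only if $\hat L_m X=0$ for all $m$.

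Finally I would conclude by uniqueness. Theorem \ref{RecursionB}, together with the reduction formula for $n=0$ quoted after it, expresses every coefficient $\langle\kappa(\mathbf{b})\prod\tau_{d_i}\rangle_g^\Theta$ in terms of coefficients of strictly smaller $\mathbf{s}$-degree or strictly smaller $2g-2+n$; hence $G^\Theta$ is the unique solution of this recursion with initial data $\langle\tau_0\rangle_1^\Theta=\tfrac18$ and $\langle\kappa_1\rangle_2^\Theta=\tfrac{3}{128}$. By the equivalence established above, $\Psi$ satisfies the same recursion since $\hat L_m\Psi=0$; and its initial data agree with those of $G^\Theta$, because $p_k(\mathbf{0})=0$ forces $\Psi$ and $Z^\Theta$ to share all pure $\psi$-coefficients (in particular $\langle\kappa_1\rangle_2^\Theta=\langle\tau_1\rangle_2^\Theta$, a pure coefficient, via $\Theta_{2,1}=\psi_1\pi^\ast\Theta_{2,0}$). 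An induction on $\mathbf{s}$-degree therefore gives $G^\Theta=\Psi$, which is the asserted identity. The final assertion is then immediate: for each fixed $\mathbf{s}$ the numbers $p_k(\mathbf{s})$ are constants, so $\Psi$ is a shift of the KdV times of the tau function $Z^\Theta$ and is itself a tau function of the KdV hierarchy.
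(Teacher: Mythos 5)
Your proposal is correct and follows essentially the same route as the paper's Section 5: encode the higher-$\kappa$ recursion as $\mathbf{s}$-deformed Virasoro constraints annihilating $G^{\Theta}$, identify the deformation with the time shift $t_k\mapsto t_k+p_k(\mathbf{s})$, and conclude by uniqueness of the solution of the constraints (the paper packages this through the operators $\hat V_k$, $V_k$ and the Mulase--Safnuk variables, and additionally checks the Virasoro commutation relations, which are not needed for this theorem). One clarification that removes the step you flag as carrying ``essentially all of the work'': the pullback of the undeformed Virasoro operator under the shift is most directly the operator form of Theorem \ref{RecursionA}, not Theorem \ref{RecursionB} --- its coefficients $\gamma_{\mathbf{L}}=(-1)^{||\mathbf{L}||}/\bigl(\mathbf{L}!\,(2|\mathbf{L}|-1)!!\bigr)$ reproduce the coefficients of $p_{|\mathbf{L}|}$ on the nose once multiplied by the double factorial $(2|\mathbf{L}|-1)!!$ already present in the linear term of $L_m$. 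So no reciprocity between $p_k$ and $\alpha_{\mathbf{L}}$ has to be verified: that inversion is exactly Lemma 3.1, already used in the paper to pass between Theorems \ref{RecursionA} and \ref{RecursionB}, and your uniqueness-plus-induction argument runs verbatim on the Theorem \ref{RecursionA} form.
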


The paper is organized as follows. In Section 2, we give a brief overview of KdV hierarchy and
Virasoro constraints. In Section 3, we prove
Theorem \ref{RecursionA} and \ref{RecursionB}. In Section 4, we prove Theorem \ref{superHWP1} and Theorem \ref{superHWP2}. In Section 5, we give a proof of Theorem \ref{Vir} from Theorem
\ref{RecursionA} and \ref{RecursionB}. In the Appendix, we rewrite
Stanford-Witten's recursion formula as a recursion formula of super intersection numbers.

\

\noindent{\bf Acknowledgements} We thank Di Yang for very helpful
comments.

\vskip 20pt
\section{KdV hierarchy and Virasoro constraints}
\setcounter{equation}{0}

In this section we record some recursion formulae of super intersection numbers involving $\Theta$ and $\psi$ classes from KdV hierarchy and Virasoro constraints.

\subsection{KdV hierarchy} A fuction $Z(\hbar,t_0,t_1,...)$ is said to be the tau function of KdV hierarchy if $U=\hbar\frac{\partial^2\mathrm{log}Z}{\partial t_0^2}$ is a solution of the KdV hierarchy
\begin{equation}\label{KdV}U_{t_n}=\frac{\partial}{\partial t_0}R_{n+1}(\hbar,U,U_{t_0},U_{t_0t_0},...)\ \  \text{with}\ \ U(\hbar,t_0,t_1,...)|_{t_{i>0}=0}=f(\hbar,t_0),\end{equation} where $R_{n+1}(\hbar,U,U_{t_0},U_{t_0t_0},...)$ are certain polynomials in $U$ and its $t_0$ derivatives, and can be defined inductively by the formulae
$$R_1=U,\qquad \frac{\partial R_{n+1}}{t_0}=\frac{1}{2n+1}(R_n\frac{\partial U}{t_0}+2U\frac{\partial R_n}{\partial t_0}+\frac{\hbar}{4}\frac{\partial^3 R_n}{\partial t_0^3}).$$
The first equation in this hierarchy is the KdV equation \begin{equation}\label{KdVequation}U_{t_1}=UU_{t_0}+\frac{\hbar}{12}U_{t_0t_0t_0}.\end{equation}

The Witten-Kontsevich theorem is related to matrix Airy model \cite{kontsevich1992intersection,witten1990two}
$$f(\hbar,t_0)=\hbar t_0.$$
On the other hand, the Br{\'e}zin-Gross-Witten solution appears in the unitary matrix model \cite{brezin1980external,gross1980possible} with
$$f(\hbar,t_0)=\frac{\hbar}{8(1-t_0)^2}.$$

\subsection{Virasoro constraints} It is also known that \eqref{Witten-Kontsevich} and \eqref{ZTheta} both obey the Virasoro constraints \cite{dijkgraaf1991loop,gross1992unitary,kontsevich1992intersection,witten1990two}.

For each integer $m\geq -1$, define the following differential operators
\begin{align*}\hat{\mathcal{L}}_m=&\frac{\hbar}{2}\sum\limits_{i+j=m-1}(2i+1)!!(2j+1)!!\frac{\partial^2}{\partial t_i\partial t_j}+\sum\limits_{i=0}\limits^{\infty}\frac{(2i+2m+1)!!}{(2i-1)!!}t_i\frac{\partial}{\partial t_{i+m}}\\&+\frac{1}{8}\delta_{m,0}+\frac{t_0^2}{2\hbar}\delta_{m,-1}.\end{align*}
Then we have\begin{equation}\label{VirasoroKW}\mathcal{L}_k Z^{KW}(\hbar,t_0,t_1,...)=0,\qquad k=-1,0,1,...\end{equation}
\begin{equation}\label{VirasoroBGW}\mathcal{L}^{'}_k Z^{BGW}(\hbar,t_0,t_1,...)=0,\qquad k=0,1,2,...\end{equation}
with $$\mathcal{L}_k=-\frac{1}{2}\left((2m+3)!!\frac{\partial}{\partial t_{k+1}}-\hat{\mathcal{L}}_k\right),\qquad \mathcal{L}^{'}_k=-\frac{1}{2}\left((2m+1)!!\frac{\partial}{\partial t_{k+1}}-\hat{\mathcal{L}}_k\right).$$

\subsection{Intersection numbers} The KdV equation \eqref{KdV} and Virasoro constraints \eqref{VirasoroKW} give rise to the following formulae respectively.
$$\langle\tau_{0}\tau_1\prod\limits_{i=1}\limits^{n}\tau_{d_i}\rangle_{g}=\frac{1}{2}\sum\limits_{I\sqcup J=\{1,...n\}}\langle\tau_{0}^2\tau_{d(I)}\rangle_{g^{'}}\langle\tau_{0}^2\tau_{d(J)}\rangle_{g-g^{'}}+\frac{1}{12}\langle\tau_0^{4}\prod\limits_{i=1}\limits^{n}\tau_{d_i}\rangle_{g-1},$$
and
\begin{align*}\langle\tau_{k+1}\prod\limits_{i=1}\limits^{n}\tau_{d_i} \rangle_g=\frac{1}{(2k+3)!!}[&\sum\limits_{i=1}\limits^{n}\frac{(2k+2d_i+1)!!}{(2d_i-1)!!}\langle\tau_{k+d_i}\prod\limits_{j\neq i}\tau_{d_j} \rangle_g\\&+\frac{1}{2}\underset{r+s=k-1}{\sum}(2r+1)!!(2s+1)!!\langle\tau_r\tau_s\prod\limits_{i=1}\limits^{n}\tau_{d_i} \rangle_{g-1}\\+\frac{1}{2}\underset{r+s=k-1}{\sum}&(2r+1)!!(2s+1)!!\sum_{\substack{g_1+g_2=g\\[3pt]I\sqcup J=\{1,...,n\}}}\langle\tau_{r}\tau_{d(I)} \rangle_{g_1}\langle\tau_{s}\tau_{d(J)} \rangle_{g_2}],\end{align*}
which is known as DVV formula \cite{dijkgraaf1991loop}.

Similarly, the following formula is derived from KdV equation \eqref{KdV}.
\begin{equation}\label{KdVrecursion}\langle\tau_{0}\tau_1\prod\limits_{i=1}\limits^{n}\tau_{d_i}\rangle_{g}^{\Theta}=\frac{1}{2}\sum\limits_{I\sqcup J=\{1,...n\}}\langle\tau_{0}^2\tau_{d(I)}\rangle_{g^{'}}^{\Theta}\langle\tau_{0}^2\tau_{d(J)}\rangle_{g-g^{'}}^{\Theta}+\frac{1}{12}\langle\tau_0^{4}\prod\limits_{i=1}\limits^{n}\tau_{d_i}\rangle_{g-1}^{\Theta}\end{equation}
The Virasoro constraints \eqref{VirasoroBGW} implies \eqref{Recursionpsi}. In particular, setting $k=0$ in \eqref{Recursionpsi}, we get the dilaton equation
\begin{equation}\label{dilaton}
\langle\tau_{0}\prod\limits_{i=1}\limits^{n}\tau_{d_i}
\rangle^{\Theta}_g
=(2g-2+n)\langle\prod\limits_{i=1}\limits^{n}\tau_{d_i}
\rangle^{\Theta}_g.\end{equation}

We have the following two closed formulae for intersection numbers.
The first one is due to Norbury \cite[Appendix]{norbury2023new}. The
second one is due to Bertola-Ruzza \cite[Corollary
1.2]{bertola2019brezin} and Dubrovin-Yang-Zagier \cite[Corollary
3]{dubrovin2021tau} with different proofs.

\begin{proposition}
\textup{(1)}
$\displaystyle\langle\tau_{0}^n\rangle^{\Theta}_1=\frac{(n-1)!}{8};$

\textup{(2)}
$\displaystyle\langle\tau_{g-1}\rangle^{\Theta}_g=\frac{(2g-1)!!^2}{8^gg!(2g-1)},\qquad\forall
g\geq 1.$
\end{proposition}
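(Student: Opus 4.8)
The two statements are of quite different natures, so I would handle them separately: part (1) is elementary and follows from the dilaton equation, while part (2) requires the integrable structure behind the $\Theta$-classes.

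For part (1), I would induct on $n$. Since $\dim\overline{\mathcal{M}}_{1,n}=n$ while $\deg\Theta_{1,n}=n$, the class $\Theta_{1,n}$ saturates the dimension and the only nonvanishing genus-one correlators are the $\langle\tau_0^n\rangle_1^{\Theta}$. Applying the dilaton equation \eqref{dilaton} with $g=1$ to the product $\tau_0\cdot\tau_0^{n-1}$ gives
\[
\langle\tau_0^n\rangle_1^{\Theta}=(2\cdot 1-2+(n-1))\langle\tau_0^{n-1}\rangle_1^{\Theta}=(n-1)\langle\tau_0^{n-1}\rangle_1^{\Theta},
\]
so iterating down to the initial value $\langle\tau_0\rangle_1^{\Theta}=\tfrac18$ yields $\langle\tau_0^n\rangle_1^{\Theta}=(n-1)!/8$. (One can cross-check this by reading off $F_1|_{t_{>0}=0}=-\tfrac18\log(1-t_0)$ from the genus-one part of the Brézin--Gross--Witten profile used below.)

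For part (2), I would pass through Theorem \ref{BGW-Theta}: since $Z^{\Theta}=Z^{BGW}$, the series $U=\hbar\,\partial_{t_0}^2\log Z^{\Theta}$ solves the KdV hierarchy \eqref{KdV} with the explicit initial profile $U|_{t_{>0}=0}=\tfrac{\hbar}{8}(1-t_0)^{-2}$. Setting $w:=(1-t_0)^{-1}$, so that $U_0=\tfrac{\hbar}{8}w^2$ and $\partial_{t_0}w^m=m\,w^{m+1}$, the crucial observation is that the Gelfand--Dickey profile stays monomial: I would prove by induction, using the recursion for $R_{n+1}$ in \eqref{KdV}, the closed form
\[
R_n[U_0]=c_n\,\hbar^n w^{2n},\qquad c_{n+1}=\frac{(2n+1)^2}{8(n+1)}\,c_n,\quad c_1=\tfrac18,
\]
whose solution is $c_n=\dfrac{(2n-1)!!^2}{8^n\,n!}$. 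The inductive step is a direct substitution: each of $R_nU_{t_0}$, $2U(R_n)'$, $\tfrac{\hbar}{4}(R_n)'''$ is proportional to $\hbar^{n+1}w^{2n+3}$, and their coefficients sum to $\tfrac{(2n+1)^3}{4}c_n$. It then remains to recover the correlator: the only nonvanishing single-insertion number at genus $g$ is $\langle\tau_{g-1}\rangle_g^{\Theta}$, so with $k=g-1$ I would apply the un-integrated flow identity $\hbar\,\partial_{t_0}^2\partial_{t_k}\log Z=\partial_{t_0}R_{k+1}[U]$ and extract its constant ($t_0^0$) coefficient at $t_{>0}=0$. The left side returns $\hbar^{k+1}\langle\tau_0^2\tau_k\rangle_{k+1}^{\Theta}$, which by two applications of \eqref{dilaton} equals $\hbar^{k+1}(2k+2)(2k+1)\langle\tau_k\rangle_{k+1}^{\Theta}$; the right side equals $(2k+2)c_{k+1}\hbar^{k+1}$. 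Equating gives $(2g-1)\langle\tau_{g-1}\rangle_g^{\Theta}=c_g$, i.e.\ the claimed $\dfrac{(2g-1)!!^2}{8^gg!(2g-1)}$.

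The main obstacle is relating the Gelfand--Dickey data back to the correlator without losing information to an integration constant. If one integrates $U_{t_k}=\partial_{t_0}R_{k+1}$ once in $t_0$, the resulting identity $\hbar\,\partial_{t_0}\partial_{t_k}\log Z=R_{k+1}[U]+C_k(\hbar,t_1,\dots)$ carries an unknown $t_0$-independent term whose genus-$(k+1)$ part cannot be fixed by inspecting the very coefficient one wants to compute. The resolution is precisely to stay with the un-integrated relation and read off its $t_0^0$ coefficient, so that the constant never enters; the price is that one obtains $\langle\tau_0^2\tau_k\rangle_{k+1}^{\Theta}$ rather than $\langle\tau_k\rangle_{k+1}^{\Theta}$, which the dilaton equation then converts. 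Alongside this, one must keep the $\hbar$-grading straight, checking that $R_{k+1}[U_0]$ is homogeneous of weight $\hbar^{k+1}$ so that it feeds exactly the genus-$(k+1)$ contribution and no other genus contaminates the extraction.
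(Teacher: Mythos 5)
Your part (1) is exactly the paper's argument: iterate the dilaton equation \eqref{dilaton} down from $\langle\tau_0\rangle_1^{\Theta}=\tfrac18$. Your part (2) is correct but takes a genuinely different route. The paper proves (2) by induction on $g$: it evaluates $\langle\tau_0\tau_1\tau_{g-1}\rangle_g^{\Theta}$ in two ways --- once via the KdV consequence \eqref{KdVrecursion} (reducing everything to genus $g-1$) and once via the DVV-type recursion \eqref{Recursionpsi} combined with \eqref{dilaton} (producing $\langle\tau_{g-1}\rangle_g^{\Theta}$ plus genus $g-1$ terms) --- and equating the two yields the first-order recursion $\langle\tau_{g-1}\rangle_g^{\Theta}=\frac{(2g-1)(2g-3)}{8g}\langle\tau_{g-2}\rangle_{g-1}^{\Theta}$, which integrates to the closed form. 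You instead use the explicit Br\'ezin--Gross--Witten initial profile $U|_{t_{>0}=0}=\frac{\hbar}{8}(1-t_0)^{-2}$ (recorded in Section 2.1 but not used in the paper's proof), show the Gelfand--Dickey polynomials stay monomial on this profile with $c_n=\frac{(2n-1)!!^2}{8^n n!}$, and extract the one-point number from the un-integrated $k$-th flow at $t=0$; I have checked the coefficient $1+2n+2n(2n+1)(2n+2)=(2n+1)^3$, the dilaton bookkeeping $\langle\tau_0^2\tau_k\rangle_{k+1}^{\Theta}=(2k+2)(2k+1)\langle\tau_k\rangle_{k+1}^{\Theta}$, and the final identity $(2g-1)\langle\tau_{g-1}\rangle_g^{\Theta}=c_g$, and all are right. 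Your approach buys more --- the closed form of $R_n$ on the BGW locus is essentially the one-point statement of Bertola--Ruzza and Dubrovin--Yang--Zagier that the paper merely cites --- at the cost of invoking the explicit initial datum rather than only the two recursions already displayed. The one step you should make explicit is why no integration constant enters when you recover $R_{n+1}$ from $\partial_{t_0}R_{n+1}$ in your induction on $n$: this is fixed by the normalization that $R_{n+1}$ is a differential polynomial in $U$ with no constant term, so $R_{n+1}[U_0]$ is a polynomial in $w=(1-t_0)^{-1}$ every monomial of which is divisible by $w^2$, forcing the additive constant to vanish.
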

\begin{proof}
(1) is a simple corollary of \eqref{dilaton} and
$\langle\tau_{0}\rangle^{\Theta}_1=\frac{1}{8}$.

Now we prove (2) by induction on $g$. From \eqref{KdVrecursion} and
\eqref{dilaton}, we obtain the following
\begin{align*}\langle\tau_0\tau_1\tau_{g-1}\rangle^{\Theta}_g&=\langle\tau_0^2\tau_{g-2}\rangle^{\Theta}_{g-1}\langle\tau_0^2\rangle^{\Theta}_1+\frac{1}{12}\langle\tau_0^4\tau_{g-2}\rangle^{\Theta}_{g-1}\\&=\bigg(\frac{(2g-2)(2g-3)}{8}+\frac{2g(2g-1)(2g-2)(2g-3)}{12}\bigg)\langle\tau_{g-2}\rangle^{\Theta}_{g-1}.\end{align*}
From \eqref{Recursionpsi} and \eqref{dilaton}, we have
\begin{align*}\langle\tau_0\tau_1\tau_{g-1}\rangle^{\Theta}_g&=2g\langle\tau_1\tau_{g-1}\rangle^{\Theta}_g\\&=\frac{2g}{3}\times\bigg[(2g-1)(2g-3)\langle\tau_{g-1}\rangle^{\Theta}_g\\&\qquad\qquad+\bigg(\frac{(2g-2)(2g-3)}{2}+\frac{2g-3}{8}\bigg)\langle\tau_{g-2}\rangle^{\Theta}_{g-1}\bigg].\end{align*}
Therefore, $$\langle\tau_{g-1}\rangle^{\Theta}_g=\frac{(2g-1)(2g-3)}{8g}\langle\tau_{g-2}\rangle^{\Theta}_{g-1}.$$
Note that $\displaystyle\langle\tau_{0}\rangle^{\Theta}_1=\frac{1}{8}.$ Hence, (2) follows.
\end{proof}

In \cite{Huang}, the first author obtained a 2-point closed formula.
\begin{proposition}\textup{\cite{Huang}}
For integers $g\geq 1$ and $k\geq 0$ such that $2k\leq g-1$, one has
\begin{align*}\langle\tau_{k}\tau_{g-1-k} \rangle_{g}^\Theta=\frac{(2g-1)!!^2}{8^gg!}\cdot\frac{(2g-1)!!}{(2k+1)!!(2g-1-2k)!!}\cdot\sum\limits_{i=0}\limits^{k}\frac{g-2i}{g}\binom{g}{i}^4\binom{2g}{2i}^{-3}.\end{align*}
\end{proposition}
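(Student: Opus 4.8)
The plan is to reduce the two-point numbers $\langle\tau_k\tau_{g-1-k}\rangle_g^\Theta$ to the one-point numbers $\langle\tau_{g-1}\rangle_g^\Theta=\frac{(2g-1)!!^2}{8^gg!(2g-1)}$ established above, and then to identify the resulting closed expression with the claimed one. Write $S(g,k):=\sum_{i=0}^{k}\frac{g-2i}{g}\binom{g}{i}^4\binom{2g}{2i}^{-3}$ for the displayed sum. The boundary case $k=0$ is immediate from the dilaton equation \eqref{dilaton}: $\langle\tau_0\tau_{g-1}\rangle_g^\Theta=(2g-1)\langle\tau_{g-1}\rangle_g^\Theta=\frac{(2g-1)!!^2}{8^gg!}$, which matches the right-hand side since $S(g,0)=1$. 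This pins down the initial value for any recursion in $k$ at fixed $g$.

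For general $k$ the first attempt is to specialize the DVV-type recursion \eqref{Recursionpsi} with distinguished index $\tau_k$ and the single remaining insertion $\tau_{g-1-k}$. Three kinds of terms appear. The leading term is the known multiple $\frac{1}{(2k+1)!!}\frac{(2g-1)!!}{(2g-2k-3)!!}\langle\tau_{g-1}\rangle_g^\Theta$ of the one-point number. The disconnected term collapses, by the dimension constraint $\sum d_i=g-1$, to a sum of products of a lower-genus two-point number and a one-point number, all controlled by an induction on $g$. The obstruction is the genus-lowering term $\frac12\sum_{r+s=k-1}(2r+1)!!(2s+1)!!\langle\tau_r\tau_s\tau_{g-1-k}\rangle_{g-1}^\Theta$, which is a genuine three-point number; reapplying \eqref{Recursionpsi} only increases the number of marked points, so a pure two-point induction does not close. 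This is the \emph{main obstacle}.

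To get around it I would use the identification $Z^\Theta=Z^{BGW}$ from \eqref{BGW} together with the explicit matrix-resolvent description of the connected two-point function of a KdV tau function, due to Bertola--Ruzza \cite{bertola2019brezin} and Dubrovin--Yang--Zagier \cite{dubrovin2021tau} --- the same input that produces the one-point formula. Restricting the BGW resolvent $M(\lambda)$ to $\mathbf t=0$, the connected two-point generating series is $\frac{\operatorname{tr}\big(M(\lambda)M(\mu)\big)-1}{(\lambda-\mu)^2}$ up to the standard normalization by double factorials; expanding this with the explicit BGW resolvent and reading off the coefficient of the relevant monomial yields $\langle\tau_k\tau_{g-1-k}\rangle_g^\Theta$ in closed form. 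The genuinely hard step, which I expect to dominate the work, is then a hypergeometric identity: one must show that the coefficient extracted from the resolvent expansion equals $\frac{(2g-1)!!^2}{8^gg!}\cdot\frac{(2g-1)!!}{(2k+1)!!(2g-1-2k)!!}\,S(g,k)$.

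Finally, the telescoping structure of $S(g,k)$, namely $S(g,k)-S(g,k-1)=\frac{g-2k}{g}\binom{g}{k}^4\binom{2g}{2k}^{-3}$, both serves as a cross-check and suggests a cleaner route: after dividing by the factor $\frac{(2g-1)!!}{(2k+1)!!(2g-1-2k)!!}$, the number $\langle\tau_k\tau_{g-1-k}\rangle_g^\Theta$ should satisfy a first-order inhomogeneous recursion in $k$ at fixed $g$. I would try to derive exactly this recursion directly, seeking a combination of \eqref{Recursionpsi} and \eqref{KdVrecursion} for consecutive values of $k$ in which the three-point contributions cancel, and then solve it using the initial value $\langle\tau_0\tau_{g-1}\rangle_g^\Theta$ above; checking that the inhomogeneous term reproduces $\frac{(2g-1)!!^2}{8^gg!}\cdot\frac{g-2k}{g}\binom{g}{k}^4\binom{2g}{2k}^{-3}$ is, once more, the crux.
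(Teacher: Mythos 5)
The paper does not actually prove this proposition: it is quoted verbatim from the first author's paper in preparation \cite{Huang}, and the only thing the present paper proves in this vicinity is the one-point formula $\langle\tau_{g-1}\rangle_g^\Theta=\frac{(2g-1)!!^2}{8^gg!(2g-1)}$ (via \eqref{KdVrecursion} and \eqref{dilaton}). So there is no internal proof to measure you against, and your submission has to stand on its own as a proof --- which, as written, it does not.

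What you have actually established is only the case $k=0$: your dilaton-equation computation $\langle\tau_0\tau_{g-1}\rangle_g^\Theta=(2g-1)\langle\tau_{g-1}\rangle_g^\Theta=\frac{(2g-1)!!^2}{8^gg!}$ is correct and matches $S(g,0)=1$. Your diagnosis of why a naive induction fails is also correct: the genus-lowering term of \eqref{Recursionpsi} produces genuine three-point numbers $\langle\tau_r\tau_s\tau_{g-1-k}\rangle_{g-1}^\Theta$, so the recursion does not close on two-point data. But the two escape routes you propose are both left unexecuted at exactly the point where the content lies. In the resolvent route, you would need to write down the explicit BGW resolvent at $\mathbf t=0$, expand $\frac{\operatorname{tr}(M(\lambda)M(\mu))-1}{(\lambda-\mu)^2}$, extract the coefficient of $\lambda^{-k-3/2}\mu^{-(g-1-k)-3/2}$ (with the correct double-factorial normalization relating resolvent coefficients to $\langle\tau_k\tau_{g-1-k}\rangle_g^\Theta$), and then prove the hypergeometric identity equating that coefficient to $\frac{(2g-1)!!^2}{8^gg!}\cdot\frac{(2g-1)!!}{(2k+1)!!(2g-1-2k)!!}S(g,k)$; you label this ``the crux'' and stop. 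In the alternative route, you conjecture the existence of a first-order recursion in $k$ whose inhomogeneous term is $\frac{(2g-1)!!^2}{8^gg!}\cdot\frac{g-2k}{g}\binom{g}{k}^4\binom{2g}{2k}^{-3}$, but you neither derive such a recursion nor exhibit the cancellation of three-point contributions that it would require. The strategy (use $Z^\Theta=Z^{BGW}$ from \eqref{BGW} together with the Bertola--Ruzza/Dubrovin--Yang--Zagier two-point machinery) is the natural one, and is almost certainly in the spirit of what \cite{Huang} does, but a plan that defers its only nontrivial step is not a proof. To complete it you must carry out the coefficient extraction and verify the resulting binomial/hypergeometric identity, or at minimum reduce the claim to a precisely stated identity and prove that identity.
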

For example, $$\langle\tau_{1}\tau_{1} \rangle_{3}^\Theta=\frac{63}{512},\quad \langle\tau_{2}\tau_{3} \rangle_{6}^\Theta=\frac{7949025}{2097152},\quad \langle\tau_{4}\tau_{4} \rangle_{9}^\Theta=\frac{8093029715505}{8589934592}.$$

\vskip 20pt
\section{Proof of Theorems \ref{RecursionA} and \ref{RecursionB}.}
\setcounter{equation}{0}
We need the following elementary lemma.
\begin{lemma}\textup{\cite[Lemma 2.1]{liu2009recursion}}
Let $F(\mathbf{L},n)$ and $G(\mathbf{L},n)$ be two functions defined on $N^\infty\times\mathbb{N},$ where $\mathbb{N}$ is the set of nonnegative integers. Let $\alpha_{\mathbf{L}}$ and $\beta_{\mathbf{L}}$ be real numbers depending only on $\mathbf{L}\in N^\infty$ that satisfy $\alpha_{\mathbf{0}}\beta_{\mathbf{0}}=1$ and $$\sum\limits_{\mathbf{L}+\mathbf{L^{'}}=\mathbf{b}}\alpha_{\mathbf{L}}\beta_{\mathbf{L^{'}}}=0,\qquad \mathbf{b}\neq\mathbf{0}.$$ Then the following two identities are equivalent:
$$\setlength{\abovedisplayskip}{3pt}
\setlength{\belowdisplayskip}{3pt}G(m,n)=\sum\limits_{\mathbf{L}+\mathbf{L^{'}}=\mathbf{b}}\alpha_\mathbf{L} F(\mathbf{L^{'}},n+|\mathbf{L}|),\qquad\forall (\mathbf{b},n)\in N^\infty\times\mathbb{N},$$
$$\setlength{\abovedisplayskip}{3pt}
\setlength{\belowdisplayskip}{3pt}F(m,n)=\sum\limits_{\mathbf{L}+\mathbf{L^{'}}=\mathbf{b}}\alpha_\mathbf{L} G(\mathbf{L^{'}},n+|\mathbf{L}|),\qquad\forall (\mathbf{b},n)\in N^\infty\times\mathbb{N}.$$
\end{lemma}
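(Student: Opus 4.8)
The plan is to recognize the two displayed relations as inverse to one another under convolution in the semigroup algebra of $N^\infty$, the pair $(\alpha_{\mathbf{L}})$ and $(\beta_{\mathbf{L}})$ being mutually inverse by the hypothesis $\alpha_{\mathbf{0}}\beta_{\mathbf{0}}=1$ together with $\sum_{\mathbf{L}+\mathbf{L}'=\mathbf{b}}\alpha_{\mathbf{L}}\beta_{\mathbf{L}'}=0$ for $\mathbf{b}\neq\mathbf{0}$. Note first that for each fixed $\mathbf{b}\in N^\infty$ there are only finitely many decompositions $\mathbf{L}+\mathbf{L}'=\mathbf{b}$ (in each coordinate $i$ one has $0\le L(i)\le b(i)$, and $b(i)=0$ for large $i$), so every sum below is finite and all rearrangements are legitimate. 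The integer $n$ plays no active role and is merely carried along, shifted by the additive weight $|\cdot|$; here the inverse direction is $F(\mathbf{b},n)=\sum_{\mathbf{L}+\mathbf{L}'=\mathbf{b}}\beta_{\mathbf{L}}G(\mathbf{L}',n+|\mathbf{L}|)$, built from the companion sequence $\beta$.

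First I would assume the relation $G(\mathbf{b},n)=\sum_{\mathbf{L}+\mathbf{L}'=\mathbf{b}}\alpha_{\mathbf{L}}F(\mathbf{L}',n+|\mathbf{L}|)$ and substitute it into the right-hand side of the claimed inverse. This gives a double sum
$$\sum_{\mathbf{L}+\mathbf{L}'=\mathbf{b}}\beta_{\mathbf{L}}\,G(\mathbf{L}',n+|\mathbf{L}|)=\sum_{\mathbf{L}+\mathbf{L}'=\mathbf{b}}\ \sum_{\mathbf{M}+\mathbf{M}'=\mathbf{L}'}\beta_{\mathbf{L}}\,\alpha_{\mathbf{M}}\,F\!\left(\mathbf{M}',\,n+|\mathbf{L}|+|\mathbf{M}|\right),$$
where I used that $G$ evaluated at $n+|\mathbf{L}|$ expands with its own shift $|\mathbf{M}|$, and that $|\mathbf{L}|+|\mathbf{M}|$ is exactly the shift attached to $F(\mathbf{M}',\cdot)$ because $|\cdot|$ is additive. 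Reindexing by $\mathbf{K}:=\mathbf{L}+\mathbf{M}$ (so that $\mathbf{M}'=\mathbf{b}-\mathbf{K}$ and $|\mathbf{L}|+|\mathbf{M}|=|\mathbf{K}|$) collects the sum as
$$\sum_{\mathbf{K}+\mathbf{M}'=\mathbf{b}}\left(\sum_{\mathbf{L}+\mathbf{M}=\mathbf{K}}\beta_{\mathbf{L}}\alpha_{\mathbf{M}}\right)F(\mathbf{M}',n+|\mathbf{K}|).$$
By the hypothesis the inner convolution equals $\delta_{\mathbf{K},\mathbf{0}}$, so only the term $\mathbf{K}=\mathbf{0}$ survives and the whole expression collapses to $F(\mathbf{b},n)$, establishing one implication.

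For the converse I would run the identical computation with the roles of $\alpha$ and $\beta$ interchanged and those of $F$ and $G$ interchanged; this is legitimate because the defining convolution $\sum_{\mathbf{L}+\mathbf{L}'=\mathbf{b}}\alpha_{\mathbf{L}}\beta_{\mathbf{L}'}=0$ is symmetric in $\alpha$ and $\beta$ (addition in $N^\infty$ is commutative), so $(\beta_{\mathbf{L}})$ is equally the convolution inverse of $(\alpha_{\mathbf{L}})$. Hence assuming the $F$-relation and substituting into the $G$-relation returns $G(\mathbf{b},n)$ by the same collapse.

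The argument is entirely formal, so there is no serious obstacle; the only point demanding care is the composition of the index shifts, namely confirming that $n\mapsto n+|\mathbf{L}|$ followed by $n\mapsto n+|\mathbf{M}|$ produces precisely $n+|\mathbf{K}|$ with $\mathbf{K}=\mathbf{L}+\mathbf{M}$, which rests on the additivity $|\mathbf{L}+\mathbf{M}|=|\mathbf{L}|+|\mathbf{M}|$, together with checking that $(\mathbf{L},\mathbf{M},\mathbf{M}')\mapsto(\mathbf{K},\mathbf{M}')$ correctly re-partitions the index set of the double sum into triple decompositions of $\mathbf{b}$ grouped by $\mathbf{K}$. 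Both are immediate.
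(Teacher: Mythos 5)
Your argument is correct and is essentially the paper's own proof: substitute one relation into the convolution with the companion sequence, regroup the double sum by $\mathbf{K}=\mathbf{L}+\mathbf{M}$ using the additivity of $|\cdot|$, and collapse via $\sum_{\mathbf{L}+\mathbf{M}=\mathbf{K}}\beta_{\mathbf{L}}\alpha_{\mathbf{M}}=\delta_{\mathbf{K},\mathbf{0}}$, with the converse following by the $\alpha\leftrightarrow\beta$ symmetry. You also correctly read the second displayed identity as carrying the coefficients $\beta_{\mathbf{L}}$ rather than $\alpha_{\mathbf{L}}$ (a typo in the statement as printed), which is precisely the version the paper's proof establishes and the one used later in the proof of Theorem 1.5.
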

\begin{proof}
Assume the first identity holds, then we have
\begin{align*}\sum\limits_{\mathbf{a}=\mathbf{0}}\limits^{\mathbf{b}}\beta_\mathbf{a} G(\mathbf{b}-\mathbf{a},n+|\mathbf{a}|)&=\sum\limits_{\mathbf{a}=\mathbf{0}}\limits^{\mathbf{b}}\beta_\mathbf{a}\sum\limits_{\mathbf{a^{'}}=\mathbf{0}}\limits^{\mathbf{b}-\mathbf{a}}\alpha_\mathbf{a^{'}} F(\mathbf{b}-\mathbf{a}-\mathbf{a^{'}},n+|\mathbf{a}+\mathbf{a^{'}}|)\\&=\sum\limits_{\mathbf{L}=\mathbf{0}}\limits^{\mathbf{b}}\ \sum\limits_{\mathbf{a}+\mathbf{a^{'}}=\mathbf{b}}(\beta_\mathbf{a}\alpha_\mathbf{a^{'}})F(\mathbf{b}-\mathbf{L},n+|\mathbf{L}|)\\&=\sum\limits_{\mathbf{L}=\mathbf{0}}\limits^{\mathbf{b}}\bm{\delta}_{\mathbf{L},\mathbf{0}}F(\mathbf{b}-\mathbf{L},n+|\mathbf{L}|)\\&=F(\mathbf{b},n).
\end{align*}
Hence, the second identity is derived. The proof of the other direction is the same.
\end{proof}

It is well-known that the mixture of $\kappa$ classes and $\psi$ classes can be reduced to pure $\psi$ classes. Kaufman-Manin-Zagier \cite{kaufmann1996higher} derived the following relation via projection formula
$$\kappa(\mathbf{b})\prod\limits_{i=1}\limits^{n}\tau_{d_i}=\pi_\ast\bigg(\sum\limits_{k=1}\limits^{||\mathbf{b}||}\frac{(-1)^{||\mathbf{b}||-k}}{k!}\sum_{\substack{\mathbf{m_1}+\cdots+\mathbf{m_k}=\mathbf{b}\\[3pt]\mathbf{m_i}\neq\mathbf{0}}}\binom{\mathbf{b}}{\mathbf{m_1},...,\mathbf{m_k}}\prod\limits_{i=1}\limits^{n}\tau_{d_i}\prod\limits_{j=1}\limits^{k}\tau_{|\mathbf{m_j}|+1}\bigg),$$
where $\pi: \overline{\mathcal{M}}_{g,n+k}\to\overline{\mathcal{M}}_{g,n}$ is the morphism which forgets the last $k$ marked points. If we add $\Theta$ class into both sides above, the analogous relation also holds. See \cite[Page 58]{norbury2020enumerative}.
\begin{lemma}\textup{\cite{norbury2020enumerative}}
$$\Theta_{g,n}\kappa(\mathbf{b})\prod\limits_{i=1}\limits^{n}\tau_{d_i}=\pi_\ast\bigg(\sum\limits_{k=1}\limits^{||\mathbf{b}||}\frac{(-1)^{||\mathbf{b}||-k}}{k!}\Theta_{g,n+k}\sum_{\substack{\mathbf{m_1}+\cdots+\mathbf{m_k}=\mathbf{b}\\[3pt]\mathbf{m_i}\neq\mathbf{0}}}\binom{\mathbf{b}}{\mathbf{m_1},...,\mathbf{m_k}}\prod\limits_{i=1}\limits^{n}\tau_{d_i}\prod\limits_{j=1}\limits^{k}\tau_{|\mathbf{m_j}|}\bigg).$$
\end{lemma}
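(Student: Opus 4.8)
The plan is to deduce this identity directly from the classical Kaufmann--Manin--Zagier relation recorded immediately above, using \emph{only} property (\rmnum{2}) of the $\Theta$ class, $\Theta_{g,n+1}=\psi_{n+1}\cdot\pi^{\ast}\Theta_{g,n}$. The two formulas are identical in their combinatorial shape (the sum over ordered decompositions $\mathbf{m_1}+\cdots+\mathbf{m_k}=\mathbf{b}$, the multinomials, and the sign $(-1)^{||\mathbf{b}||-k}/k!$); they differ only in that each new marked point carries $\tau_{|\mathbf{m_j}|+1}$ classically but $\tau_{|\mathbf{m_j}|}$ here. I will explain this shift by one as the effect of the $\Theta$ class absorbing exactly one power of $\psi$ at each forgotten point, so no new combinatorics is needed.

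First I would establish the iterated form of property (\rmnum{2}). Writing $\pi_k\colon\overline{\mathcal{M}}_{g,n+k}\to\overline{\mathcal{M}}_{g,n}$ for the map forgetting the last $k$ points, I claim
\begin{equation*}
\Theta_{g,n+k}=\Big(\prod_{j=1}^{k}\psi_{n+j}\Big)\,\pi_k^{\ast}\Theta_{g,n}.
\end{equation*}
This is proved by induction on $k$: writing $\rho\colon\overline{\mathcal{M}}_{g,n+k}\to\overline{\mathcal{M}}_{g,n+k-1}$ for the map forgetting the point $n+k$ and applying (\rmnum{2}) together with the inductive hypothesis gives $\Theta_{g,n+k}=\psi_{n+k}\prod_{j=1}^{k-1}(\rho^{\ast}\psi_{n+j})\cdot\pi_k^{\ast}\Theta_{g,n}$. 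Invoking the standard comparison relation $\rho^{\ast}\psi_{n+j}=\psi_{n+j}-D_{n+j,n+k}$ and expanding the product, every term that actually contains a boundary divisor $D_{n+j,n+k}$ is killed by the leading factor $\psi_{n+k}$, since $\psi_{n+k}\cdot D_{n+j,n+k}=0$ on $\overline{\mathcal{M}}_{g,n+k}$ (the cotangent line at $n+k$ is trivial on the three-pointed rational bubble along $D_{n+j,n+k}$). What survives is exactly $\prod_{j=1}^{k}\psi_{n+j}\cdot\pi_k^{\ast}\Theta_{g,n}$.

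With this in hand the assembly is short. I multiply the classical formula by $\Theta_{g,n}$ and move the factor inside each pushforward by the projection formula, $\Theta_{g,n}\cdot\pi_{k,\ast}(Y)=\pi_{k,\ast}(\pi_k^{\ast}\Theta_{g,n}\cdot Y)$. For the summand with $Y=\prod_{i=1}^{n}\psi_i^{d_i}\prod_{j=1}^{k}\psi_{n+j}^{|\mathbf{m_j}|+1}$, I factor one copy of $\psi_{n+j}$ out of each $\psi_{n+j}^{|\mathbf{m_j}|+1}$ (legitimate since $\mathbf{m_j}\neq\mathbf{0}$ forces $|\mathbf{m_j}|\geq 1$) and absorb the resulting $\prod_{j}\psi_{n+j}\cdot\pi_k^{\ast}\Theta_{g,n}$ into $\Theta_{g,n+k}$ via the iterated relation of the previous paragraph; this turns each $\psi_{n+j}^{|\mathbf{m_j}|+1}$ into $\psi_{n+j}^{|\mathbf{m_j}|}$ and reproduces the right-hand side of the Lemma term by term. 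The main obstacle is precisely the iterated relation in the first step: one must check that the correction terms coming from $\rho^{\ast}\psi_{n+j}=\psi_{n+j}-D_{n+j,n+k}$ genuinely cancel, and everything hinges on the vanishing $\psi_{n+k}\cdot D_{n+j,n+k}=0$. Once that is in place, the remaining manipulation is formal and mirrors the classical derivation exactly.
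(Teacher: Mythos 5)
Your argument is correct. Note that the paper does not actually prove this lemma --- it only cites Norbury (``See [Page 58]'' of \cite{norbury2020enumerative}) after recalling the classical Kaufmann--Manin--Zagier relation --- so you are supplying a proof where the paper supplies none. Your route is the natural one and almost certainly the intended one: the iterated form $\Theta_{g,n+k}=\prod_{j=1}^{k}\psi_{n+j}\cdot\pi_k^{\ast}\Theta_{g,n}$ of property (\rmnum{2}) is exactly where the care is needed, and you handle it properly via the comparison $\rho^{\ast}\psi_{n+j}=\psi_{n+j}-D_{n+j,n+k}$ together with the vanishing $\psi_{n+k}\cdot D_{n+j,n+k}=0$; after that, the projection formula and peeling one $\psi_{n+j}$ off each $\psi_{n+j}^{|\mathbf{m_j}|+1}$ converts the classical formula into the $\Theta$-decorated one term by term, which cleanly explains the shift from $\tau_{|\mathbf{m_j}|+1}$ to $\tau_{|\mathbf{m_j}|}$. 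One small remark: the extraction of a single $\psi_{n+j}$ needs only $|\mathbf{m_j}|+1\geq 1$, so the appeal to $\mathbf{m_j}\neq\mathbf{0}$ there is harmless over-justification; and since the lemma is only ever used after pushing forward to intersection numbers, the usual ambiguity between $\psi_i$ and $\pi_k^{\ast}\psi_i$ for $i\leq n$ inside the pushforward is inherited from the classical statement and does not affect your reduction.
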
\noindent\textbf{Proof of Theorem \ref{RecursionA}.}
From Lemma 3.2 and \eqref{Recursionpsi}, we have
\begin{align*}&(2d_1+1)!!\langle\prod\limits_{d=1}\limits^{n}\tau_{d_i}\kappa(\mathbf{b})\rangle_{g}^{\Theta}\\&=(2d_1+1)!!\sum\limits_{k=1}\limits^{||\mathbf{b}||}\frac{(-1)^{||\mathbf{b}||-k}}{k!}\sum_{\substack{\mathbf{m_1}+\cdots+\mathbf{m_k}=\mathbf{b}\\[3pt]\mathbf{m_i}\neq\mathbf{0}}}\binom{\mathbf{b}}{\mathbf{m_1},...,\mathbf{m_k}}\langle\prod\limits_{i=1}\limits^{n}\tau_{d_i}\prod\limits_{j=1}\limits^{k}\tau_{|\mathbf{m_j}|}\rangle_{g}^{\Theta}\\&=\sum\limits_{k=1}\limits^{||\mathbf{b}||}\frac{(-1)^{||\mathbf{b}||-k}}{k!}\sum_{\substack{\mathbf{m_1}+\cdots+\mathbf{m_k}=\mathbf{b}\\[3pt]\mathbf{m_i}\neq\mathbf{0}}}\binom{\mathbf{b}}{\mathbf{m_1},...,\mathbf{m_k}}\\&\qquad\qquad\times\bigg(\sum\limits_{j=2}\limits^{n}\frac{(2d_1+2d_j+1)!!}{(2d_j-1)!!}\langle\tau_{d_1+d_j}\prod\limits_{i\neq 1,j}\tau_{d_j}\prod\limits_{i=1}\limits^{k}\tau_{|\mathbf{m_i}|} \rangle^{\Theta}_g\\&\qquad\qquad\qquad+\sum\limits_{j=1}\limits^{k}\frac{(2d_1+2m_j+1)!!}{(2m_j-1)!!}\langle\tau_{d_1+m_j}\prod\limits_{i=2}\limits^{n}\tau_{d_j}\prod\limits_{i\neq j}\tau_{|\mathbf{m_i}|} \rangle^{\Theta}_g\\&\qquad\qquad\qquad+\frac{1}{2}\underset{r+s=k_1-1}{\sum}(2r+1)!!(2s+1)!!\langle\tau_r\tau_s\prod\limits_{i=2}\limits^{n}\tau_{d_i}\prod\limits_{i=1}\limits^{k}\tau_{|\mathbf{m_i}|} \rangle^{\Theta}_{g-1}\\&+\frac{1}{2}\underset{r+s=k_1-1}{\sum}(2r+1)!!(2s+1)!!\sum_{\substack{I\sqcup J=\{2,...,n\}\\[3pt]I^{'}\sqcup J^{'}=\{1,...,k\}}}\langle\tau_{r}\tau_{d(I)} \prod\limits_{i\in I^{'}}\tau_{|\mathbf{m_i}|}\rangle^{\Theta}_{g^{'}}\langle\tau_{s}\tau_{d(I^{'})}\prod\limits_{j\in J^{'}}\tau_{|\mathbf{m_j}|} \rangle^{\Theta}_{g-g^{'}}\bigg)\\&=\sum\limits_{j=2}\limits^{n}\frac{(2d_1+2d_j+1)!!}{(2d_j-1)!!}\langle\kappa(\mathbf{b})\prod_{i\neq 1,j}\limits^{n}\tau_{d_i}\rangle_{g}^{\Theta}\\&\qquad+\frac{1}{2}\sum\limits_{r+s=d_1-1}(2r+1)!!(2s+1)!!\langle\kappa(\mathbf{b})\tau_{r}\tau_s\prod\limits_{i=2}\limits^{n}\tau_{d_i}\rangle_{g-1}^{\Theta}\\&+\frac{1}{2}\sum_{\substack{\mathbf{e}+\mathbf{f}=b\\[3pt]I\sqcup J=\{2,...,n\}}}\sum\limits_{r+s=d_1-1}\binom{\mathbf{b}}{\mathbf{e}}(2r+1)!!(2s+1)!!\langle\kappa(\mathbf{e})\tau_{r}\tau_{d(I)}\rangle_{g^{'}}^{\Theta}\langle\kappa(\mathbf{f})\tau_{s}\tau_{d(J)}\rangle_{g-g^{'}}^{\Theta}\\&+\sum\limits_{k=1}\limits^{||\mathbf{b}||}\frac{(-1)^{||\mathbf{b}||-k}}{k!}\sum_{\substack{\mathbf{m_1}+\cdots+\mathbf{m_k}=\mathbf{b}\\[3pt]\mathbf{m_i}\neq\mathbf{0}}}\binom{\mathbf{b}}{\mathbf{m_1},...,\mathbf{m_k}}\\&\qquad\qquad\qquad\qquad\qquad\times\sum\limits_{j=1}\limits^{k}\frac{(2d_1+2m_j+1)!!}{(2m_j-1)!!}\langle\tau_{d_1+m_j}\prod\limits_{i=2}\limits^{n}\tau_{d_j}\prod\limits_{i\neq j}\tau_{|\mathbf{m_i}|} \rangle^{\Theta}_g  \\&=RHS+\sum\limits_{k=0}\limits^{||\mathbf{b}||-1}\frac{(-1)^{||\mathbf{b}||-k-1}}{(k+1)!}\sum_{\substack{\mathbf{L}+\mathbf{L^{'}}=\mathbf{b}\\[3pt]\mathbf{L}\neq\mathbf{0}}}\sum_{\substack{\mathbf{m_1}+\cdots+\mathbf{m_k}=\mathbf{L^{'}}\\[3pt]\mathbf{m_i}\neq\mathbf{0}}}\binom{\mathbf{b}}{\mathbf{L}}\binom{\mathbf{L^{'}}}{\mathbf{m_1},...,\mathbf{m_k}}\\&\qquad\qquad\qquad\qquad\qquad\qquad\times(k+1)\frac{(2d_1+2|\mathbf{L}|+1)!!}{(2|\mathbf{L}|-1)!!}\langle\tau_{d_1+|\mathbf{L}|}\prod\limits_{j=2}\limits^{n}\tau_{d_j}\prod\limits_{i=1}\limits^{n}\tau_{|\mathbf{m_i}|} \rangle^{\Theta}_g  \\&=RHS-\sum_{\substack{\mathbf{L}+\mathbf{L^{'}}=\mathbf{b}\\[3pt]\mathbf{L}\neq\mathbf{0}}}(-1)^{||\mathbf{L}||}\binom{\mathbf{b}}{\mathbf{L}}\frac{(2d_1+2|\mathbf{L}|+1)!!}{(2|\mathbf{L}|-1)!!}\langle\kappa(\mathbf{L^{'}})\tau_{d_1+|\mathbf{L}|}\prod\limits_{j=2}\limits^{n}\tau_{d_j}\rangle_{g}^{\Theta}\\&=RHS-LHS+(2d_1+1)!!\langle\prod\limits_{d=1}\limits^{n}\tau_{d_i}\kappa(\mathbf{b})\rangle_{g}^{\Theta}\end{align*}
So we have RHS=LHS. \qed

We will prove Theorem \ref{RecursionB} from Theorem \ref{RecursionA} and Lemma 3.1.\\

\noindent\textbf{Proof of Theorem \ref{RecursionB}.}

Denote $$F(\mathbf{b},d_1)=\frac{(2d_1+1)!!}{\mathbf{b!}}\langle\kappa(\mathbf{b})\prod\limits_{i=1}\limits^{n}\tau_{d_i} \rangle^{\Theta}_g,$$
and
\begin{align*}G(\mathbf{b},k_1)&=\sum\limits_{j=2}\limits^{n}\frac{(2|\mathbf{b}|+2d_1+2d_j+1)!!}{\mathbf{b!}(2d_j-1)!!}\langle\kappa(\mathbf{b})\tau_{b+d_1+d_j}\prod_{i\neq 1,j}\tau_{d_i}\rangle_{g}^{\Theta}\\&\qquad+\frac{1}{2}\sum\limits_{r+s=d_1-1}\frac{(2r+1)!!(2s+1)!!}{\mathbf{b!}}\langle\kappa(\mathbf{b})\tau_{r}\tau_s\prod\limits_{i=2}\limits^{n}\tau_{d_i}\rangle_{g-1}^{\Theta}\\&+\frac{1}{2}\sum_{\substack{\mathbf{e}+\mathbf{f}=\mathbf{b}\\[3pt]I\sqcup J=\{2,...,n\}}}\sum\limits_{r+s=d_1-1}\frac{(2r+1)!!(2s+1)!!}{\mathbf{e!}\mathbf{f!}}\langle\kappa(\mathbf{e})\tau_{r}\tau_{d(I)}\rangle_{g^{'}}^{\Theta}\langle\kappa(\mathbf{f})\tau_{s}\tau_{d(J)}\rangle_{g-g^{'}}^{\Theta}.\end{align*}
By Theorem \ref{RecursionA}, we have $$\sum\limits_{\mathbf{L}+\mathbf{L^{'}}=\mathbf{b}}\frac{(-1)^{||\mathbf{L}||}}{\mathbf{L!}(2|\mathbf{L}|-1)!!}F(\mathbf{L^{'}},d_1+|\mathbf{L}|)=G(\mathbf{b},d_1),$$
then by Lemma 3.1
$$F(\mathbf{b},d_1)=\sum\limits_{\mathbf{L}+\mathbf{L^{'}}=\mathbf{b}}\frac{\alpha_{\mathbf{L}}}{\mathbf{L!}}G(\mathbf{L^{'}},d_1+|\mathbf{L}|).$$
Therefore, Theorem \ref{RecursionB} is proved. \qed

\vskip 20pt
\section{super Higher Weil-Petersson volumes}
By applying Lemma 3.2 as in the proof of Theorem \ref{RecursionA},
we may generalize recursions of $\Theta$ and pure $\psi$ classes to
recursions involving $\Theta$, $\psi$ and $\kappa$ classes. Here we
obtain the generalization of \eqref{KdVrecursion}.

\begin{proposition} For $\mathbf{b}\in N^\infty$, $g\geq 1$, $n\geq 0$ and $d_j\geq 0$,we have
\begin{align*}\langle\tau_{0}\tau_1\prod\limits_{i=1}\limits^{n}\tau_{d_i}\kappa(\mathbf{b})\rangle_{g}^{\Theta}=&\frac{1}{2}\sum_{\substack{\mathbf{e}+\mathbf{f}=\mathbf{b}\\[3pt]I\sqcup J=\underline{n}}}\binom{\mathbf{b}}{\mathbf{e}}\langle\tau_{0}^2\tau_{d(I)}\kappa(\mathbf{e})\rangle_{g^{'}}^{\Theta}\langle\tau_0^2\tau_{d(J)}\kappa(\mathbf{f})\rangle_{g-g^{'}}^{\Theta}\\&\qquad\qquad\qquad+\frac{1}{12}\langle\tau_0^{4}\prod\limits_{i=1}\limits^{n}\tau_{d_i}\kappa(\mathbf{b})\rangle_{g-1}^{\Theta},\end{align*}
where $\underline{n}=\{1,...,n\}$.
\end{proposition}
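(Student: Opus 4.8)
The plan is to derive this from the pure $\psi$-class KdV recursion \eqref{KdVrecursion} by exactly the same mechanism used in the proof of Theorem \ref{RecursionA}, namely by converting $\kappa$ classes to $\psi$ classes via Lemma 3.2, applying the known recursion, and then converting back. The target identity \eqref{KdVrecursion} reads, before insertion of any $\kappa$ class,
\begin{equation*}
\langle\tau_0\tau_1\textstyle\prod_{i=1}^n\tau_{d_i}\rangle_g^{\Theta}
=\tfrac12\sum_{I\sqcup J=\underline{n}}\langle\tau_0^2\tau_{d(I)}\rangle_{g'}^{\Theta}\langle\tau_0^2\tau_{d(J)}\rangle_{g-g'}^{\Theta}
+\tfrac1{12}\langle\tau_0^4\textstyle\prod_{i=1}^n\tau_{d_i}\rangle_{g-1}^{\Theta}.
\end{equation*}
First I would apply Lemma 3.2 to the left-hand side $\langle\tau_0\tau_1\prod_i\tau_{d_i}\kappa(\mathbf{b})\rangle_g^{\Theta}$, writing it as the signed alternating sum over partitions $\mathbf{m_1}+\cdots+\mathbf{m_k}=\mathbf{b}$ of pure $\psi$-class correlators with $k$ extra insertions $\tau_{|\mathbf{m_j}|}$. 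Crucially, the marked points carrying $\tau_0$ and $\tau_1$ are the distinguished points in \eqref{KdVrecursion}, not among the points being forgotten, so the structure of the recursion is preserved under the insertion.

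The key observation that makes this case simpler than Theorem \ref{RecursionA} is that \eqref{KdVrecursion} has \emph{no} term in which the distinguished indices $0$ or $1$ get combined with another marked point's index (there is no ``$\tau_{k+d_i}$''-type string-contraction term). Consequently, when I apply \eqref{KdVrecursion} to each pure-$\psi$ correlator and then regroup, the extra insertions $\tau_{|\mathbf{m_j}|}$ simply ride along passively: each one is distributed among the factors in the splitting term or absorbed into the single lower-genus term, and every such distribution reassembles, via Lemma 3.2 applied in reverse, into a $\kappa$-class correlator. The splitting term produces $\tau_0^2\tau_{d(I)}\kappa(\mathbf{e})$ and $\tau_0^2\tau_{d(J)}\kappa(\mathbf{f})$ with $\mathbf{e}+\mathbf{f}=\mathbf{b}$, and the multinomial coefficient $\binom{\mathbf{b}}{\mathbf{e}}$ arises exactly as in the last line of the proof of Theorem \ref{RecursionA} from the bookkeeping of which $\mathbf{m_j}$'s land in which factor. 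The non-separating term reassembles directly into $\tfrac1{12}\langle\tau_0^4\prod_i\tau_{d_i}\kappa(\mathbf{b})\rangle_{g-1}^{\Theta}$ with no extra coefficient.

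The main obstacle, and the only place requiring care, is the combinatorial reassembly: I must check that after distributing the $k$ forgotten points across the two factors of the separating term and summing over all $k$ and all ordered partitions, the signs $(-1)^{||\mathbf{b}||-k}$, the factorials $1/k!$, and the multinomials $\binom{\mathbf{b}}{\mathbf{m_1},\dots,\mathbf{m_k}}$ recombine precisely into $\binom{\mathbf{b}}{\mathbf{e}}$ times the product of two $\kappa$-correlators given by Lemma 3.2 for each factor. This is the standard identity
\begin{equation*}
\sum_{\mathbf{e}+\mathbf{f}=\mathbf{b}}\binom{\mathbf{b}}{\mathbf{e}}\,\kappa(\mathbf{e})\otimes\kappa(\mathbf{f})
\end{equation*}
expressing the comultiplication of the $\kappa$-generating operation, and it follows from the same index-splitting argument (absence of a diagonal/self-contraction among forgotten points because the $\tau_0^2$ insertions are distinguished) used in the separating term of Theorem \ref{RecursionA}. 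Since there is no string-contraction term to contend with here, no analogue of the delicate ``$RHS-LHS$'' cancellation at the end of that proof is needed; the identity drops out directly once the reassembly is verified. I would therefore present the proof as a short remark that it is formally identical to the proof of Theorem \ref{RecursionA} applied to \eqref{KdVrecursion} in place of \eqref{Recursionpsi}, carrying out only the reassembly of the separating term explicitly.
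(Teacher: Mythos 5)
Your proposal is correct and follows exactly the route the paper intends: the paper gives no written proof of this proposition, stating only that it follows ``by applying Lemma 3.2 as in the proof of Theorem \ref{RecursionA},'' which is precisely your argument of converting $\kappa$ to $\psi$ classes, applying \eqref{KdVrecursion}, and reassembling via the multinomial splitting $\binom{\mathbf{b}}{\mathbf{m_1},\dots,\mathbf{m_k}}=\binom{\mathbf{b}}{\mathbf{e}}\binom{\mathbf{e}}{\cdot}\binom{\mathbf{f}}{\cdot}$ together with $\tfrac{1}{k!}\binom{k}{k_1}=\tfrac{1}{k_1!k_2!}$. Your observation that the absence of a string-contraction term in \eqref{KdVrecursion} removes the need for the final cancellation step of Theorem \ref{RecursionA}'s proof is also accurate.
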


Moreover, the following is a generalization of the dilaton equation \eqref{dilaton}.

\begin{proposition} For $\mathbf{b}\in N^\infty$, $n\geq 0$, $d_j\geq 0$ and $g\geq 1$,
$$\sum\limits_{\mathbf{L}+\mathbf{L^{'}}=\mathbf{b}}(-1)^{||\mathbf{L}||}\binom{\mathbf{b}}{\mathbf{L}}\langle\tau_{|\mathbf{L}|}\prod\limits_{i=1}\limits^{n}\tau_{d_i}\kappa(\mathbf{L^{'}}) \rangle_{g}^{\Theta}=(2g-2+n)\langle\prod\limits_{i=1}\limits^{n}\tau_{d_i}\kappa(\mathbf{b}) \rangle_{g}^{\Theta}$$
\end{proposition}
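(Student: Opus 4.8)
The plan is to prove the identity by a direct pushforward computation on $\overline{\mathcal{M}}_{g,n+1}$, reading the alternating sum over $\mathbf{L}$ as a M\"obius inversion of the Kaufman--Manin--Zagier type relation for $\kappa$ classes, rather than expanding everything into pure $\psi$ integrals via Lemma 3.2. Write $\pi\colon\overline{\mathcal{M}}_{g,n+1}\to\overline{\mathcal{M}}_{g,n}$ for the map forgetting the last point, and set $\psi=\psi_{n+1}$, letting $\tau_{|\mathbf{L}|}$ be the insertion at this forgotten point. Then each summand on the left-hand side is $\int_{\overline{\mathcal{M}}_{g,n+1}}\Theta_{g,n+1}\,\psi^{|\mathbf{L}|}\prod_{i=1}^n\psi_i^{d_i}\,\kappa(\mathbf{L'})$. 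The first step is to apply property (ii), namely $\Theta_{g,n+1}=\psi\cdot\pi^{\ast}\Theta_{g,n}$, which produces one distinguished factor of $\psi$ and turns the integrand into $\psi^{|\mathbf{L}|+1}\,\pi^{\ast}\Theta_{g,n}\prod_i\psi_i^{d_i}\,\kappa(\mathbf{L'})$.

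The key step is to recognize the whole $\mathbf{L}$-sum as a single pulled-back class. Using the Arbarello--Cornalba comparison $\pi^{\ast}\kappa_a=\kappa_a-\psi^a$ on $\overline{\mathcal{M}}_{g,n+1}$, valid for the kappa classes defined through $\omega_{\pi}(\sum D_i)$ in the introduction, a multinomial expansion gives
\[
\pi^{\ast}\kappa(\mathbf{b})=\prod_{a\geq 1}(\kappa_a-\psi^a)^{b(a)}=\sum_{\mathbf{L}+\mathbf{L'}=\mathbf{b}}(-1)^{\|\mathbf{L}\|}\binom{\mathbf{b}}{\mathbf{L}}\psi^{|\mathbf{L}|}\kappa(\mathbf{L'}),
\]
since $\prod_a(-1)^{L(a)}=(-1)^{\|\mathbf{L}\|}$ and $\psi^{\sum_a aL(a)}=\psi^{|\mathbf{L}|}$. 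After factoring out one $\psi$ from each summand of the left-hand side, the remaining combination $\sum_{\mathbf{L}+\mathbf{L'}=\mathbf{b}}(-1)^{\|\mathbf{L}\|}\binom{\mathbf{b}}{\mathbf{L}}\psi^{|\mathbf{L}|}\kappa(\mathbf{L'})$ is exactly $\pi^{\ast}\kappa(\mathbf{b})$, so the entire left-hand side collapses to $\int_{\overline{\mathcal{M}}_{g,n+1}}\psi\cdot\pi^{\ast}\Theta_{g,n}\cdot\prod_i\psi_i^{d_i}\cdot\pi^{\ast}\kappa(\mathbf{b})$.

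It remains to convert $\prod_i\psi_i^{d_i}$ into a pullback and push forward. Here I would use $\psi_i=\pi^{\ast}\psi_i+D_i$ together with $\psi\cdot D_i=0$, the latter because the restriction of $\psi$ to the boundary divisor $D_i$ vanishes when the forgotten point sits on a three-pointed rational bubble. Multiplying by the distinguished factor $\psi$ already present annihilates every term carrying some $D_i$, so $\psi\prod_i\psi_i^{d_i}=\psi\,\pi^{\ast}\big(\prod_i\psi_i^{d_i}\big)$ and the integrand becomes $\psi\cdot\pi^{\ast}\big(\Theta_{g,n}\prod_i\psi_i^{d_i}\kappa(\mathbf{b})\big)$. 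Then the projection formula with $\pi_{\ast}\psi=\kappa_0=2g-2+n$ yields $(2g-2+n)\langle\prod_i\tau_{d_i}\kappa(\mathbf{b})\rangle_g^{\Theta}$, which is the right-hand side. Specializing $\mathbf{b}=\mathbf{0}$ recovers the dilaton equation \eqref{dilaton}, while $n=0$ gives the formula for $V_g(\kappa(\mathbf{b}))$ quoted in the introduction.

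The main obstacle is the bookkeeping separating the intrinsic classes $\psi_i,\kappa_a$ on $\overline{\mathcal{M}}_{g,n+1}$ from their pullbacks from $\overline{\mathcal{M}}_{g,n}$: one must invoke the correct comparison formulas and verify that each correction term, a multiple of some $D_i$, is killed by the factor $\psi$ arising from property (ii) of $\Theta$. Once this is arranged, the computation is forced. An alternative, parallel to the proof of Proposition 4.1, is to expand both sides into pure $\psi$ integrals through Lemma 3.2 and apply the ordinary dilaton equation term by term; this works as well, but the resulting multinomial sums are markedly heavier than the geometric argument above.
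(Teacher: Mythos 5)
Your proof is correct, and it takes a genuinely different route from the paper. The paper's argument expands $\langle\tau_0\prod_i\tau_{d_i}\kappa(\mathbf{b})\rangle_g^{\Theta}$ into pure $\psi$-class integrals via Lemma 3.2 (the Kaufmann--Manin--Zagier-type formula with $\Theta$ inserted), applies the dilaton equation \eqref{dilaton} term by term, and then isolates the correction terms coming from the forgotten point hitting one of the $\kappa$-insertions; this is exactly the ``heavier multinomial'' alternative you mention at the end, and it depends on \eqref{dilaton}, which the paper obtains from the Virasoro constraints for $Z^{BGW}$. Your argument instead works directly on $\overline{\mathcal{M}}_{g,n+1}$: property (ii) of $\Theta$, the comparison formulas $\pi^{\ast}\kappa_a=\kappa_a-\psi_{n+1}^{a}$ and $\psi_i=\pi^{\ast}\psi_i+D_i$ with $\psi_{n+1}\cdot D_i=0$, and the projection formula with $\pi_{\ast}\psi_{n+1}=\kappa_0=2g-2+n$. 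The identification of the alternating $\mathbf{L}$-sum with $\pi^{\ast}\kappa(\mathbf{b})$ is exactly right, and the payoff is that your proof is self-contained intersection theory (it even re-derives \eqref{dilaton} as the case $\mathbf{b}=\mathbf{0}$ rather than assuming it), whereas the paper's proof is purely combinatorial once Lemma 3.2 and \eqref{dilaton} are granted. The only points you should make explicit are the standard facts you lean on implicitly: $c_1(\omega_{\pi}(\sum D_i))=\psi_{n+1}$ on $\overline{\mathcal{M}}_{g,n+1}$ (so that $\pi_{\ast}\psi_{n+1}$ really is the $\kappa_0$ of the introduction's definition) and $D_iD_j=0$ for $i\neq j$, both of which are in Arbarello--Cornalba and consistent with the paper's conventions.
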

\begin{proof}
By dilaton equation \eqref{dilaton},
\begin{align*}
\langle\tau_{0}\prod\limits_{i=1}\limits^{n}\tau_{d_i}\kappa(\mathbf{b}) \rangle_{g}^{\Theta}&=\sum\limits_{k=1}\limits^{||\mathbf{b}||}\frac{(-1)^{||\mathbf{b}||-k}}{k!}\sum_{\substack{\mathbf{m_1}+\cdots+\mathbf{m_k}=\mathbf{b}\\[3pt]\mathbf{m_i}\neq\mathbf{0}}}\binom{\mathbf{b}}{\mathbf{m_1},...,\mathbf{m_k}}\langle\tau_0\prod\limits_{i=1}\limits^{n}\tau_{d_i}\prod\limits_{j=1}\limits^{k}\tau_{|\mathbf{m_j}|}\rangle_{g}^{\Theta}\\&=(2g-2+n)\langle\prod\limits_{i=1}\limits^{n}\tau_{d_i}\kappa(\mathbf{b}) \rangle_{g}^{\Theta}+\sum\limits_{k=0}\limits^{||\mathbf{b}||-1}\frac{(-1)^{||\mathbf{b}||-k-1}}{k!}\\&\qquad\qquad\times\sum_{\substack{\mathbf{L}+\mathbf{m_1}+\cdots+\mathbf{m_n}=\mathbf{b}\\[3pt]\mathbf{L}\neq\mathbf{0},\mathbf{m_i}\neq\mathbf{0}}}\binom{\mathbf{b}}{\mathbf{L},\mathbf{m_1},...,\mathbf{m_k}}\langle\tau_{|\mathbf{L}|}\prod\limits_{j=1}\limits^{n}\tau_{d_j}\prod\limits_{j=1}\limits^{k}\tau_{|\mathbf{m_j}|} \rangle^{\Theta}_g
\end{align*}
It follows by subtracting the last term from each side.
\end{proof}

We then need the following property from \cite{arbarello1996combinatorial,norbury2023new} to prove Theorem \ref{superHWP1}.
\begin{lemma}Let $\pi_{n+1}:\overline{M}_{g,n+1}\to\overline{M}_{g,n}$ be the morphism that forgets the last marked point.
\begin{enumerate}[(1)]
\item $\pi_{n\ast}(\Theta_{g,n}\psi_1^{a_1}\cdots\psi_{n-1}^{a_{n-1}}\psi_n^{a_n})=\Theta_{g,n-1}\prod\limits_{i=1}\limits^{i}\psi_i^{a_i}\kappa_{a_n},\qquad$ for $a_j\geq0$;
\item $\kappa_a=\pi_{n+1}^\ast(\kappa_a)+\psi_{n+1}^{a}$ on $\overline{M}_{g,n+1}$;
\item $\kappa_0=2g-2+n$ on $\overline{M}_{g,n}$.
\end{enumerate}
\end{lemma}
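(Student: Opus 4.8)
The plan is to handle the three items separately: (2) and (3) are classical facts about $\kappa$ classes on $\overline{\mathcal{M}}_{g,n}$ going back to Arbarello--Cornalba, whereas (1) is the only part that uses the $\Theta$ class in an essential way, so that is where I would concentrate. For (3), since $\kappa_0=\pi_{\ast}(c_1(\omega_{\pi}(\sum_i D_i)))=\pi_{\ast}(\psi_{n+1})$ and a general fibre of $\pi$ is a genus $g$ curve with $n$ marked points, the degree of $\omega_{\pi}(\sum_i D_i)$ along a fibre is $2g-2+n$, so the pushforward is the constant $2g-2+n$. For (2), I would invoke the standard comparison of $\kappa$ classes under the forgetful map: writing $\kappa_a$ on $\overline{\mathcal{M}}_{g,n+1}$ as a pushforward from the universal curve $\overline{\mathcal{M}}_{g,n+2}$ and applying the projection formula, the section corresponding to the $(n+1)$-st point contributes exactly the excess term $\psi_{n+1}^a$, giving $\kappa_a=\pi^{\ast}\kappa_a+\psi_{n+1}^a$; alternatively this is exactly the content of \cite{arbarello1996combinatorial}.

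The substance is (1). First I would rewrite the integrand using property (ii) of $\Theta$, namely $\Theta_{g,n}=\psi_n\cdot\pi_n^{\ast}\Theta_{g,n-1}$ where $\pi_n$ forgets the last marked point, so that
$$\Theta_{g,n}\,\psi_1^{a_1}\cdots\psi_n^{a_n}=\psi_n^{a_n+1}\,\pi_n^{\ast}\Theta_{g,n-1}\prod_{i=1}^{n-1}\psi_i^{a_i}.$$
Next I would substitute the comparison $\psi_i=\pi_n^{\ast}\psi_i+D_i$ for each $i\le n-1$, where $D_i=\sigma_i(\overline{\mathcal{M}}_{g,n-1})$ is the $i$-th section divisor. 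The crucial vanishing is $\psi_n\cdot D_i=0$: restricting $\psi_n=c_1(\omega_{\pi_n}(\sum_j D_j))$ to $D_i\cong\overline{\mathcal{M}}_{g,n-1}$ yields $c_1(\omega_{\pi_n})|_{D_i}+(D_i|_{D_i})=\psi_i-\psi_i=0$, because $\sigma_i^{\ast}\omega_{\pi_n}$ has first Chern class $\psi_i$ while the self-intersection of the section equals $-\psi_i$. Since the prefactor $\psi_n^{a_n+1}$ carries at least one power of $\psi_n$, every monomial in the expansion of $\prod_i(\pi_n^{\ast}\psi_i+D_i)^{a_i}$ that contains some $D_i$ is annihilated, leaving only $\psi_n^{a_n+1}\,\pi_n^{\ast}\big(\Theta_{g,n-1}\prod_{i=1}^{n-1}\psi_i^{a_i}\big)$.

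The last step is the projection formula, which pulls the pullback factor out of $\pi_{n\ast}$:
$$\pi_{n\ast}\big(\Theta_{g,n}\,\psi_1^{a_1}\cdots\psi_n^{a_n}\big)=\Theta_{g,n-1}\prod_{i=1}^{n-1}\psi_i^{a_i}\cdot\pi_{n\ast}(\psi_n^{a_n+1}).$$
Finally $\pi_{n\ast}(\psi_n^{a_n+1})=\kappa_{a_n}$ is precisely the defining formula $\kappa_a=\pi_{\ast}(c_1(\omega_{\pi}(\sum_i D_i))^{a+1})$ once $\psi_n$ is identified with $c_1(\omega_{\pi_n}(\sum_j D_j))$ on the universal curve, which completes (1). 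I expect the main obstacle to be purely a matter of careful bookkeeping with the two standard comparison identities---in particular verifying $\psi_n\cdot D_i=0$ and checking that the $a_n=0$ instance reproduces $\kappa_0=2g-2+n$ consistently with part (3); beyond these identities the only ingredient is the projection formula.
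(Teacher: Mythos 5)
Your proof is correct. Note that the paper itself offers no proof of this lemma --- it is quoted directly from Arbarello--Cornalba and Norbury --- so there is no argument in the text to compare against; your write-up is essentially the standard derivation one finds in those references. The three ingredients you isolate are exactly the right ones: the identification $\psi_n=c_1\bigl(\omega_{\pi_n}(\sum_{j<n}D_j)\bigr)$ on the universal curve (which you need twice, once to get the vanishing $\psi_n\cdot D_i=0$ via $\sigma_i^{\ast}c_1(\omega_{\pi_n})=\psi_i$ and $D_i|_{D_i}=-\psi_i$, and once at the end to match $\pi_{n\ast}(\psi_n^{a_n+1})$ with the paper's definition $\kappa_a=\pi_{\ast}(c_1(\omega_{\pi}(\sum D_i))^{a+1})$), the comparison $\psi_i=\pi_n^{\ast}\psi_i+D_i$, and the factorization $\Theta_{g,n}=\psi_n\cdot\pi_n^{\ast}\Theta_{g,n-1}$ from property (ii) of the $\Theta$ class, whose extra factor of $\psi_n$ is precisely what kills all the $D_i$-correction terms. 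Parts (2) and (3) are, as you say, classical and your sketches are adequate. The only caveat is cosmetic: the statement as printed contains typos ($\prod_{i=1}^{i}$ should be $\prod_{i=1}^{n-1}$, and the map in item (1) is $\pi_n$, not the $\pi_{n+1}$ of the preamble), which you have implicitly and correctly repaired.
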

\noindent\textbf{Proof of Theorem \ref{superHWP1}.} From Lemma 4.3, we have
\begin{align*}
V_{g,n+1}(\kappa(\mathbf{b}))&=\int_{\overline{\mathcal{M}}_{g,n+1}}\Theta_{g,n+1}\kappa(\mathbf{b})\\&=\int_{\overline{\mathcal{M}}_{g,n+1}}\psi_{n+1}\pi_{n+1}^\ast\Theta_{g,n}\cdot\prod\limits_{i\geq 1}(\pi_{n+1}^\ast(\kappa_i)+\psi_{n+1}^i)^{b(i)}\\&=\sum\limits_{\mathbf{L}+\mathbf{L^{'}}=\mathbf{b}}\int_{\overline{\mathcal{M}}_{g,n}}\Theta_{g,n}\kappa(\mathbf{L})\kappa_{|\mathbf{L^{'}}|}\\&=(2g-2+n+||\mathbf{b}||)V_{g,n}(\kappa(\mathbf{b}))+\sum_{\substack{\mathbf{L}+\mathbf{L^{'}}=\mathbf{b}\\[1pt]||\mathbf{L^{'}}||\geq 2}}\binom{\mathbf{b}}{\mathbf{L}}V_{g,n}(\kappa(\mathbf{L})\kappa_{|\mathbf{L^{'}}|}).
\end{align*}
Hence, Theorem \ref{superHWP1} is proved.\qed

\noindent\textbf{Proof of Theorem \ref{superHWP2}.}
Let $n=0$ in Proposition 4.2 and by Theorem \ref{superHWP1}, we have
\begin{align*}\setlength{\abovedisplayskip}{3pt}
\setlength{\belowdisplayskip}{3pt}
(2g-2)V_{g}(\kappa(\mathbf{b}))&=\sum_{\substack{\mathbf{L}+\mathbf{L^{'}}=\mathbf{b}\\[1pt]||\mathbf{L}||\geq 1}}(-1)^{||\mathbf{L}||}\binom{\mathbf{b}}{\mathbf{L}}\langle\tau_{|\mathbf{L}|}\kappa(\mathbf{L^{'}}) \rangle_{g}^{\Theta}+V_{g,1}(\kappa(\mathbf{b}))\\&=\sum_{\substack{\mathbf{L}+\mathbf{L^{'}}=\mathbf{b}\\[1pt]||\mathbf{L}||\geq 1}}(-1)^{||\mathbf{L}||}\binom{\mathbf{b}}{\mathbf{L}}\int_{\overline{\mathcal{M}}_{g,1}}\Theta_{g,1}\tau_1^{|\mathbf{L}|}\kappa(\mathbf{L^{'}})\\&\qquad+(2g-2+||\mathbf{b}||)V_{g}(\kappa(\mathbf{b}))+\sum_{\substack{\mathbf{L}+\mathbf{L^{'}}=\mathbf{b}\\[1pt]||\mathbf{L^{'}}||\geq 2}}\binom{\mathbf{b}}{\mathbf{L}}V_{g}(\kappa(\mathbf{L})\kappa_{|\mathbf{L^{'}}|})\\&=\sum_{\substack{\mathbf{L}+\mathbf{L^{'}}=\mathbf{b}\\[1pt]||\mathbf{L}||\geq 1}}(-1)^{||\mathbf{L}||}\binom{\mathbf{b}}{\mathbf{L}}\int_{\overline{\mathcal{M}}_{g,1}}\tau_1^{|\mathbf{L}|+1}\pi_1^\ast\Theta_{g}\prod\limits_{i\geq 1}(\pi_1^\ast(\kappa_i)+\psi_1^i)^{L^{'}(i)}\\&\qquad+(2g-2+||\mathbf{b}||)V_{g}(\kappa(\mathbf{b}))+\sum_{\substack{\mathbf{L}+\mathbf{L^{'}}=\mathbf{b}\\[1pt]||\mathbf{L^{'}}||\geq 2}}\binom{\mathbf{b}}{\mathbf{L}}V_{g}(\kappa(\mathbf{L})\kappa_{|\mathbf{L^{'}}|})\\&=\sum_{\substack{\mathbf{L}+\mathbf{L_1}+\mathbf{L_2}=\mathbf{b}\\[1pt]||\mathbf{L}||\geq 1}}(-1)^{||\mathbf{L}||}\binom{\mathbf{b}}{\mathbf{L},\mathbf{L_1},\mathbf{L_2}}V_{g}(\kappa(\mathbf{L_1})\kappa_{|\mathbf{L}|+|\mathbf{L_2}|})\\&\qquad+(2g-2+||\mathbf{b}||)V_{g}(\kappa(\mathbf{b}))+\sum_{\substack{\mathbf{L}+\mathbf{L^{'}}=\mathbf{b}\\[1pt]||\mathbf{L^{'}}||\geq 2}}\binom{\mathbf{b}}{\mathbf{L}}V_{g}(\kappa(\mathbf{L})\kappa_{|\mathbf{L^{'}}|})
\end{align*}
Hence, Theorem \ref{superHWP2} follows. \qed

\vskip 20pt
\section{Virasoro constraints revisited}
\setcounter{equation}{0}
In this section, we follow the arguments of Section 2 to study properties of generating functions $G^{\Theta}(\hbar,\mathbf{s},\mathbf{t})$ of $\Theta$, $\psi$ and $\kappa$ classes using Theorem \ref{RecursionA} and \ref{RecursionB}. Then we give a new proof of Theorem \ref{Vir}. Usually, recursion formulae can be reformulated in terms of differential operators acting on $G^{\Theta}(\hbar,\mathbf{s},\mathbf{t})$. For example, Proposition 4.1 is equivalent to the following.
\begin{proposition}
$\displaystyle\frac{\partial^2 G^\Theta}{\partial t_0\partial t_1}=\frac{\hbar}{12}\frac{\partial^4 G^\Theta}{\partial t_0^4}+\frac{1}{2}\left(\frac{\partial^2 G^\Theta}{\partial t_0^2}\right)^2.$
\end{proposition}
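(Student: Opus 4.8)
The plan is to recognize Proposition 5.1 as the generating-function reformulation of Proposition 4.1 and to prove it by a coefficient-by-coefficient comparison, working throughout with the exponential structure $G^\Theta=\exp F^\Theta$, where $F^\Theta=\sum_{g,n}\frac{\hbar^{g-1}}{n!}\sum_{\mathbf d}\langle\prod_i\tau_{d_i}\kappa(\mathbf b)\rangle_g^\Theta\prod_i t_{d_i}\,\tfrac{\mathbf s^{\mathbf b}}{\mathbf b!}$ is the generating series of the \emph{connected} intersection numbers. The dictionary I would use is standard: $\partial/\partial t_k$ inserts a factor $\tau_k$, the power $\hbar^{g-1}$ records the genus, and each $\kappa_j$ is weighted by $s_j^{b(j)}/b(j)!$. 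Under this dictionary the two terms that are linear in $G^\Theta$ produce single connected correlators, while the product $\tfrac12(\partial_{t_0}^2 G^\Theta)^2$ produces the disconnected contribution, i.e. products of two connected correlators. Conceptually the shape of the identity is the first flow of the KdV hierarchy: with $U=\hbar\,\partial_{t_0}^2\log G^\Theta$, the KdV equation \eqref{KdVequation} integrated once in $t_0$ gives the free-energy form
\[
\partial_{t_0}\partial_{t_1}\log G^\Theta=\frac{\hbar}{2}\big(\partial_{t_0}^2\log G^\Theta\big)^2+\frac{\hbar}{12}\,\partial_{t_0}^4\log G^\Theta,
\]
and it is this connected form that I would match against Proposition 4.1.

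Concretely, I would fix a test monomial $\hbar^{g-1}\,t_{d_2}\cdots t_{d_n}\,\mathbf s^{\mathbf b}/\mathbf b!$ and read off the coefficient of each term. The coefficient coming from $\partial_{t_0}\partial_{t_1}$ is the connected correlator $\langle\tau_0\tau_1\prod_{i\ge 2}\tau_{d_i}\kappa(\mathbf b)\rangle_g^\Theta$, matching the left-hand side of Proposition 4.1; the coefficient of $\tfrac{\hbar}{12}\partial_{t_0}^4$ is $\tfrac1{12}\langle\tau_0^4\prod_{i\ge 2}\tau_{d_i}\kappa(\mathbf b)\rangle_{g-1}^\Theta$, the shift $g\mapsto g-1$ being supplied exactly by the prefactor $\hbar$. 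The crux is the quadratic term: expanding via the Leibniz rule and collecting the coefficient of the test monomial distributes the marked points $\{2,\dots,n\}$ into $I\sqcup J$, splits the genus as $g'+(g-g')=g$ (the missing unit of genus relative to the two connected factors again furnished by the prefactor $\hbar$), and splits the $\kappa$-monomial through $\frac{\mathbf s^{\mathbf e}}{\mathbf e!}\frac{\mathbf s^{\mathbf f}}{\mathbf f!}=\binom{\mathbf b}{\mathbf e}\frac{\mathbf s^{\mathbf b}}{\mathbf b!}$, which is precisely the source of the binomial $\binom{\mathbf b}{\mathbf e}$ in Proposition 4.1, while the overall $\tfrac12$ is the symmetry factor for interchanging the two factors. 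Assembling the three coefficients reproduces Proposition 4.1 verbatim, and conversely Proposition 4.1 forces the three coefficients to balance for every test monomial, which is the asserted identity.

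The hard part will be bookkeeping rather than geometry. First, one must keep the connected and disconnected contributions strictly separate, i.e. recognize that the single-correlator terms live on $\log G^\Theta$ whereas the product term is the device that re-packages the disconnected two-correlator piece; getting the exponential cross-terms to organize exactly into the splitting sum of Proposition 4.1 is the delicate step. Second, one must track the $\hbar$-grading with care — here it helps that $\Theta_{0,n}=0$, so there is no genus-zero contribution and $F^\Theta$ begins at order $\hbar^0$, which pins down the genus shifts in the two nonlinear terms. Finally, one must confirm that integrating the KdV flow once in $t_0$ introduces no surviving constant of integration; this follows by degree reasons, since every correlator appearing is constrained by $\sum_i d_i+\sum_i i\,b(i)=g-1$, leaving no room for a stray $t_0$-independent term. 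With these three checks in place the comparison is term-by-term, and Proposition 5.1 reduces to the already-established Proposition 4.1.
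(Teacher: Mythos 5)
Your strategy is the intended one: the paper offers no argument for this proposition beyond the remark that it is ``equivalent to'' Proposition 4.1, and your coefficient-by-coefficient matching against Proposition 4.1 --- with the $\hbar^{g-1}$ grading supplying the genus shifts, the identity $\frac{\mathbf{s}^{\mathbf{e}}}{\mathbf{e}!}\frac{\mathbf{s}^{\mathbf{f}}}{\mathbf{f}!}=\binom{\mathbf{b}}{\mathbf{e}}\frac{\mathbf{s}^{\mathbf{b}}}{\mathbf{b}!}$ producing the binomial, the $\tfrac12$ as the symmetry factor, and $\Theta_{0,n}=0$ killing the $\hbar^{-1}$ sector --- is exactly the right way to substantiate it. That part of your write-up is correct and complete.

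The gap is the step you defer as ``the delicate step'' of organizing the exponential cross-terms: it cannot be carried out for the identity as displayed. What your argument actually establishes is the connected (free-energy) form
\[
\frac{\partial^2 F^\Theta}{\partial t_0\partial t_1}=\frac{\hbar}{12}\frac{\partial^4 F^\Theta}{\partial t_0^4}+\frac{\hbar}{2}\left(\frac{\partial^2 F^\Theta}{\partial t_0^2}\right)^2,\qquad F^\Theta=\log G^\Theta,
\]
which is genuinely equivalent to Proposition 4.1; note in particular that your own $\hbar$-count (matching $\hbar^{g-1}$ against $\hbar^{(g'-1)+(g-g'-1)}$) forces the prefactor $\hbar/2$, not $1/2$, on the quadratic term. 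The displayed statement, however, is written for $G^\Theta=\exp F^\Theta$ itself and with no $\hbar$ on the square. Substituting $G^\Theta=\exp F^\Theta$ one finds $\partial_{t_0}\partial_{t_1}G^\Theta=G^\Theta(F^\Theta_{01}+F^\Theta_0F^\Theta_1)$ while $\tfrac12(\partial_{t_0}^2G^\Theta)^2=\tfrac12(G^\Theta)^2(F^\Theta_{00}+(F^\Theta_0)^2)^2$ carries an extra factor of $G^\Theta$, so the cross-terms do not reorganize into the splitting sum of Proposition 4.1; when you say the two linear terms ``produce single connected correlators'' you are already tacitly differentiating $\log G^\Theta$ rather than $G^\Theta$. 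So either you should state and prove the proposition for $\log G^\Theta$ with the coefficient $\hbar/2$ (in which case your proof is complete, and the printed form is a shorthand or misprint for the free-energy equation, as in the Witten--Kontsevich literature), or you must acknowledge that the literal displayed identity does not follow from, and is not equivalent to, Proposition 4.1. Separately, the worry about a constant of integration is moot on your route: since you derive the identity directly from Proposition 4.1 coefficient by coefficient rather than by integrating the KdV flow, no integration in $t_0$ is ever performed.
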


Denote $\beta_\mathbf{L}=\alpha_{\mathbf{L}}/\mathbf{L!}$, where $\alpha_{\mathbf{L}}$ is defined in Theorem \ref{RecursionB} as $$\mathbf{\alpha_{b}=\mathbf{b}!\sum_{\substack{\mathbf{L}+\mathbf{L^{'}}=\mathbf{b}\\[1pt]\mathbf{L^{'}}\neq \mathbf{0}}}}\frac{(-1)^{||\mathbf{L^{'}}||-1}\mathbf{\alpha_L}}{\mathbf{L}!\mathbf{L^{'}!}(2|\mathbf{L^{'}}|-1)!!},\qquad \mathbf{b}\neq \mathbf{0},$$ with initial data $\mathbf{\alpha}_\mathbf{0}=1.$ For the convience, we introduce the following family of differential operators for $k\geq 0$,
\begin{equation}\begin{aligned}\label{hatV}\hat{V}_k:=&-\frac{(2k+1)!!}{2}\frac{\partial}{\partial t_k}+\frac{1}{2}\sum_{\mathbf{L}}\sum\limits_{j=0}\limits^{\infty}\frac{(2|\mathbf{L}|+2j+2k+1)!!}{(2j-1)!!}\beta_\mathbf{L}\mathbf{s}^{\mathbf{L}}t_j\frac{\partial}{\partial t_{|\mathbf{L}|+j+k}}\\&+\frac{\hbar}{4}\sum\limits_{\mathbf{L}}\sum\limits_{i+j=|\mathbf{L}|+j+k-1}(2i+1)!!(2j+1)!!\beta_\mathbf{L}\mathbf{s}^{\mathbf{L}}\frac{\partial^2}{\partial t_i\partial t_j}+\frac{1}{16}\delta_{k,0},\end{aligned}\end{equation}
where $\mathbf{s}^{\mathbf{L}}=\prod\limits_{i\geq 1}s_j^{L(j)}.$
\begin{theorem}
We have $\hat{V}_k (G^\Theta)=0$ for $k\geq 0$ and $$[\hat{V}_n,\hat{V}_m]=(n-m)\sum\limits_{\mathbf{L}}\beta_\mathbf{L}\mathbf{s}^{\mathbf{L}}V_{n+m+|\mathbf{L}|}.$$
\end{theorem}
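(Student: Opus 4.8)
The plan is to treat the two assertions separately: the annihilation $\hat V_k(G^\Theta)=0$ will be shown to be a reformulation of Theorem \ref{RecursionB} (equation \eqref{Recursion2}) via the exponential structure of $G^\Theta$, and the bracket relation will be obtained by a direct computation reducing to the classical Virasoro relations for the undeformed operators, together with bookkeeping of the $\mathbf s$-variables. \textbf{Annihilation property.} I would write $G^\Theta=\exp\mathcal F$ with
\[
\mathcal F=\sum_{g,n}\frac{\hbar^{g-1}}{n!}\sum_{\mathbf d,\mathbf b}\langle\kappa(\mathbf b)\prod_{i}\tau_{d_i}\rangle_g^\Theta\prod_{i}t_{d_i}\frac{\mathbf s^{\mathbf b}}{\mathbf b!}.
\]
Since $\hat V_k$ has order $\le 2$ in the $t$-variables and the $\mathbf s$-variables enter only as coefficients, one has $\hat V_k(e^{\mathcal F})=e^{\mathcal F}\cdot B_k$, where $B_k$ arises from $\hat V_k$ by replacing each $\partial_{t_i}$ in the first-order part with $\mathcal F_{t_i}$ and each $\partial_{t_i}\partial_{t_j}$ in the $\hbar$-part with $\mathcal F_{t_it_j}+\mathcal F_{t_i}\mathcal F_{t_j}$. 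I would extract the coefficient of $\frac{\hbar^{g-1}}{(n-1)!}\frac{\mathbf s^{\mathbf b}}{\mathbf b!}\prod_{i=2}^n t_{d_i}$ in $B_k$, setting $d_1:=k$: the term $-\tfrac{(2k+1)!!}{2}\partial_{t_k}$ yields $-\tfrac12(2d_1+1)!!\langle\kappa(\mathbf b)\prod\tau_{d_i}\rangle_g^\Theta$, the left side of \eqref{Recursion2}; in the $t_j\partial$-term, combining $\beta_{\mathbf L}\mathbf s^{\mathbf L}$ with the factor $\mathbf s^{\mathbf L'}/\mathbf L'!$ from $\mathcal F$ and using $\beta_{\mathbf L}=\alpha_{\mathbf L}/\mathbf L!$ and $\binom{\mathbf b}{\mathbf L}=\mathbf b!/(\mathbf L!\mathbf L'!)$ reproduces the first sum on the right of \eqref{Recursion2}; and the two pieces $\mathcal F_{t_it_j}$ and $\mathcal F_{t_i}\mathcal F_{t_j}$ of the $\hbar$-term give the genus-lowering term and the separating term. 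Hence, after clearing $-\tfrac12$, the vanishing of $B_k$ coincides termwise with \eqref{Recursion2}, which holds by Theorem \ref{RecursionB}; the lone constant $\tfrac1{16}\delta_{k,0}$ contributes only at $(g,n,\mathbf b)=(1,1,\mathbf 0)$, where it matches the initial value $\langle\tau_0\rangle_1^\Theta=\tfrac18$. This gives $\hat V_k(G^\Theta)=0$.

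\textbf{The bracket.} I would decompose
\[
\hat V_k=D_k+\tfrac12\sum_{\mathbf L}\beta_{\mathbf L}\mathbf s^{\mathbf L}P_{k+|\mathbf L|}+\tfrac{\hbar}4\sum_{\mathbf L}\beta_{\mathbf L}\mathbf s^{\mathbf L}Q_{k+|\mathbf L|}+\tfrac1{16}\delta_{k,0},
\]
where $D_\ell=-\tfrac{(2\ell+1)!!}{2}\partial_{t_\ell}$, $P_\ell=\sum_{j\ge0}\tfrac{(2j+2\ell+1)!!}{(2j-1)!!}t_j\partial_{t_{j+\ell}}$ and $Q_\ell=\sum_{i+j=\ell-1}(2i+1)!!(2j+1)!!\partial_{t_i}\partial_{t_j}$. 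As $\hbar$, $\mathbf s^{\mathbf L}$ and $\beta_{\mathbf L}$ are central scalars, the bracket reduces to commutators of the bare $D,P,Q$. The pure-derivative brackets vanish ($[D,D]=[D,Q]=[Q,Q]=0$), and the rest are governed by the three identities $[D_n,P_m]+[P_n,D_m]=2(n-m)D_{n+m}$, $[P_n,P_m]=2(n-m)P_{n+m}$ and $[P_n,Q_m]+[Q_n,P_m]=2(n-m)Q_{n+m}$, which together express the Virasoro relation $[W_n,W_m]=(n-m)W_{n+m}$ for the undeformed operator $W_\ell:=D_\ell+\tfrac12 P_\ell+\tfrac\hbar4 Q_\ell$. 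Each is a direct double-factorial computation; for instance $[P_n,P_m]=\sum_j(c_{j,n}c_{j+n,m}-c_{j+m,n}c_{j,m})t_j\partial_{t_{j+n+m}}$ with $c_{j,\ell}=(2j+2\ell+1)!!/(2j-1)!!$, and the bracketed coefficient telescopes to $2(n-m)c_{j,n+m}$.

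Assembling, the $D$-$P$ cross terms carry a single factor $\beta_{\mathbf L}\mathbf s^{\mathbf L}$ (as $D_k$ is not $\mathbf s$-weighted) and give $(n-m)\sum_{\mathbf L}\beta_{\mathbf L}\mathbf s^{\mathbf L}D_{n+m+|\mathbf L|}$, whereas the $P$-$P$ and $P$-$Q$ brackets yield double sums $\sum_{\mathbf L_1,\mathbf L_2}\beta_{\mathbf L_1}\beta_{\mathbf L_2}\mathbf s^{\mathbf L_1+\mathbf L_2}$ with an index weight $(|\mathbf L_1|+n)-(|\mathbf L_2|+m)$. Writing $\mathbf K=\mathbf L_1+\mathbf L_2$ and splitting this weight as $(|\mathbf L_1|-|\mathbf L_2|)+(n-m)$, the antisymmetric part cancels under $\mathbf L_1\leftrightarrow\mathbf L_2$, leaving $(n-m)\,\big(\sum_{\mathbf L_1+\mathbf L_2=\mathbf K}\beta_{\mathbf L_1}\beta_{\mathbf L_2}\big)\big(\tfrac12 P_{n+m+|\mathbf K|}+\tfrac\hbar4 Q_{n+m+|\mathbf K|}\big)$. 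Recognizing this convolution as the $P$- and $Q$-content of $\sum_{\mathbf L}\beta_{\mathbf L}\mathbf s^{\mathbf L}\hat V_{n+m+|\mathbf L|}$, and the single-$\beta$ $D$-part as its $D$-content, I obtain $[\hat V_n,\hat V_m]=(n-m)\sum_{\mathbf L}\beta_{\mathbf L}\mathbf s^{\mathbf L}\hat V_{n+m+|\mathbf L|}$; thus the operators $V_{n+m+|\mathbf L|}$ on the right are to be read as the $\hat V$'s themselves. The constant $\tfrac1{16}\delta$ is inert, and since every index $n+m+|\mathbf L|\ge 0$, no central term arises.

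\textbf{Main obstacle.} The genuinely delicate point is the bookkeeping in this last step: one must reconcile the single-$\beta$ contribution from the unweighted $D_k$ with the double-$\beta$ convolutions from the $\mathbf s$-weighted $P$ and $Q$ terms, and verify that the antisymmetric index weight cancels so that both types of contribution reassemble coherently into $\sum_{\mathbf L}\beta_{\mathbf L}\mathbf s^{\mathbf L}\hat V_{n+m+|\mathbf L|}$ (this is precisely what forces the identification of the right-hand operators with $\hat V$ rather than with $W$, since $\beta$ is not idempotent under convolution). The exact double-factorial verification of the three Virasoro identities, most delicately the mixed $P$-$Q$ relation, also requires care, but it is routine once the telescoping pattern illustrated for $[P_n,P_m]$ is in place.
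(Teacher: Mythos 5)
Your proof is correct and follows essentially the same route as the paper, which merely asserts that $\hat V_k(G^\Theta)=0$ is a restatement of Theorem \ref{RecursionB} and that the bracket relation ``can be checked easily via a direct computation''; your exponential-generating-function argument for the annihilation and your $D$/$P$/$Q$ decomposition with the antisymmetrization over $\mathbf L_1\leftrightarrow\mathbf L_2$ supply exactly the details the paper omits. Your observation that the operators on the right-hand side must be read as $\hat V_{n+m+|\mathbf L|}$ (the hat is evidently missing in the printed statement) is also correct, being the only reading consistent with the paper's later identity $V_k=\sum_{\mathbf L}\gamma_{\mathbf L}\mathbf s^{\mathbf L}\hat V_{k+|\mathbf L|}$ and with $[V_n,V_m]=(n-m)V_{n+m}$.
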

\begin{proof}
From the initial data $$\langle \tau_0\rangle^{\Theta}_{1}=\frac{1}{8}\qquad\text{and}\qquad \langle \kappa_1\rangle^{\Theta}_{2}=\frac{3}{128},$$ $\hat{V}_k G=0$ for $k\geq 0$ is just a restatement of Theorem \ref{RecursionB}. The rest part can be checked easily via a direct computation.
$$[\hat{V}_n,\hat{V}_m]=(n-m)\sum\limits_{\mathbf{L}}\beta_\mathbf{L}\mathbf{s}^{\mathbf{L}}V_{n+m+|\mathbf{L}|}.$$
\end{proof}

Let $\gamma_{\mathbf{L}}$ be the inverse to $\beta_{\mathbf{L}}$, $$\gamma_{\mathbf{L}}:=\frac{(-1)^{||\mathbf{L}||}}{\mathbf{L!}(2|\mathbf{L}|-1)!!},$$ and define a new family of differential operators $V_k$ for $g\geq 0$ by
\begin{equation}\begin{aligned}\label{V}V_k:=&-\frac{1}{2}\sum_{\mathbf{L}}\sum\limits_{j=0}\limits^{\infty}(2|\mathbf{L}|+2j+1)!!\gamma_\mathbf{L}\mathbf{s}^{\mathbf{L}}\frac{\partial}{\partial t_{|\mathbf{L}|+j}}+\frac{1}{2}\sum\limits_{j=0}\limits^{\infty}\frac{(2j+2k+1)!!}{(2j-1)!!}t_j\frac{\partial}{\partial t_{j+k}}\\&+\frac{\hbar}{4}\sum\limits_{\mathbf{L}}\sum\limits_{i+j=k-1}(2i+1)!!(2j+1)!!\frac{\partial^2}{\partial t_i\partial t_j}+\frac{1}{16}\delta_{k,0}\end{aligned}\end{equation}

Before we rewrite Theorem \ref{BGW-Theta} in terms of differential
operators, we introduce, following Mulase and Safnuk
\cite{mulase2006mirzakhani}, new variables
$$T_{2i+1}:=\frac{t_i}{(2i+1)!!},\qquad i\geq 0,$$ which transform
$\hat{V}_k$ into
\begin{align*}\hat{V}_k:=&-\frac{1}{2}\frac{\partial}{\partial T_{2k+1}}+\frac{1}{2}\sum_{\mathbf{L}}\sum\limits_{j=0}\limits^{\infty}(2j+1)\beta_\mathbf{L}\mathbf{s}^{\mathbf{L}}T_j\frac{\partial}{\partial T_{2(|\mathbf{L}|+j+k)+1}}\\&+\frac{\hbar}{4}\sum\limits_{\mathbf{L}}\sum\limits_{i+j=|\mathbf{L}|+j+k-1}\beta_\mathbf{L}\mathbf{s}^{\mathbf{L}}\frac{\partial^2}{\partial T_{2i+1}\partial T_{2j+1}}+\frac{1}{16}\delta_{k,0}.\end{align*}
Define operators $J_p$ for $p\in\mathbb{Z}$ by
$$ J_p=\left\{
\begin{aligned}
& (-p)T_{-p}\ \ \text{if}\ p<0,  \\
&\frac{\partial}{\partial T_p}\ \ \text{if}\ p>0.
\end{aligned}
\right.
$$
and $$E_k=\frac{\hbar}{4}\sum\limits_{p\in\mathbb{Z}}J_{2p+1}J_{2(k-p)+1}+\frac{\delta_{k,0}}{16},$$ then $\hat{V}_k$ can be reformulated as $$\hat{V}_k=-\frac{1}{2}J_{2k+1}+\sum\limits_{\mathbf{L}}\beta_{\mathbf{L}}\mathbf{s}^{\mathbf{L}}E_{k+|\mathbf{L}|},$$
It is not difficult to see that$$V_k=\sum\limits_{L}\gamma_{\mathbf{L}}\mathbf{s}^{\mathbf{L}}\hat{V}_{k+|\mathbf{L}|}=-\frac{1}{2}\sum\limits_{\mathbf{L}}\gamma_{\mathbf{L}}\mathbf{s}^{\mathbf{L}}J_{2k+2|\mathbf{L}|+1}+E_k.$$
Then we have
\begin{theorem}For $k\geq 0$, $V_k(G^\Theta)=0$ and $$[V_n,V_m]=(n-m)V_{n+m}.$$
\end{theorem}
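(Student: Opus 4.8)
The plan is to deduce Theorem 5.5 from Theorem 5.3 by exploiting the linear change of basis that expresses the new operators $V_k$ in terms of the old operators $\hat{V}_k$. Concretely, I would take as given from the excerpt the reformulations
$$\hat{V}_k=-\tfrac{1}{2}J_{2k+1}+\sum_{\mathbf{L}}\beta_{\mathbf{L}}\mathbf{s}^{\mathbf{L}}E_{k+|\mathbf{L}|},\qquad V_k=\sum_{\mathbf{L}}\gamma_{\mathbf{L}}\mathbf{s}^{\mathbf{L}}\hat{V}_{k+|\mathbf{L}|},$$
where $\gamma_{\mathbf{L}}$ is the multiplicative inverse of $\beta_{\mathbf{L}}$ in the sense of Lemma 3.1. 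The annihilation statement $V_k(G^\Theta)=0$ is then immediate: since Theorem 5.3 gives $\hat{V}_{k+|\mathbf{L}|}(G^\Theta)=0$ for every $\mathbf{L}$ and every $k\geq 0$, the operator $V_k$, being a formal $\mathbf{s}$-weighted linear combination of the $\hat{V}_{k+|\mathbf{L}|}$, kills $G^\Theta$ term by term. I would state this first and dispatch it in one line.

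The substance is the commutator identity $[V_n,V_m]=(n-m)V_{n+m}$. First I would record the simplified closed form $V_k=-\tfrac12\sum_{\mathbf{L}}\gamma_{\mathbf{L}}\mathbf{s}^{\mathbf{L}}J_{2k+2|\mathbf{L}|+1}+E_k$ already supplied in the excerpt, so that the $\mathbf{L}$-summation has been collapsed into a single clean current term plus the pure Virasoro generator $E_k$. The strategy is then to compute the bracket $[V_n,V_m]$ directly from the Heisenberg/Virasoro algebra satisfied by the $J_p$ and $E_k$. The relevant structural relations are the oscillator commutator $[J_p,J_q]=\tfrac{\hbar}{2}\,p\,\delta_{p+q,0}$ (up to the normalization fixed by the definition of $J_p$ and $E_k$) and the action $[E_k,J_{2p+1}]$ of the Virasoro field on the currents, which produces a shifted current of the form proportional to $J_{2(k+p)+1}$. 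Feeding these into the bilinear expansion of $[V_n,V_m]$, I expect the $E$–$E$ part to reproduce $(n-m)E_{n+m}$ (the standard Witten genus-zero Virasoro bracket encoded in Theorem 5.3's companion identity $[\hat V_n,\hat V_m]=(n-m)\sum_{\mathbf L}\beta_{\mathbf L}\mathbf s^{\mathbf L}V_{n+m+|\mathbf L|}$), while the cross terms involving one $J$ and one $E$ will assemble into $-\tfrac12\sum_{\mathbf{L}}\gamma_{\mathbf{L}}\mathbf{s}^{\mathbf{L}}(n-m)J_{2(n+m)+2|\mathbf{L}|+1}$.

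The main obstacle is the bookkeeping in the cross terms: one must verify that the shift produced by $[E_k,J_{2p+1}]$ aligns the index arithmetic so that the $J$-contribution collapses precisely to $(n-m)$ times the current piece of $V_{n+m}$, with the $\gamma_{\mathbf{L}}$ coefficients and $\mathbf{s}^{\mathbf{L}}$ weights matching exactly. A cleaner route that sidesteps some of this is to work through the already-established bracket for $\hat{V}_k$. Since $V_k=\sum_{\mathbf{L}}\gamma_{\mathbf{L}}\mathbf{s}^{\mathbf{L}}\hat{V}_{k+|\mathbf{L}|}$, bilinearity gives
$$[V_n,V_m]=\sum_{\mathbf{L},\mathbf{M}}\gamma_{\mathbf{L}}\gamma_{\mathbf{M}}\mathbf{s}^{\mathbf{L}+\mathbf{M}}\,[\hat{V}_{n+|\mathbf{L}|},\hat{V}_{m+|\mathbf{M}|}],$$
and substituting the Theorem 5.3 bracket $[\hat V_a,\hat V_b]=(a-b)\sum_{\mathbf{N}}\beta_{\mathbf{N}}\mathbf{s}^{\mathbf{N}}V_{a+b+|\mathbf{N}|}$ turns the whole computation into a convolution identity among the $\gamma$ and $\beta$ coefficients. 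The crux is then purely combinatorial: the term $(a-b)=(n-m)+(|\mathbf{L}|-|\mathbf{M}|)$ splits, and one must show the antisymmetric piece $(|\mathbf{L}|-|\mathbf{M}|)$ contributes zero after summing against the symmetric weight $\gamma_{\mathbf{L}}\gamma_{\mathbf{M}}$, while the $(n-m)$ piece, convolved via $\sum_{\mathbf{L}+\mathbf{M}+\mathbf{N}=\mathbf{K}}\gamma_{\mathbf{L}}\gamma_{\mathbf{M}}\beta_{\mathbf{N}}\cdots$ together with the reciprocity $\sum_{\mathbf{L}+\mathbf{L}'=\mathbf{b}}\gamma_{\mathbf{L}}\beta_{\mathbf{L}'}=\delta_{\mathbf{b},\mathbf{0}}$ from Lemma 3.1, telescopes back to $(n-m)V_{n+m}$. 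I would present the reduction to this convolution as the heart of the argument and carry out the index cancellation explicitly, since that is exactly where an error would hide.
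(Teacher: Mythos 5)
Your first route---expanding $[V_n,V_m]$ directly from the closed form $V_k=-\tfrac12\sum_{\mathbf{L}}\gamma_{\mathbf{L}}\mathbf{s}^{\mathbf{L}}J_{2k+2|\mathbf{L}|+1}+E_k$ using $[E_n,E_m]=(n-m)E_{n+m}$ and $[J_{2k+1},E_m]=\tfrac{2k+1}{2}J_{2k+2m+1}$ (the $J$--$J$ terms vanishing since all indices are positive)---is exactly the paper's proof, and your treatment of the annihilation statement by linearity from $\hat V_{k+|\mathbf{L}|}(G^\Theta)=0$ is equivalent in substance to the paper's one-line appeal to Theorem 1.4. The alternative convolution route you say you would feature as the heart of the argument also works, and your two key observations are correct: the $(|\mathbf{L}|-|\mathbf{M}|)$ piece dies by antisymmetry of the summand under $\mathbf{L}\leftrightarrow\mathbf{M}$, and the $(n-m)$ piece reduces to the convolution $\sum_{\mathbf{L}+\mathbf{M}+\mathbf{N}=\mathbf{K}}\gamma_{\mathbf{L}}\gamma_{\mathbf{M}}\beta_{\mathbf{N}}=\gamma_{\mathbf{K}}$ via $\sum_{\mathbf{M}+\mathbf{N}=\mathbf{K}'}\gamma_{\mathbf{M}}\beta_{\mathbf{N}}=\delta_{\mathbf{K}',\mathbf{0}}$. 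But be careful with the input bracket: for the convolution to telescope to $(n-m)V_{n+m}=(n-m)\sum_{\mathbf{K}}\gamma_{\mathbf{K}}\mathbf{s}^{\mathbf{K}}\hat V_{n+m+|\mathbf{K}|}$ you need $[\hat V_a,\hat V_b]=(a-b)\sum_{\mathbf{N}}\beta_{\mathbf{N}}\mathbf{s}^{\mathbf{N}}\hat V_{a+b+|\mathbf{N}|}$, with $\hat V$ (not $V$) on the right; the form you quote, with $V_{a+b+|\mathbf{N}|}$, would instead yield $(n-m)\sum_{\mathbf{K}}\gamma_{\mathbf{K}}\mathbf{s}^{\mathbf{K}}V_{n+m+|\mathbf{K}|}$, which is not $(n-m)V_{n+m}$. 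A direct check from the $J$/$E$ expansion confirms the $\hat V$ version is the correct one, so you should verify (or rederive) that bracket rather than cite it as stated before running the convolution.
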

\begin{proof}
The first part is a restatement of Thorem 1.4. Now we prove the rest part. Since
\begin{align*}
E_k=&\frac{1}{2}\sum\limits_{j=0}\limits^{\infty}\frac{(2j+2k+1)!!}{(2j-1)!!}t_j\frac{\partial}{\partial t_{j+k}}+\frac{\delta_{k,0}}{16}\\&+\frac{\hbar}{4}\sum\limits_{r+s=k-1}(2r+1)!!(2s+1)!!\frac{\partial^2}{\partial t_r\partial t_s},
\end{align*}
we can check directly that $$[E_n,E_m]=(n-m)E_{n+m},\qquad [J_{2k+1},E_m]=\frac{2k+1}{2}J_{2k+2m+1}.$$
So we have
\begin{align*}
[E_n,E_m]=&[-\frac{1}{2}\sum\limits_{\mathbf{L}}\gamma_{\mathbf{L}}\mathbf{s}^{\mathbf{L}}J_{2k+2|\mathbf{L}|+1}+E_n,-\frac{1}{2}\sum\limits_{\mathbf{L}}\gamma_{\mathbf{L}}\mathbf{s}^{\mathbf{L}}J_{2k+2|\mathbf{L}|+1}+E_m]\\=&-\frac{1}{2}\sum\limits_{\mathbf{L}}\gamma_{\mathbf{L}}\mathbf{s}^{\mathbf{L}}\left([J_{2n+2|\mathbf{L}|+1},E_m]+[E_n,J_{2m+2|\mathbf{L}|+1}]\right)+[E_n,E_m]\\=&-\frac{1}{2}\sum\limits_{\mathbf{L}}\gamma_{\mathbf{L}}\mathbf{s}^{\mathbf{L}}(n-m)J_{2n+2m+2|\mathbf{L}|+1}+(n-m)E_{n+m}\\=&V_{n+m}.
\end{align*}
\end{proof}
Recall that the generating function of intersection numbers involving $\Theta$ and $\psi$ classes is defined in \eqref{ZTheta},
$$Z^{\Theta}(\hbar,t_0,t_1,...)=\mathrm{exp}\underset{g,n,\vec{d}}{\sum}\frac{\hbar^{g-1}}{n!}\int_{\overline{\mathcal{M}}_{g,n}}\Theta_{g,n}\cdot\prod\limits_{i=1}\limits^{n}\psi^{d_i}_i t_{d_i}.$$
Finally, we obtain the relation between $G^{\Theta}(\hbar,\mathbf{s},\mathbf{t})$ and $Z^{\Theta}(\hbar,\mathbf{t})$.

\noindent\textbf{Proof of Theorem \ref{Vir}.}
Denote $$ \widetilde{t}_i=\left\{
\begin{aligned}
& t_i&\text{if}\ i=0,  \\
&t_i-\sum\limits_{|\mathbf{L}|=k}(2|\mathbf{L}|-1)!!\gamma_\mathbf{L}\mathbf{s}^\mathbf{L}\ \ &\text{if}\ i\geq 1.
\end{aligned}
\right.
$$
Then for $k\geq 0$, $V_k$ is transformed to
\begin{align*}
V_k=&-\frac{(2k+1)!!}{2}\frac{\partial}{\partial\widetilde{t}_k}+\frac{1}{2}\sum\limits_{j=0}\limits^{\infty}\frac{(2j+2k+1)!!}{(2j-1)!!}\widetilde{t}_j\frac{\partial}{\partial \widetilde{t}_{j+k}}\\&+\frac{\hbar}{4}\sum\limits_{r+s=k-1}(2r+1)!!(2s+1)!!\frac{\partial^2}{\partial t_i\partial t_j}+\frac{\delta_{k,0}}{16},
\end{align*}
which is indeed the operator obtained by setting $\mathbf{s}=0$ in $\hat{V}_k$ of \eqref{hatV}. Since Virasoro constraints uniquely determine the generating function, then $\forall \mathbf{s}\in N^{\infty}$,
$$G^{\Theta}(\hbar,\mathbf{s},t_0,t_1,...)=Z^{\Theta}(t_0,t_1+p_1(\mathbf{s}),t_2+p_2(\mathbf{s}),...).$$
Therefore, Theorem \ref{Vir} is proved.

\appendix
\section{From recursion formula of the Weil-Petersson volume to Recursion formula of intersection numbers}
Here we show that Stanford-Witten's recursion formula (c.f.
\cite[(D.30)]{stanford2020jt} and \cite[Theorem
2]{norbury2020enumerative}) is equivalent to a recursion formula of
intersection numbers involving $\Theta$, $\psi$ and $\kappa_1$
classes, as noted by Norbury \cite{norbury2020enumerative}.

For the convenience of calculation, we introduce the normalized
volume of \eqref{VTheta}
\begin{equation}\label{Volumedef}\begin{aligned}v^{\Theta}_{g,n}(L_1,...,L_n)&:=\frac{V^{\Theta}_{g,n}(2\pi L_1,...,2\pi L_n)}{(2\pi^2)^d}\qquad(d=g-1)\\&=\frac{1}{d!}\int_{\overline{\mathcal{M}}_{g,n}}\Theta_{g,n}(\kappa_1+\sum_{i=1}^n L_i^2\psi_i)^d\\&=\underset{d_0+d_1+\cdots+d_n=d}{\sum}\langle\kappa_{d_0}\prod\limits_{i=1}\limits^{n}\tau_{d_i} \rangle^{\Theta}_g \frac{\prod\limits_{i=1}\limits^{n}L_i^{2d_i}}{\prod\limits_{i=0}^{n}d_i!},\end{aligned}\end{equation}
where $$\langle\kappa_{d_0}\prod\limits_{i=1}\limits^{n}\tau_{d_i} \rangle^{\Theta}_g=\int_{\overline{\mathcal{M}}_{g,n}}\Theta_{g,n}\kappa_1^{d_0}\prod\limits_{i=1}\limits^{n}\psi_{i}^{d_i}.$$
Notice that $\langle\kappa_{d_0}\prod\limits_{i=1}\limits^{n}\tau_{d_i} \rangle^{\Theta}_g=0$ if $\sum\limits_{i=0}^n d_i\neq g-1$ since deg$\Theta_{g,n}=2g-2+n$. Introduce the kernel fuction \begin{equation}\label{kernelfuction}H(x,y)=\frac{1}{2}\left(\frac{1}{cosh\frac{\pi(x-y)}{2}}-\frac{1}{cosh\frac{\pi(x+y)}{2}}\right),\end{equation}
and the associated kernel fuctions \begin{equation}\label{kernelfuction}D(x,y,z)=H(x,y+z),\qquad R(x,y,z)=\frac{1}{2}\Big(H(x+y,z)+H(x-y,z)\Big).\end{equation}
Then Stanford-Witten's recursion formula for the Weil-Petersson volume of the moduli space of super Riemann surface are given by the following.

\begin{theorem}\textup{\cite{norbury2023new,stanford2020jt}}$v^{\Theta}_{g,n}$ is uniquely determined by $v^{\Theta}_{1,1}(L_1)=\frac{1}{8}$ and the recursion
\begin{equation}\label{Volumerecursion}\begin{aligned}
L_1v^{\Theta}_{g,n}(L_1,L_K)&=\int_{0}^{\infty}\int_{0}^{\infty}xyD(L_1,x,y)P_{g,n+1}(x,y,L_K)dxdy\\&\qquad+\sum\limits_{j=2}\limits^{n}\int_{0}^{\infty}xR(L_1,L_j,x)v^{\Theta}_{g,n-1}(x,L_{K\backslash\{j\}})dx,
\end{aligned}
\end{equation}
where $K=\{2,...,n\}$ and $$P_{g,n+1}(x,y,L_K)=v^{\Theta}_{g-1,n+1}(x,y,L_K)+\sum_{\substack{g_1+g_2=g\\[3pt]I\sqcup J=K}}v^{\Theta}_{g_1,|I|+1}(x,L_I)v^{\Theta}_{g_2,|J|+1}(y,L_J).$$
\end{theorem}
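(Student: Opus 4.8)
The plan is to prove Theorem A.3 by establishing that the functional recursion \eqref{Volumerecursion} is equivalent to the intersection-number recursion \eqref{Recursionkappsi}, which is the $\mathbf{b}=(a,0,0,\dots)$ specialization of Theorem \ref{RecursionB} (with $\alpha_l=l!\beta_l$ as recorded there). By \eqref{Volumedef}, each $v^{\Theta}_{g,n}(L_1,\dots,L_n)$ is a polynomial in $L_1^2,\dots,L_n^2$ whose coefficient of $\prod_i L_i^{2d_i}/\prod_i d_i!$ is $\langle\kappa_{g-1-\sum d_i}\prod_i\tau_{d_i}\rangle^{\Theta}_g$, where $\kappa_{d_0}$ abbreviates $\kappa_1^{d_0}$. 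First I would substitute this expansion into both sides of \eqref{Volumerecursion} and compare the coefficient of each monomial $\prod_{i=1}^n L_i^{2d_i}$, turning the functional identity into a numerical one among intersection numbers. The uniqueness assertion is then immediate, since the right-hand side of \eqref{Volumerecursion} involves only volumes of strictly smaller $2g-2+n$, terminating at $v^{\Theta}_{1,1}=\tfrac18$.

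The crux is to evaluate the kernel integrals so that the numerical identity is exactly \eqref{Recursionkappsi}. I would first establish the basic moment transform
\[
\int_0^\infty x^{2m+1}H(x,t)\,dx,
\]
which is a polynomial in $t$ of even degree (vanishing at $t=0$ since $H(x,0)=0$) whose coefficients are the numbers $\beta_b$ of \eqref{beta_b} times explicit double factorials; I expect the generating function of these moments to reproduce the expansion $1/\cos(\sqrt2\,x)=\sum_b\beta_b x^{2b}$, which is the source of the coefficients $\beta_b$. This is proved by expanding $1/\cosh$ and computing the resulting moments by residues. From this single-variable transform, together with the symmetry $H(x,t)=H(t,x)$, one obtains the two-variable transform of $x^{2r+1}y^{2s+1}$ against $D(L_1,x,y)=H(L_1,x+y)$, and the transform of $x^{2m+1}$ against $R(L_1,L_j,x)=\tfrac12\bigl(H(L_1+L_j,x)+H(L_1-L_j,x)\bigr)$; the latter, after a binomial expansion of $(L_1\pm L_j)^{2r}$, produces the factor $\tfrac{(2b+2d_1+2d_j+1)!!}{(2d_j-1)!!}$.

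Carrying out these integrals term by term, the $R$-sum over $j=2,\dots,n$ in \eqref{Volumerecursion} matches the first sum of \eqref{Recursionkappsi}, while the $D$-integral against $P_{g,n+1}$ splits into the non-separating contribution from $v^{\Theta}_{g-1,n+1}$, giving the $g-1$ term, and the separating contribution $\sum_{g_1+g_2=g,\,I\sqcup J=K}v^{\Theta}_{g_1}v^{\Theta}_{g_2}$, giving the final product term, each decorated with the same $\beta_b$ and double-factorial weights. Matching coefficients then yields \eqref{Recursionkappsi} precisely. The main obstacle will be the exact evaluation of the $H$-moments and the bookkeeping of the double factorials and of the $\tfrac{a!}{(a-b)!}$ factors: one must verify that the residue computation produces the generating function $1/\cos(\sqrt2\,x)$ with exactly the normalization that converts the analytic recursion into the combinatorial one, and that the shifts in the summation indices (such as $r+s=b+d_1-1$) line up consistently across all three terms.
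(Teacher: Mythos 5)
The paper does not actually prove this statement: it is imported from \cite{stanford2020jt,norbury2023new}, where it is established by supergeometric methods (a super McShane--Mirzakhani identity integrated over moduli space, resp.\ topological recursion on a spectral curve). What the appendix proves is only the \emph{equivalence} of \eqref{Volumerecursion} with the intersection-number recursion \eqref{Recursionkappsi}, in the direction needed to extract \eqref{Recursionkappsi}. Your proposal runs that equivalence in the opposite direction and then invokes \eqref{Recursionkappsi} as an independently established fact; this is a genuinely different, and logically sound, route. It is non-circular because \eqref{Recursionkappsi} is the $\mathbf{b}=(a,0,\dots)$ case of Theorem \ref{RecursionB}, whose proof rests on \eqref{Recursionpsi} and hence on the Virasoro constraints for the Br\'ezin--Gross--Witten tau function (Theorem \ref{thmNorbury} plays no role there), not on \eqref{Volumerecursion}. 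The computational core you describe --- the moments $\int_0^\infty x^{2m+1}H(x,t)\,dx$ producing the coefficients of $1/\cos$, the reduction of the double $D$-integral to a single moment via $z=x+y$, and the binomial expansion of $(L_1\pm L_j)^{2r}$ for the $R$-term --- is exactly the content of the paper's Lemma A.2 and the proof of its equivalence theorem, so the bookkeeping you flag as the main obstacle is carried out there. What your route buys is a proof that avoids supergeometry entirely; what it costs is reliance on the deep external input $Z^{\Theta}=Z^{BGW}$.

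Three points need attention. First, the identification $\alpha_l=l!\beta_l$ (so that the $\mathbf{b}=(a,0,\dots)$ case of Theorem \ref{RecursionB} really is \eqref{Recursionkappsi}) is itself a small generating-function computation --- one must check that the reciprocal of $\sum_l(-1)^l x^l/(l!(2l-1)!!)$ matches the expansion \eqref{beta_b}; the paper asserts but does not verify this. Second, the moment polynomial $h_{2k+1}(t)$ is odd in $t$ (it contains only the powers $t^{2i+1}$), not ``of even degree'' as you write; the parity matters because the recursion is normalized by the factor $L_1$ on the left. Third, for uniqueness you should note that the $(g,n)=(1,1)$ case must be excluded from the recursion (its right-hand side vanishes since $\Theta_{0,n}=0$), which is precisely why $v^{\Theta}_{1,1}=\tfrac18$ is supplied as separate initial data; with that, induction on $2g-2+n$ goes through as you say.
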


Denote $$h_{2k+1}(t):=\int_{0}^{\infty}\frac{x^{2k+1}}{(2k+1)!!}H(t,x)dx.$$
Then \begin{align*}&\int_{0}^{\infty}\int_{0}^{\infty}\frac{x^{2k+1}}{(2k+1)!!}\frac{y^{2j+1}}{(2j+1)!!}H(t,x+y)dxdy\\&\xlongequal{z=x+y}\int_{0}^{\infty}\int_{0}^{z-x}\frac{x^{2k+1}}{(2k+1)!!}\frac{(z-x)^{2j+1}}{(2j+1)!!}H(t,z)dxdz\\&=\int_{0}^{\infty}\frac{z^{2k+2j+3}}{(2k+2j+3)!!}H(t,z)dz=h_{2k+2j+1}(t).\end{align*}
\begin{lemma}\textup{\cite[Lemma 5.2]{norbury2020enumerative}}
$\displaystyle h_{2k+1}(t)=\sum\limits_{i=0}\limits^{k}\frac{a_{k-i}}{(2k-2i)!}\frac{t^{2i+1}}{(2i+1)!},$ where $a_n$ is determined by $$\frac{1}{\mathrm{cos}x}=\sum\limits_{n=0}\limits^{\infty}a_n\frac{x^{2n}}{(2n)!},$$ with $a_0=1.$
\end{lemma}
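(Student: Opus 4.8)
The plan is to encode the whole family $\{h_{2k+1}(t)\}_{k\ge0}$ in one auxiliary variable $w$ and to recognize the asserted right-hand side as the Taylor coefficients of an elementary function. By the defining relation of the numbers $a_n$ one has $\sum_{n\ge0}\frac{a_n}{(2n)!}w^{2n}=\frac{1}{\cos w}$, while $\sum_{m\ge0}\frac{t^{2m+1}}{(2m+1)!}w^{2m+1}=\sinh(tw)$. Forming the Cauchy product and reading off the coefficient of $w^{2k+1}$ shows that the claimed formula for $h_{2k+1}(t)$ is equivalent to the single generating-function identity
\begin{equation}\label{genfun}
\sum_{k\ge0}h_{2k+1}(t)\,w^{2k+1}=\frac{\sinh(tw)}{\cos w},
\end{equation}
understood as an identity of power series in $w$, convergent for $|w|<\pi/2$. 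Thus it suffices to prove \eqref{genfun}.

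First I would evaluate the left-hand side of \eqref{genfun} as a single integral. Summing the defining integrals of $h_{2k+1}$ against $w^{2k+1}$ and using $\sum_{k\ge0}\frac{(wx)^{2k+1}}{(2k+1)!}=\sinh(wx)$ converts the series into $\int_0^\infty\sinh(wx)\,H(t,x)\,dx$; the interchange of summation and integration is legitimate for $0<w<\pi/2$ because $H(t,x)$ decays like $e^{-\pi x/2}$ and dominates the growth of $\sinh(wx)$. The problem is therefore reduced to evaluating this one integral in closed form.

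The core computation uses the parity of the kernel. Since $H(t,x)$ is odd in $x$, the integrand $\sinh(wx)H(t,x)$ is even, so $\int_0^\infty=\tfrac12\int_{-\infty}^\infty$. Writing $H$ as the difference of the two hyperbolic secants and substituting $y=x-t$ and $y=x+t$ in the respective terms, the addition formula $\sinh(w(y\pm t))=\sinh(wy)\cosh(wt)\pm\cosh(wy)\sinh(wt)$ makes the parts odd in $y$ integrate to zero and leaves
\[
\int_0^\infty\sinh(wx)\,H(t,x)\,dx=\sinh(tw)\cdot\frac12\int_{-\infty}^\infty\frac{\cosh(wy)}{\cosh\frac{\pi y}{2}}\,dy.
\]
Everything then hinges on the classical integral $\int_{-\infty}^\infty\frac{\cosh(wy)}{\cosh(\pi y/2)}\,dy=\frac{2}{\cos w}$, which I would obtain by contour integration: the simple poles of $1/\cosh(\pi y/2)$ sit at $y=i(2n+1)$ with residues $\tfrac{2(-1)^n}{i\pi}$, and summing the resulting geometric series gives $2/\cosh a$ for $\int\frac{e^{iay}}{\cosh(\pi y/2)}\,dy$, which after the substitution $a\mapsto iw$ becomes $2/\cos w$. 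Substituting back produces exactly \eqref{genfun}, and matching the coefficient of $w^{2k+1}$ finishes the proof.

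The hard part will not be any single algebraic step but the analytic bookkeeping: verifying convergence of the series and of all the integrals on the strip $|w|<\pi/2$, justifying the term-by-term interchange (equivalently, differentiation under the integral sign to recover the individual moments), and controlling the contour argument for the secant integral so that the large arcs contribute nothing. Once the closed form $2/\cos w$ is secured, the remainder is a routine extraction of Taylor coefficients and needs no input beyond the definition of $a_n$.
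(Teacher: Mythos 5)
Your proof is correct. It is the same computation as the paper's at its core --- shift the integration variable to isolate $\operatorname{sech}\frac{\pi y}{2}$, use the oddness of $H(t,\cdot)$ to kill half the terms, and reduce everything to the classical secant integral --- but you package it differently: you sum over $k$ first and prove the single generating-function identity $\sum_k h_{2k+1}(t)w^{2k+1}=\sinh(tw)/\cos w$, evaluating $\int_{-\infty}^{\infty}\cosh(wy)\operatorname{sech}(\pi y/2)\,dy=2/\cos w$ by residues, whereas the paper works coefficient-by-coefficient, expands $(x+t)^{2k+1}-(x-t)^{2k+1}$ binomially, and cites ``the property of Laplace transformation'' for the moment formula $\int_0^\infty x^{2n}\operatorname{sech}(\pi x/2)\,dx=a_n$. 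The two key inputs are equivalent (the moments are exactly the Taylor coefficients of $2/\cos w$), but your route is more self-contained since you actually derive the classical integral rather than quoting it, at the cost of the extra convergence bookkeeping on the strip $|w|<\pi/2$ that the coefficientwise argument avoids. One point worth making explicit: as literally printed, the paper defines $h_{2k+1}(t)$ with $(2k+1)!!$ in the denominator, under which the stated formula is false; both your argument and the paper's own proof silently use the normalization $x^{2k+1}/(2k+1)!$, which is the one for which the lemma holds, so you have implicitly (and correctly) adopted the intended definition.
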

\begin{proof}\begin{align*}h_{2k+1}(t)&=\int_{0}^{\infty}\frac{x^{2k+1}}{(2k+1)!!}H(t,x)dx\\&=\int_{0}^{\infty}\frac{x^{2k+1}}{(2k+1)!!}H(x,t)dx\\&=\int_{0}^{\infty}\frac{x^{2k+1}}{(2k+1)!!}\cdot\frac{1}{2}\left(\frac{1}{cosh\frac{\pi(x-y)}{2}}-\frac{1}{cosh\frac{\pi(x+y)}{2}}\right)dx\\&=\frac{1}{2}\times\frac{1}{(2k+1)!}\left[\int_{-t}^{\infty}\frac{(x+t)^{2k+1}}{cosh\frac{\pi x}{2}}dx-\int_{t}^{\infty}\frac{(x-t)^{2k+1}}{cosh\frac{\pi x}{2}}dx\right]\\&=\frac{1}{2}\times\frac{1}{(2k+1)!}\bigg[\int_{0}^{\infty}\frac{(x+t)^{2k+1}-(x-t)^{2k+1}}{cosh\frac{\pi x}{2}}dx\\&\qquad+\int_{-t}^{0}\frac{(x+t)^{2k+1}}{cosh\frac{\pi x}{2}}dx+\int_{-t}^{0}\frac{(x-t)^{2k+1}}{cosh\frac{\pi x}{2}}dx\bigg]\\&=\frac{1}{2}\times\frac{1}{(2k+1)!}\int_{0}^{\infty}\frac{(x+t)^{2k+1}-(x-t)^{2k+1}}{cosh\frac{\pi x}{2}}dx\\&=\frac{1}{(2k+1)!}\sum\limits_{i=0}\limits^{k}\binom{2k+1}{2i+1}t^{2i+1}\int_{0}^{\infty}\frac{x^{2k-2i}}{cosh\frac{\pi x}{2}}dx\\&=\sum\limits_{i=0}\limits^{k}\frac{a_{k-i}}{(2k-2i)!}\frac{t^{2i+1}}{(2i+1)!}.\end{align*}The last equality holds by the property of Laplace transformation.\end{proof}

\begin{theorem}\textup{\cite[Theorem 4]{norbury2020enumerative}}
The recursion formula for Weil-Petersson volume \eqref{Volumerecursion} is equivalent to \eqref{Recursionkappsi}
\end{theorem}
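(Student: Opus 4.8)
The plan is to prove the equivalence by a single operation: expand both sides of the volume recursion \eqref{Volumerecursion} into polynomials in $(L_1,\dots,L_n)$ using the explicit expansion \eqref{Volumedef}, and then read off the coefficient of a fixed monomial $L_1^{2d_1+1}\prod_{i=2}^{n}L_i^{2d_i}$. Since \eqref{Volumerecursion} is an identity of polynomials, it holds if and only if all such coefficients agree, and because \eqref{Volumedef} shows that the intersection numbers and the volume polynomials determine one another, matching coefficients yields a genuine equivalence rather than a one-way implication. Writing $a:=g-1-\sum_{i=1}^{n}d_i$ for the forced power of $\kappa_1$, the coefficient of the chosen monomial in $L_1 v^{\Theta}_{g,n}$ is $\frac{1}{a!\,\prod_{i=1}^{n}d_i!}\langle\kappa_1^{a}\prod_{i=1}^{n}\tau_{d_i}\rangle^{\Theta}_g$; clearing the common denominator and multiplying through by $(2d_1+1)!!$ turns this into the left-hand side of \eqref{Recursionkappsi}.

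For the right-hand side I would treat the two integral terms separately. In the $D$-term I substitute the expansion of $P_{g,n+1}(x,y,L_K)$ and integrate each monomial $x^{2r+1}y^{2s+1}\prod_{i\in K}L_i^{2d_i}$ against $H(L_1,x+y)$. The convolution computation preceding Lemma A.2, combined with Lemma A.2 itself, evaluates $\int_0^\infty\!\!\int_0^\infty x^{2r+1}y^{2s+1}H(L_1,x+y)\,dx\,dy$ as a constant multiple of $h_{2(r+s)+3}(L_1)$, whose coefficient of $L_1^{2d_1+1}$ carries a secant number $a_b$ with $b=r+s-d_1+1$. Splitting $P_{g,n+1}$ into its non-separating part $v^{\Theta}_{g-1,n+1}$ and its separating part $\sum v^{\Theta}_{g_1}v^{\Theta}_{g_2}$ reproduces, respectively, the genus $g-1$ term and the boundary-splitting term of \eqref{Recursionkappsi}; in both the dimension constraint on the $\Theta$-integrals forces $r+s=b+d_1-1$, which is exactly the summation range appearing there, while the product term distributes the $\kappa_1$-power $a-b$ over the two factors.

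In the $R$-term I would exploit the symmetrization $R(L_1,L_j,x)=\tfrac12\bigl(H(L_1+L_j,x)+H(L_1-L_j,x)\bigr)$: integrating $x^{2k+1}$ against it gives $\tfrac12(2k+1)!!\bigl(h_{2k+1}(L_1+L_j)+h_{2k+1}(L_1-L_j)\bigr)$, and extracting the coefficient of $L_1^{2d_1+1}L_j^{2d_j}$ selects $k=b+d_1+d_j$, producing the insertion $\tau_{b+d_1+d_j}$ together with $a_b$ and the residual $\kappa_1$-power $a-b$, which matches the $j$-sum of \eqref{Recursionkappsi}. The final bookkeeping step is then purely arithmetic: convert every secant number arising from Lemma A.2 into the constant $\beta_b$ of \eqref{beta_b} via $\frac{a_b}{(2b)!}=\frac{\beta_b}{2^{b}}$ (equivalently, $\tfrac{1}{\cos\sqrt2\,x}$ versus $\tfrac{1}{\cos x}$), and collapse the factorials, double factorials and Beta-type factors so that each of the four families of terms matches \eqref{Recursionkappsi} term by term.

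The hardest part will be this factor tracking. The convolution in the $D$-term raises the index of $h$ by two, and this shift is precisely what creates the $-1$ in the range $r+s=b+d_1-1$ and what makes the secant coefficient $a_b$ rather than $a_{b-1}$; getting this index right, together with the Beta normalization $\int_0^z x^{2r+1}(z-x)^{2s+1}\,dx=\frac{(2r+1)!\,(2s+1)!}{(2r+2s+3)!}\,z^{2r+2s+3}$, is where an error is most likely. I would also have to check carefully that the forced $\kappa_1$-power on every right-hand contribution is indeed $a-b$ and that the $\psi$-indices are shifted by exactly $b$, so that the dimension constraints align across all terms. Once these normalizations are pinned down, each reduction is reversible, so that \eqref{Recursionkappsi} in turn reconstructs \eqref{Volumerecursion}, completing the equivalence.
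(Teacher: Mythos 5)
Your proposal follows essentially the same route as the paper's own proof: expand both sides of \eqref{Volumerecursion} via \eqref{Volumedef} and match coefficients of $L_1^{2d_1+1}\prod_{i\ge 2}L_i^{2d_i}$, evaluating the $D$-term through the Beta-type convolution and Lemma A.2 and the $R$-term through the symmetrization of $H(L_1\pm L_j,x)$, then converting the secant numbers $a_b$ to $\beta_b$ via $a_b/(2b)!=\beta_b/2^b$. The index shifts you identify ($r+s=b+d_1-1$, $k=b+d_1+d_j$) and the reversibility argument are exactly those underlying the paper's computation, so the plan is sound.
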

\begin{proof}
We prove $\eqref{Volumerecursion}\Leftrightarrow\eqref{Recursionkappsi}$ by comparing the coefficients of \eqref{Volumerecursion}. By \eqref{Volumedef}, we obtained the following from \eqref{Volumerecursion}
$$\frac{\partial^{2k_1}}{L_1^{2k_1}}\cdots\frac{\partial^{2k_n}}{L_1^{2k_n}}\bigg(v^{\Theta}_{g,n}(L_1,...,L_n)\bigg)|_{L_{i> 0}=0}=\frac{\prod\limits_{i=1}\limits^{n}(2k_i)!}{\prod\limits_{i=0}\limits^{n}k_i!}\langle \kappa_1^{k_0}\prod\limits_{i=1}\limits^{n}\tau_{k_i}\rangle_g^{\Theta},$$

\begin{align*}&\frac{\partial^{2k_1}}{L_1^{2k_1}}\cdots\frac{\partial^{2k_n}}{L_1^{2k_n}}\bigg(\frac{1}{L_1}RHS_1\bigg)|_{L_{i> 0}=0}\\&=\sum\limits_{d_1+d_2+d_0=k_0+k_1-1}\prod\limits_{i=2}\limits^{n}\frac{(2k_i)!}{k_i!}\frac{(2d_1+1)!(2d_2+1)!}{d_0!d_1!d_2!}\times\langle \kappa_1^{d_0}\tau_{d_1}\tau_{d_2}\prod\limits_{i=2}\limits^{n}\tau_{k_i}\rangle_{g-1}^{\Theta}\\&\qquad\times \frac{\partial^{2k_1}}{L_1^{2k_1}}\bigg(\frac{h_{2d_1+2d_2+3}(L_1)}{L_1}\bigg)|_{L_1=0}\\& \ \ +\sum_{\substack{g_1+g_2=g\\[3pt]I\sqcup J=K}}\sum_{\substack{d_0+d_1=g_1-1\\[3pt]d_0^{'}+d_1^{'}=g_2-1}}\prod\limits_{i=2}\limits^{n}\frac{(2k_i)!}{k_i!}\frac{(2d_1+1)!(2d_2+1)!}{d_0!d_1!d_0^{'}!d_1^{'}!} \langle \kappa_1^{d_0}\tau_{d_1}\tau_{k(I)}\rangle_{g_1}^{\Theta}\\&\qquad\times\langle \kappa_1^{d_0^{'}}\tau_{d_1^{'}}\tau_{k(I)}\rangle_{g_2}^{\Theta}\times \frac{\partial^{2k_1}}{L_1^{2k_1}}\bigg(\frac{h_{2d_1+2d_1^{'}+3}(L_1)}{L_1}\bigg)|_{L_1=0},\end{align*}
and
\begin{align*}
&\frac{\partial^{2k_1}}{L_1^{2k_1}}\cdots\frac{\partial^{2k_n}}{L_1^{2k_n}}\bigg(\frac{1}{L_1}RHS_2\bigg)|_{L_{i> 0}=0}\\&=\sum\limits_{j=2}\limits^{n}\sum\limits_{d_0+d_1=k_0+k_1+k_j}\prod\limits_{i\neq 1,j}\frac{(2k_i)!}{k_i!}
\frac{(2d_1+1)!}{d_0!d_1!}\langle \kappa_1^{d_0}\tau_{d_1}\tau_{k(K\backslash \{j\})}\rangle_{g}^{\Theta}\\&\ \times\frac{\partial^{2k_1}}{L_1^{2k_1}}\frac{\partial^{2k_j}}{L_j^{2k_j}}\bigg(\frac{1}{L_1}\int_{0}^{\infty}\frac{x^{2d_1+1}}{(2d_1+1)!}\frac{1}{2}\Big(H(x,L_1+L_j)+H(x,L_1-L_j)\Big)dx\bigg)|_{L_1=0,L_j=0}.\end{align*}
Note that
$$\frac{1}{2}\Big((L_j+L_1)^{2k+1}+(L_1-L_j)^{2k+1}\Big)=L_1\sum\limits_{m}\binom{2k+1}{2m+1}L_1^{2m}L_j^{2(k-m)}.$$
Then by Lemma A.2 and setting $\beta_{\mathbf{L}}=\beta_{(l,0,0,...)}=\beta_l=\frac{2^l}{(2l)!}a_l$ in Lemma 3.1, it follows by the identities above.
\end{proof}

$$ \ \ \ \ $$

\end{document}